\author{David Forsman\\
Université catholique de Louvain\\
\texttt{david.forsman@uclouvain.be}}
\title{On the Multicategorical Meta-Theorem and\\
the Completeness of Restricted Algebraic Deduction Systems}
\date{\today}
  \newtheorem*{theorema}{Theorem}
\theoremstyle{plain}
\newtheorem{theorem}{Theorem}[section]
\newtheorem{lemma}[theorem]{Lemma}
\newtheorem{corollary}[theorem]{Corollary}
\theoremstyle{definition}
\newtheorem{definition}[theorem]{Definition}
\newtheorem{example}[theorem]{Example}
\numberwithin{equation}{section} 
\newcommand{\N}{\mathbb{N}}
\newcommand{\ra}{\rightarrow}
\begin{document}

\maketitle
\begin{abstract}\noindent
Eight categorical soundness and completeness theorems are established within the framework of algebraic theories. Exactly six of the eight deduction systems exhibit complete semantics within the cartesian monoidal category of sets. The multicategorical meta-theorem via soundness and completeness enables the transference of properties of families of models from the cartesian monoidal category of sets to $\Delta$-multicategories $C$.

A bijective correspondence $R \mapsto \Delta_R$ is made between context structures $R$ and structure categories $\Delta$, which are wide subcategories of $\textbf{FinOrd}$ consisting of finite ordinals and functions. Given a multisorted signature $\sigma$ with a context structure $R$, an equational deduction system $\vdash_R$ is constructed for $R$-theories. The models within $\Delta_R$-multicategories provide a natural semantic framework for the deduction system $\vdash_R$ for modelable context structures $R$. Each of the eight modelable context structures $R$ is linked with a soundness and completeness theorem for the deduction system $\vdash_R$.
\end{abstract}

\tableofcontents
\section{Introduction}
In logic there is a blurred distinction between syntax and semantics. Often one has in mind a structure or a family of structures about which one would like to know more. Mathematical logic studies this phenomenon through the concepts of language and the structures to which language refers, called syntax and semantics respectively. Specifically, syntax consists of an alphabet, propositions, and the associated deduction system for propositions. Traditionally, semantics refers to the models consisting of sets, functions and relations compatible with a given syntax. Soundness is the property that the deduction rules of the syntax are compatible with the associated models. Completeness then describes the maximality of the deduction system in the sense that every property of all models that can be expressed in the language is obtained by deduction.

Soundness and completeness theorems allow us to study the class of models of a theory by studying only the syntax of the theory. There is a meaningful consequence of the soundness and completeness of a given syntax. Namely, if the soundness of a deduction system holds for some collection $A$ of models, and the completeness of the deduction holds for some family $B$ of models, then one can transfer properties from the class $B$ to the class $A$. This creates a path for results to pass from $B$ to $A$. We call this notion a meta-theorem.

The Eckmann-Hilton argument establishes a condition under which two unital magma structures on the same set form a single commutative monoid. This argument extends from the monoidal category of sets to any symmetric monoidal category. An intriguing question arises: does this generalization follow simply because the claim holds in the context of sets? The answer is affirmative. For any equational linear theory $E\cup\{\phi\}$ we demonstrate $E\vDash_{\textbf{Set}}\phi$ implies $E\vDash_C \phi$ for all symmetric multicategories $C$. This result is further generalized to $\Delta$–multicategories across six distinct structure categories $\Delta$.

We introduce four main concepts in this paper:
\begin{multicols}{2}
\begin{enumerate}
    \item \textbf{Context Structure} $R$ dictates when a sequence of variables $c$ forms a context for another sequence $v$ via the relation $cRv$.
    \item \textbf{Equational Deduction System} $\vdash_R$ defined by an algebraic signature $\sigma$ equipped with a context structure $R$.
    \item \textbf{Bijection} $R\mapsto \Delta_R$ maps context structures $R$ to \textbf{structure categories} $\Delta$, where a structure category is a suitable wide subcategory of $\textbf{FinOrd}$ encompassing finite ordinals and functions.
    \item $\Delta$-\textbf{multicategory} $C$ is endowed with a specified $\Delta$–action on its hom-sets.
\end{enumerate}
\end{multicols}
While there are infinitely many context structures $R$, our focus centers on eight specific structures called modelable context structures. These structures enable $R$-theories to have models within $\Delta_R$-multicategories. We establish the soundness and completeness of $\vdash_R$ for each modelable $R$ with respect to categorical semantics. Notably, six out of these eight modelable context structures achieve complete semantics just in sets. The Multicategorical Meta-Theorem emerges as a corollary:

\begin{theorema}[Multicategorical Meta-Theorem]
Let $\sigma$ be a signature with a cartesian or balanced and modelable context structure $R$. Let $E\cup\{\phi\}$ be an $R$-theory. Let $C$ be a $\Delta_R$-multicategory.  Then
$$
E\vDash_\textbf{Set}\phi \text{ implies }E\vDash_C \phi.
$$
\end{theorema}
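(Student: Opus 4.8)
The plan is to derive the Multicategorical Meta-Theorem as a corollary of the soundness and completeness theorems that the paper establishes for each modelable context structure $R$. The statement to prove is a semantic implication, $E\vDash_\textbf{Set}\phi \Rightarrow E\vDash_C\phi$, so the natural strategy is to factor it through syntax: use completeness over $\textbf{Set}$ to convert the semantic hypothesis into a syntactic derivation $E\vdash_R\phi$, and then use soundness over $C$ to convert that derivation back into the desired semantic conclusion. The whole argument then reads $E\vDash_\textbf{Set}\phi \Rightarrow E\vdash_R\phi \Rightarrow E\vDash_C\phi$, the first arrow being completeness and the second being soundness.

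First I would fix a signature $\sigma$ with a context structure $R$ that is modelable and either cartesian or balanced, and fix an $R$-theory $E\cup\{\phi\}$ together with a $\Delta_R$-multicategory $C$. The hypothesis $E\vDash_\textbf{Set}\phi$ says that $\phi$ holds in every model of $E$ inside $\textbf{Set}$ (viewed, per the paper's setup, as the relevant cartesian monoidal category of sets, i.e.\ a particular $\Delta_R$-multicategory). I would then invoke the completeness theorem attached to $R$: since $R$ is among the six modelable context structures whose semantics is complete in sets — and the hypotheses of the meta-theorem (cartesian or balanced) are exactly those that pick out such $R$ — completeness yields the syntactic consequence $E\vdash_R\phi$. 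This is the step that uses the specific classification result of the paper, namely that these particular $R$ enjoy completeness with respect to $\textbf{Set}$ rather than only with respect to the full $\Delta_R$-multicategorical semantics.

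Next I would apply the soundness theorem for $\vdash_R$ with respect to $\Delta_R$-multicategorical semantics. Soundness holds for every modelable $R$ (it is the safe direction and is not restricted to the six complete cases), so from $E\vdash_R\phi$ I obtain that $\phi$ is valid in every model of $E$ inside any $\Delta_R$-multicategory, and in particular inside the chosen $C$. This gives $E\vDash_C\phi$, closing the chain. The only real content beyond bookkeeping is matching the two theorems at their shared interface: one must check that the notion of model and of validity used by the completeness theorem (over $\textbf{Set}$) and by the soundness theorem (over $\Delta_R$-multicategories) refer to the same derivability relation $\vdash_R$ and the same $R$-theory $E\cup\{\phi\}$, so that the syntactic object produced by the first arrow is exactly the one consumed by the second.

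The step I expect to be the main obstacle is verifying that the cartesian-or-balanced hypothesis genuinely places $R$ within the class for which completeness over $\textbf{Set}$ has been proved. The paper notes that only six of the eight modelable context structures have complete semantics in sets, so the delicate point is confirming that ``cartesian or balanced'' is precisely the side condition isolating those six (equivalently, that it excludes the two exceptional structures for which $\textbf{Set}$-completeness fails). Once that identification is made, the proof is a formal concatenation of completeness and soundness and requires no further computation; the conceptual weight has already been discharged in establishing those two theorems for each modelable $R$.
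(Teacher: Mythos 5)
Your proposal is correct and follows exactly the paper's own proof: the \textbf{Set}-Completeness Theorem (whose hypotheses are verbatim ``balanced or cartesian'' among modelable $R$, so the side condition matches with no further checking needed) gives $E\vdash_R\phi$, and the Soundness Theorem for $\Delta_R$-multicategories then gives $E\vDash_C\phi$. Nothing in your argument deviates from the paper's two-line concatenation of these results.
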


The remainder of this paper is structured as follows: In Section 2, we define the notion of a structure category $\Delta$ as a wide subcategory of $\textbf{FinOrd}$ consisting of functions between finite ordinals. A structure category $\Delta$ enables the definition of $\Delta$–multicategories through its action on the homsets of a multicategory. Section 3 introduces the basic concepts in universal algebra, divided into subsections covering syntax, soundness, and completeness within the context of sets. Syntax encompasses signatures equipped with a context structure $R$, $R$-terms, and $R$-theories. We establish a bijective correspondence $R \mapsto \Delta_R$ between context structures and structure categories, identifying precisely eight modelable context structures $R$. The soundness subsection addresses the definition of models in $\Delta_R$-multicategories for $R$-theories, demonstrating soundness across all models in $\Delta_R$ categories. In the completeness subsection, we establish that six out of the eight deduction systems $\Delta_R$ exhibit complete semantics within the cartesian multicategory of sets, culminating in the Multicategorical Meta-Theorem, a central result of this paper.

Section 4 is dedicated to constructing the universal model for a $\sigma, R$-theory $E$ and proving the categorical completeness of $R$-deduction for a modelable $R$. We construct the universal $E, R$-model by developing a linear $\Sigma$-theory $\overline{E}$ that conservatively extends $E$. The initial $\overline{E}$-model serves as the universal model, providing a free multicategory equipped with a model for $E$. This construction is closely related to the functorial semantics proposed by Lawvere \cite{FunctorialSemantics}. A natural transformation $\eta \colon F \Rightarrow G \colon M \rightarrow N$ in the $2$-category of $\Delta$-multicategories can be interpreted as a homomorphism from the model $F$ to $G$ of a theory $M$ in the $\Delta$-multicategory $N$.

Section 5 concludes the paper by summarizing the main findings.

\section{Multicategories}
Structures like cartesian and symmetric multicategories have been extensively studied, as discussed in works such as \cite{Leinster_2004} and \cite{gould2010coherence}. We introduce $\Delta$–multicategories, a generalization encompassing multicategories, symmetric multicategories, and cartesian multicategories within a unified framework. Initially, we define structure categories $\Delta$ as wide subcategories of $\textbf{FinOrd}$ comprising finite ordinals and functions. We proceed to define the action of $\Delta$ on the homsets of a multicategory, leading to the formal definition of $\Delta$–multicategories. Special cases include instances where $\Delta$ comprises all functions, bijections, or identities, corresponding respectively to cartesian multicategories, symmetric multicategories, and multicategories. This sets the stage for a bijective correspondence between structure categories and context structures defined later.

\subsection{Structure Categories}
A structure category is a wide subcategory of $\textbf{FinOrd}$ of finite ordinal closed under coproduct of functions and components of similarities. We give infinitely many examples of structure categories using the notion of a structure monoid.
\begin{definition}[Finite Ordinals]
    We define the category $\textbf{FinOrd}$ of finite ordinals and functions as follows:
    \begin{itemize}
        \item Objects are sets $[n]\coloneqq \{m\in\N\mid 0<m\leq n\}$ for $n\in\N$, called finite ordinals.
        \item Morphism $[m]\rightarrow [n]$ is a function $[m]\rightarrow [n]$. The composition is the usual function composition.
    \end{itemize}
    We define the $k$-ary coproduct functor $+_k\colon \textbf{FinOrd}^k\rightarrow \textbf{FinOrd}$.
        \item We define $+_k([m_1],\ldots, [m_k]) = [m_1+\ldots+ m_k]$ for $m_1,\ldots, m_k\in\N$ and for functions $f_i\colon [m_i]\rightarrow [n_i], 0<i\leq k,$ between finite ordinals we set
        $$
        +_n(f_1,\ldots, f_k) = f_1+\ldots + f_k\colon [M_k]\rightarrow [N_k],M_{i-1}+x\mapsto N_{i-1}+f_i(x)
        $$
        for $0< x \leq m_i$ and $i\leq k$. Here $M_i = \Sigma_{j\leq i} m_i$ and $N_i = \Sigma_{j\leq i} n_i$ for $i\leq k$. 
\end{definition}
\begin{definition}[Similarity of a function]
    Let $\theta\colon [m]\rightarrow [n]$ be a function between finite ordinals. For a category $C$, we set $\theta^*\colon C^n\rightarrow C^m$, where $(x_1,\ldots, x_n)\mapsto (x_{\theta(1)},\ldots, x_{\theta(m)})$. We define a natural transformation $\theta'\colon +_m\circ\theta^*\Rightarrow +_n\colon \textbf{FinOrd}^n\rightarrow \textbf{FinOrd}$ by setting
    $$
    \theta'_{k_1,\ldots, k_m}\colon [L_m]\rightarrow [K_n]\text{, where } L_{i-1} + x\mapsto K_{\theta(i) - 1} + x
    $$
    for $k_1,\ldots, k_n\in\N$. Here we denote $L_i = k_{\theta(1)}+\ldots + k_{\theta(i)}$ and $K_j = k_1+\ldots + k_j$ for $i\leq m$ and $j\leq n$. In addition, we call the natural transformation $\theta'$ the similarity natural transformation of $\theta$. The function $\theta$ is recovered back via $\theta'_{1,\ldots, 1} = \theta$.
\end{definition}
\begin{definition}[Structure category]
    A wide (containing all objects) subcategory $\Delta$ of $\textbf{FinOrd}$ of finite ordinals is said to be a structure category if the following holds:
    \begin{itemize}
        \item  $\Delta$ is closed under the coproduct of morphisms.
        
        \item If a function $\theta\colon [m]\rightarrow [n]$ is in $\Delta$, then the component $\theta'_{k_1,\ldots, k_n}\colon [k_{\theta(1)}+\ldots + k_{\theta(m)}]\rightarrow [k_1+\ldots +k_n]$ of the similarity of $\theta$ is in $\Delta$ for $k_1,\ldots, k_n\in\N$. 
    \end{itemize}
    \end{definition}
    
    There are infinitely many structure categories. Certain submonoids of natural numbers with multiplicative structure allow us to define structure categories.
    \begin{definition}[Structure monoid]
        Let $I\subset\N$. We call $I$ a structure monoid, if the two conditions hold:
        \begin{itemize}
            \item $1\in I$.
            \item If $k,a_1,\ldots, a_k\in I$, then $a_1+\ldots + a_k\in I$. 
        \end{itemize} 
        Each structure monoid $I$ determines structure categories $\Delta_I,\Delta^I$ and when $0\not\in I$ structure categories $\Psi_I$ and $\Psi^I$:
        \begin{itemize}
            \item $\Delta^I$ consists of those functions $f\colon [m]\rightarrow [n]$ between finite ordinals where the cardinality of the fiber $f^{-1}\{i\}$ is in $I$ for each $i\in[n]$.
            \item $\Delta_I$ is the structure category generated by the functions $[n]\rightarrow [1],n\in I$.
            \item Assume $0\not\in I$. We set $\Psi_I$ to consist of those functions $f\colon[m]\rightarrow [n]$ in $\Delta^I$, where $\min(f^{-1}\{i\})\leq \min(f^{-1}\{j\})$ for all $i\leq j\leq n$. 
            \item Symmetrically, we set $\Psi^I$ similarly to $\Psi_I$ except we change taking of minimum with taking of maximum when $0\not\in I$.
        \end{itemize} 
    \end{definition}
    \noindent
    Notice that each structure monoid $I$ is a submonoid of $\N$ with the multiplicative structure. Consider $a,b\in I$. Now $ab = a_1+\ldots + a_b\in I$ where $a_i = a$ for $i\leq b$.

    \begin{lemma}\label{structure category lemma}
        Let $\Delta$ be a structure category. Define the set $I\subset \N$ where $I$ consists of all the cardinalities of the fibers of functions in $\Delta$. Then the following conditions hold:
        \begin{enumerate}
            \item The set $I$ is a structure monoid.
            \item For each structure monoid $S$ the associated structures $\Delta_S\subset\Psi_S,\Psi^S\subset \Delta^S$ are structure categories, when defined. 
            \item If $\Delta$ has a single non-identity bijection, then $\Delta = \Delta^I$. \label{non-identity bijection}
            \item Given a structure monoid $S$, then $\Delta_S = \Delta^S$ if and only if $S = \N$.
            \item We have that $\Delta_I\subset\Delta\subset\Delta^I$.

            \item If $0\in I$, then $\Delta$ is one of the structure categories $\Delta_{\{0,1\}},\Delta^{\{0,1\}}$ or $\Delta^\N$
        \end{enumerate}
    \end{lemma}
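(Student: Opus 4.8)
Throughout, the key tool is the similarity axiom read with \emph{zero} parameters. Write $\delta_a\colon[a]\to[1]$ for the unique map to a point, and let $I$ be the fiber-cardinality set of $\Delta$. I first establish the equivalence $a\in I \iff \delta_a\in\Delta$: the forward direction is immediate, since the single fiber of $\delta_a$ has size $a$; conversely, if $f\colon[m]\to[n]$ lies in $\Delta$ with $|f^{-1}\{i_0\}|=a$, then the similarity component of $f$ at the tuple $k_{i_0}=1$, $k_j=0$ $(j\ne i_0)$ has domain $[\sum_i k_{f(i)}]=[a]$ and codomain $[1]$, i.e.\ equals $\delta_a$, and lies in $\Delta$ by closure (the case $a=0$ realizes the empty map $\delta_0$). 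This settles Part~1: $1\in I$ comes from identities, and for closure the composite $\delta_k\circ(\delta_{a_1}+\cdots+\delta_{a_k})\colon[a_1+\cdots+a_k]\to[1]$ lies in $\Delta$ with single fiber of size $\sum_s a_s$. The left inclusion of Part~5 follows too, since each generator $\delta_n$ $(n\in I)$ of $\Delta_I$ lies in $\Delta$; the right inclusion $\Delta\subseteq\Delta^I$ is immediate from the definition of $I$.

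Part~2 is fiber bookkeeping. For $\Delta^S$: a composite satisfies $|(hf)^{-1}\{j\}|=\sum_{i\in h^{-1}\{j\}}|f^{-1}\{i\}|$, a sum of $|h^{-1}\{j\}|\in S$ terms each in $S$, hence in $S$ by the monoid axiom; coproducts leave fiber sizes unchanged; and a similarity component has fibers of size $|f^{-1}\{j\}|$. For $\Psi_S$ (where $0\notin S$ forces surjectivity) the extra order condition is preserved because, writing $\alpha_i=\min f^{-1}\{i\}$ and $\beta_j=\min h^{-1}\{j\}$ (both strictly increasing), one gets $\min(hf)^{-1}\{j\}=\alpha_{\beta_j}$, again strictly increasing; coproducts and similarities are checked the same way, and $\Psi^S$ is the mirror image with $\max$. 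Since wide subcategories closed under coproducts and similarities are themselves closed under arbitrary intersection, $\Delta_S$ exists and is a structure category; its generators $\delta_n$ satisfy the (one-fiber, hence trivial) order condition, giving $\Delta_S\subseteq\Psi_S,\Psi^S\subseteq\Delta^S$.

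Part~3 uses the zero-parameter trick a second time to \emph{extract} a transposition. A non-identity bijection $\theta$ has an inversion $\theta(p)>\theta(q)$ with $p<q$; setting $k_u=k_w=1$ for $u=\theta(q)<w=\theta(p)$ and all other parameters $0$, the similarity component collapses to the swap $\tau\colon[2]\to[2]$, $1\mapsto2,\,2\mapsto1$. Then every adjacent transposition arises as a coproduct $\mathrm{id}+\tau+\mathrm{id}$, so by composition all permutations lie in $\Delta$. Finally any $f\in\Delta^I$ factors as $f=(\delta_{a_1}+\cdots+\delta_{a_n})\circ\pi$, where $\pi$ is the permutation sorting the domain into the fibers of $f$ and $a_j=|f^{-1}\{j\}|\in I$; both factors lie in $\Delta$, so $\Delta^I\subseteq\Delta$, and with Part~5 we conclude $\Delta=\Delta^I$.

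The real obstacle is Parts~4 and~6, i.e.\ showing $\Delta_S$ is \emph{proper} in $\Delta^S$ unless $S=\N$, which forces finding the right invariant in each regime. If $0\notin S$, every generator is $\mathrm{id}$ or non-injective, and composition, coproduct and similarity all preserve ``identity-or-non-injective''; thus $\Delta_S$ has no non-identity bijection while $\tau\in\Delta^S$ always, so they differ. If $0\in S$ one first notes the arithmetic fact that such a structure monoid is $\{0,1\}$ or $\N$: any element $>1$ lets one pad with zeros and ones to produce $2$, after which taking summands $n,1$ gives closure under $+1$ and hence all of $\N$. For $S=\{0,1\}$ we have $\Delta^{\{0,1\}}=$ all injections, and the invariant becomes \emph{monotonicity}: $\delta_0$ and identities are monotone injections and the three operations preserve this, so $\Delta_{\{0,1\}}$ consists of monotone injections and omits $\tau$. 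Conversely, the extraction of Part~3 applied to an inversion $x<y$ of a non-monotone injection (take $u=f(y)<w=f(x)$) again yields $\tau$; hence a $\Delta$ with $I=\{0,1\}$ and no non-identity bijection consists of monotone injections, so equals $\Delta_{\{0,1\}}$. Assembling: for $S=\N$, the explicit composite $\tau=g\circ(\delta_0+\mathrm{id}_{[1]}+\mathrm{id}_{[1]}+\delta_0)$, with $g\colon[4]\to[2]$ the fold (the similarity component of $\delta_2$ at the parameter $2$), puts $\tau$ into $\Delta_{\N}$, so Part~3 gives $\Delta_{\N}=\Delta^{\N}$ (this is the backward direction of Part~4 and the $I=\N$ case of Part~6); and Part~6's $I=\{0,1\}$ case splits by Part~3 into $\Delta^{\{0,1\}}$ (a bijection present) or $\Delta_{\{0,1\}}$ (none), which completes both statements.
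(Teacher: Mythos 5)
Your architecture coincides with the paper's own proof almost line for line: the zero-parameter similarity trick (the paper's ``suitable component'' argument, used there in Parts 1, 3 and 5), the fiber bookkeeping for Part 2, the transposition extraction followed by the fiber-sorting factorization $f=(\delta_{a_1}+\cdots+\delta_{a_n})\circ\pi$ for Part 3, the explicit composite putting $\tau$ into $\Delta_\N$, and the case split $S=\{0,1\}$ versus $S=\N$ for Parts 4 and 6. However, one step of yours is genuinely false: in the $0\notin S$ case of Part 4 you claim that similarity components preserve the class ``identity-or-non-injective.'' They do not. Take $\theta\colon[4]\to[3]$ with $\theta(1)=3,\ \theta(2)=1,\ \theta(3)=\theta(4)=2$: this is a non-injective surjection with fiber sizes $1,2,1$, yet its similarity component at $(k_1,k_2,k_3)=(1,0,1)$ has domain and codomain $[2]$ and sends $1\mapsto K_2+1=2$ and $2\mapsto K_0+1=1$, i.e.\ it \emph{is} the transposition $\tau$, a non-identity bijection. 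The failure mode is that non-injectivity can be concentrated at a value whose weight $k_j$ is zero, and zero weights are always permitted in the similarity axiom regardless of $S$ — indeed your own zero-parameter trick depends on exactly this. So your induction over the generation of $\Delta_S$ is unsound as stated: the similarity step can leave the invariant class.

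The conclusion (no non-identity bijection in $\Delta_S$ when $0\notin S$) is nevertheless true, and the repair is one line, namely the paper's route, which you already have in hand from your Part 2: $\Delta_S\subseteq\Psi_S$, and a bijection $f\in\Psi_S$ satisfies $\min f^{-1}\{i\}=f^{-1}(i)$ strictly increasing in $i$, hence $f=\mathrm{id}$; since $\tau\in\Delta^S$ always, $\Delta_S\subseteq\Psi_S\subsetneq\Delta^S$. (Note that the counterexample $\theta$ above is not min-monotone, which is precisely why the finer invariant $\Psi_S$ survives where your coarser one fails.) With that substitution the rest of your proposal is correct, and in one respect it improves on the source: your Part 6 handling of the injective case — applying the zero-parameter extraction to an inversion of a non-monotone injection to produce $\tau$ directly — is cleaner and more rigorous than the paper's sketch via ``inclusions and codiagonals.''
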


    \begin{proof}
        \hfill
        \begin{enumerate}
            \item Since $\Delta$ has an identity function on $[1]$, it follows that $1\in I$. Assume that $k,a_1,\ldots, a_n\in I$. Thus there are functions $g,f_i$ for $i\leq k$ where for some fiber of $g$ and $f_i$ attain the size of $k$ and $a_i$, respectively. We may assume that $g$ and $f_i$ have $[1]$ as the codomain, since otherwise we can define such functions using the associated similarity transformation. Now $g\circ (f_1+\ldots +f_k)$ has the fiber of size $a_1+\ldots +a_k$. Thus $a_1+\ldots +a_k\in I$. 
            
            \item Let $S$ be a structure monoid. First, we show that $\Delta^S$ is a structure category. Since identities have fibers of size $1$, it follows that $\Delta^S$ has identities. Assume that $f,g$ are functions in $\Delta$. The fibers of $f+g$ have the same sizes as the fibers of $f$ and $g$. Thus $f+g$ is in $\Delta$. Let $\theta\colon[m]\rightarrow [n]$ be a function in $\Delta$. Let $k_1,\ldots, k_n\in\N$ and denote $L_i = k_{\theta(1)}+\ldots+ k_{\theta(i)}$ and $K_j = k_1+\ldots + k_j$ for $i\leq m$ and $j\leq n$. Consider the component $\theta'_{k_1,\ldots, k_n}\colon [L_m]\rightarrow [K_n]$. Let $j\in [K_n]$. Now $K_{l-1}< j\leq K_l$ for some unique $l\leq n$. By definition $x\xmapsto{\theta'} j$ if and only if $x = L_{i-1} + x'$ where $\theta(i) = l$ and $ x' = j-K_{l-1}$. Thus $\theta'^{-1}(j)$ has the same cardinality as $\theta^{-1}(l)$. Hence $\theta'$ is a function in $\Delta^I$. 

            Next, assume that $0\not\in S$. Thus all the morhisms of $\Delta^S$ are surjections. We show that $\Psi_S$ is a structure category.

            First, $\Psi_S$ is a category, since it has identities and compositions of functions, which is seen as follows:
            Let $[k]\xrightarrow{f} [m]\xrightarrow{g} [n]$ be functions in $\Psi$. Let $i\leq j\leq n$. Now 
            \begin{align*}
            \min((gf)^{-1}\{i\}) 
            &= \min(f^{-1}g^{-1}\{i\}) \\
            &= \min(f^{-1}\{\min(g^{-1}\{i\}\}))\\
            &\leq \min(f^{-1}\{\min(g^{-1}\{j\}\}))\\
            &= \min((gf)^{-1}\{j\})
            \end{align*}
            The coproduct of two functions in $\Psi_S$ is in $\Psi_S$. To see the closure under components of similarities, fix a function $\theta\colon [m]\rightarrow[n]$ is in $\Psi_S$. Let $k_1,\ldots, k_n\in\N$ and denote $L_i = k_{\theta(1)}+\ldots+k_{\theta(i)}$ and $K_j = k_1+\ldots+k_j$ for $i\leq m$ and $j\leq n$. We need to show that $\theta' = \theta'_{k_1,\ldots, k_n}\colon [L_m]\rightarrow [K_n]$ is in $\Psi_S$. Consider $j_1\leq j_2\in [K_n]$. Now $K_{l_1-1}< j_1\leq K_l$ and $K_{l_2-1}<j_2\leq K_t$ for some unique $l_1\leq l_2\leq n$. Now $j_a = K_{l_a-1}+ x_a$ for $x_a = j_a - K_{l_a - 1}$ and $a = 1,2$. Let $i_1$ and $i_2$ be the least elements $\theta$ maps to $l_1$ and $l_2$, respectively. Since $\theta$ is in $\Psi_S$, we know that $i_1\leq i_2$. Now $L_{i_a -1} + x_a$ is the least element $\theta'$ maps to $j_a$ for $a = 1,2$. Notice that $i_1< i_2$ implies that $L_{i_1 - 1} + x_1 \leq L_{i_2 - 1}$ and hence $L_{i_1 - 1} + x_1< L_{i_2- 1} + x_2$. If $i_1 = i_2$, then $x_1\leq x_2$ and thus $L_{i_1 -1}+ x_1\leq L_{i_2 - 1} + x_2$. Hence $\theta'$ is in $\Psi_S$. 

            The case $\Psi^S$ being a structure category is similar. Since the generators of $\Delta_S$ are included in $\Psi_S$ and $\Psi^S$ it follows that $\Delta_S\subset \Psi_S,\Psi^S$. 

            \item Assume that $\Delta$ has a single non-identity bijections $f$. A suitable component of a similarity of $f$ yields a bijection $[2]\rightarrow [2]$ where $1\mapsto 2$. By induction, we attain that $\Delta$ has all bijections. Let $g\colon [m]\rightarrow [n]$ be a function in $\Delta$. Denote by $!_k$ the unique function $[k]\rightarrow [1]$ for $k\in\N$. Notice that up to pre-composition with bijection the functions $!^{g^{-1}\{1\}} + \ldots + !^{g^{-1}\{n\}}$ and $g$ are the same. Since $\Delta$ has all bijections and functions $!^k,k\in I$, it follows that $g$ is in $\Delta$.
            
            \item Let $S$ be a structure monoid. Assume that $S = \N$. It suffices to show that the bijection $[2]\rightarrow [2],1\mapsto 2$ is in $\Delta_\N$. Consider the function $(!^{2'}_{2})\circ (!^0+id_{[2]}+!^0)\colon [2]\rightarrow [4]\rightarrow [2]$. This function is the bijection $[2]\rightarrow [2]$ mapping $1\mapsto 2$ and it is in $\Delta_\N$. Thus $\Delta_\N = \Delta^\N$.  

            Assume then that $S\neq \N$. Let us consider the case that $0\in S$, then $S = \{0,1\}$. Notice that $\Delta^S$ consists of all injections. Notice that $\Delta_S$ contains all strictly increasing functions and strictly increasing functions do define a structure category. Thus $\Delta_S$ consists of strictly increasing functions and $\Delta_S\neq \Delta^S$. If $0\not\in S$, then $\Delta_S\subset \Psi_S\subsetneq \Delta^S$. Thus $\Delta_S\subsetneq\Delta^S$. 
            
        \item Consider $n\in I$. Thus there exists a function $\theta$ in $\Delta$ who has a fiber of size $n$. A suitable component of the similarity $\theta'$ yields exactly the function $[n]\rightarrow [1]$ and this is in $\Delta$. Since $\Delta_I$ is generated by such function, $\Delta_I\subset\Delta\subset\Delta^I$.

        \item Assume that $0\in I$. Thus $\Delta_I$ has all strictly increasing functions. Assume that $\Delta$ has a function $f\colon[m]\rightarrow [n]$ that is not strictly increasing. If $f$ is not an injection, then $m\in I$ for some $m>1$ and thus $I = \N$ and so $\Delta = \Delta^\N$. Assume that $f$ is an injection. It suffices to show that $\Delta$ has a non-identity bijection. Now there exists $i<j\leq m$ such that $f(j)<f(i)$. Since $0\in S$ by inclusions and codiagonals we can construct the function $[2]\rightarrow [m]$ and $[n]\rightarrow [2]$ such that composing them with $f$ yields a map $[2]\rightarrow [2]$, where $1\mapsto 2\mapsto 1$. Thus $\Delta = \Delta^I$ and $I$ can only be either $\{0,1\}$ or $\N$. 
        \end{enumerate}
         
    \end{proof}
    
    \begin{example}
    We will give examples of structure categories $\Delta$.
        \begin{enumerate}
            \item For each structure monoid $I\subset\N$ we have the structure categories $\Delta_I$ and $\Delta^I$.\begin{itemize}
                \item If $I = \{0,1\},\{1\},\N_{>0},\N$ we have $\Delta_I$ consists of strictly increasing maps, identities, functions generated via coproducts and similarities from increasing surjections and all functions, respectively. With the same conditions on $I$, the structure category $\Delta^I$ consists of injections, bijections, surjections and all functions, respectively.
                \item Let $N>0$. Define $I = I_N$ as $\{1,n\mid  n\geq N\}$ or $\{1,N,n\mid n\geq 2N-1\}$. The set $I$ defines a structure monoid and we have four different structure categories $\Delta_I\subset\Psi_I,\Psi^I\subset\Delta^I$. Since each $N>0$ defines a different structure category $\Delta^{I_N}$, it follows that there are infinitely many structure categories.

                \item Let $I$ be a structure monoid where $0\not\in I$. Then $\Psi_I$ consists of those surjections $f\colon [m]\rightarrow [n]$ where $\min f^{-1}\{i\}\leq \min f^{-1}\{j\}$ for $i\leq j\leq n$ and the cardinality of each fiber of $f$ is in $I$. 

                \item Increasing functions do not define a structure category. Consider a non-injective function $\theta\colon [m]\rightarrow [n]$. Assume that the fiber of $f$ at $j\leq n$ has cardinality of at least $2$. Now define $k_l = \begin{cases}
                    0, \text{ if }l\neq j,\\
                    2,\text{ if } l = j.
                \end{cases}$
                Now $\theta (i_1) = j = \theta (i_2)$ for some $i_1<i_2\leq m$. Denote $L_i = k_{\theta(1)}+\ldots + k_{\theta(i)}$ for $i\leq m$. Now $\theta'_{k_1,\ldots, k_n}(L_{i_1}+2) = 2$ but 
                $\theta'_{k_1,\ldots, k_n}(L_{i_2} + 1) = 1$ and $L_{i_1}+2<L_{i_2}+1$. Hence $\theta'_{k_1,\ldots, k_n}$ is not an increasing function. 
            \end{itemize}
            \end{enumerate}
    \end{example}
\subsection{\texorpdfstring{$\Delta$-Multicategories}{Delta-Multicategories}}
    We define $\Delta$–multicategories, where $\Delta$ is a structure category. These are multicategories $C$ equipped with a $\Delta$-action on the hom-sets of $C$. The first two action axioms relating to composition and identities are similar to the ones of monoid axioms, but the latter two require the coproduct and similarity closure properties of a structure category.
    \begin{definition}[$\Delta-$Multicategory]
    Let $\Delta$ be a structure category. A $\Delta$-multicategory $C$ consists of a class of objects $S$ and choice of sets $C(a,b)$ for each $a\in S^*$ and $b\in S$, where $S^*$ is the class $\bigsqcup_{n\in\N} S^n$ of finite words of $S$. For $f\in C(a,b)$, we denote $f\colon a\rightarrow b$ for $a\in S^*$ and $b\in S$.  In addition $C$ is equipped with the following structure:
    \begin{itemize}
        \item A family of maps $\circ$ called the composition: 
        
        $$
        \circ_{a^1,\ldots, a^n,b_1\ldots b_n,c}\colon C(b_1\cdots b_n,c)\times C(a^1,b_1)\times\cdots\times C(a^n,b_n)\rightarrow C(a^1\cdots a^n,b)
        $$
        for $a^1,\ldots, a^n\in S^*$ and $b_1,\ldots, b_n,c\in S$.
        
        \item For each $a\in S$ there is a choice of a morphism $id_a\colon a\rightarrow a$. 
        
        \item For each function $\theta\colon [m]\rightarrow [n]$ in $\Delta$ and $a_1,\ldots, a_n, b\in S$ there is a function
        $$
        \theta_{a_1\cdots a_n,b,*}\colon C(a_{\theta(1)}\cdots a_{\theta(m)},b)\rightarrow C(a_1\cdots a_n, b)
        $$
        \end{itemize}
        \noindent
        Furthermore, for $C$ to be a $\Delta$–multicategory we require the following axioms to be satisfied:
        \begin{itemize}
            \item The associativity of composition:
        {\small
        $$
        (h\circ (g^1,\ldots, g^n))\circ (f^1_1,\ldots, f^1_{m_1},\ldots, f^n_1,\ldots, f^n_{m_n}) = h\circ (g^1\circ (f^1_1,\ldots, f^1_{m_1}),\ldots, g^n\circ (f^n_1,\ldots, f^n_{m_n}))
        $$
        }
        for $h\colon c_1\cdots c_n\rightarrow d, g^i\colon b^i_1\cdots b^i_{m_i}\rightarrow c_i$ and $f^i_j\colon a^{i,j}\rightarrow b^i_j$ where $i\leq n$ and $j\leq m_i$.
        \item The identity laws: 
        $$
        f\circ (id_{a_1},\ldots, id_{a_n}) = f
        $$
        and
        $$
        id_b\circ f = f
        $$
        for $f\colon a\rightarrow b$, where $a = a_1\ldots a_n$ and $a_i,b\in S$ for $i\leq n$. 
        \item The axioms of a $\Delta$-action:
            \begin{enumerate}
                \item $$
                (id_{[n]})_{a,b,*} = id_{C(a,b)}
                $$
                for $n\in\N$, $a\in S^*$ and $b\in S$, where the length of the word $a$ is $n$.
                \item $$
                (\tau\sigma)_{a,b,*} = \tau_{a,b,*}\circ \sigma_{a_{\tau(1)}\cdots a_{\tau(m)},b,*}
                $$
                for functions $[k]\xrightarrow{\sigma}[m]\xrightarrow{\tau}[n]$ in $\Delta$ and $a\in S^*, l(a) = n,$ and $b\in S$.
                
                \item 
                $$
                g\circ (\sigma^1_{a^1,b_1,*}(f_1),\ldots ,\sigma^n_{a^n,b_n,*}(f_n)) = (\sigma^{1}+\cdots + \sigma^{m})_{a^1\cdots a^n,c,*}(g\circ (f_1,\cdots, f_{n}))
                $$
                for functions $\sigma^i\colon [k_i]\rightarrow [l_i],i\leq n$, in $\Delta$ and multimorphisms $g\colon b_{1}\cdots b_{n}\rightarrow c$ and $f_i\colon a^i_{\sigma^i(1)}\cdots a^i_{\sigma^i(k_i)} \rightarrow b_i$ where $a^i = a^i_1\cdots a^i_{k_i}$ for $a^i_1,\ldots,a^i_{k_i}\in S$ and $i\leq n$. 
                
                \item $$
                \tau_{b_1\cdots b_n, c,*}(g)\circ (f_1,\ldots, f_n) = (\tau'_{k_1,\ldots, k_n})_{a^1\cdots a^n,c,*}(g\circ (f_{\tau(1)},\ldots, f_{\tau(m)}))
                $$
                for function $\tau\colon [m]\rightarrow [n]$ and multi-morphisms $f_i\colon a^i\rightarrow b_i, l(a^i) = k_i,$ and $g\colon b_{\tau(1)}\cdots b_{\tau(m)}\rightarrow c$, $i\leq n$.
            \end{enumerate}
    \end{itemize}

    When the category $\Delta$ is the wide subcategory of $\textbf{FinOrd}$ consisting of all functions, bijections or identities, the corresponding $\Delta$–multicategories are called cartesian multicategories, symmetric multicategories and multicategories, respectively.
\end{definition}

\begin{definition}[$2$-category of $\Delta$–multicategories]
    Let $\Delta$ be a structure category and let $M$ and $N$ be $\Delta$–multicategories with classes of objects $R$ and $S$, respectively. A morphism $T\colon M\rightarrow N$ of $\Delta$–multicategories consists of a map $T\colon R\rightarrow S$ and functions $T = T_{a,b}\colon M(a,b)\rightarrow N(\overline{T}(a),T(b))$, where $a\in R^*, b\in R,$ and $\overline{T}\colon R^*\rightarrow S^*$ is the unique monoid morphism extending $T\colon R\rightarrow S$, also denoted as $T$. Additionally, $T$ respects composition, identities and the $\Delta$-action:
    \begin{itemize}
        \item $T(g\circ (f_1,\ldots, f_n)) = T(g)\circ (T(f_1),\ldots, T(f_n))$.
        \item $T(id_c) = id_{T(c)}$
        \item $T(\theta_{a,b,*}(g)) = \theta_{\overline{T}(a),T(b),*}(T(g))$. 
    \end{itemize}
    A natural transformation $\eta\colon T_1\Rightarrow T_2\colon M\rightarrow N$ consists of a family of morphisms $\eta_r\colon T_1(r)\rightarrow T_2(r)$ making the multicategorical diagram
    $$
    \begin{tikzcd}
T_1(a) \arrow[r, "T_1(f)"] \arrow[d, "\eta_a"'] & T_1(b) \arrow[d, "\eta_b"] \\
T_2(a) \arrow[r, "T_2(f)"']                     & T_2(b)                    
\end{tikzcd}
    $$
    commute for each multimorphism $f\colon a\rightarrow b$ in $M$. Here $\eta_a$ denotes the sequence $\eta_a = (\eta_{a_1},\ldots, \eta_{a_n})$ for $a = a_1\cdots a_n$ where $a_1,\ldots, a_n\in R$. The vertical and horizontal composition is defined similarly as in the $2$-category of categories. This determines the $2$-category $\Delta$-$\textbf{MultiCat}$ of $\Delta$–multicategories.
\end{definition}

\begin{lemma}
    Let $\Delta$ be a structure category and let $(T,\mu\colon T^2\Rightarrow T, \lambda\colon I\Rightarrow T) = T\colon M\rightarrow M$ be a monad in the $2$-category $\Delta$-$\textbf{MultiCat}$. Then the underlying Kleisli and Eilenberg-Moore categories associated to $T$ extend into a $\Delta$–multicategories. 
\end{lemma}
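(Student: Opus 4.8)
The plan is to perform the classical Kleisli and Eilenberg--Moore constructions inside $M$, verbatim on objects and hom-sets, and then to check that the resulting data is compatible with the $\Delta$-action. Throughout I will use that $T$ is a morphism of $\Delta$-multicategories (so it preserves composition, identities, and the action, e.g. $T(\theta_*(g)) = \theta_*(T(g))$) and that $\mu$ and $\lambda$ are $2$-cells, i.e. for every multimorphism $f\colon a_1\cdots a_n\to b$ one has $\mu_b\circ T^2(f) = T(f)\circ(\mu_{a_1},\ldots,\mu_{a_n})$ and the analogous identity for $\lambda$.

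For the Kleisli $\Delta$-multicategory $M_T$ I take the objects of $M$, set $M_T(a_1\cdots a_n,b) := M(a_1\cdots a_n, Tb)$, define composition by $g\circ_{T}(f_1,\ldots,f_n) := \mu_c\circ\bigl(T(g)\circ(f_1,\ldots,f_n)\bigr)$, take $\lambda_a\in M(a,Ta)=M_T(a,a)$ as the Kleisli identity, and let the $\Delta$-action be that of $M$ (the action only rewrites the source word, which is untouched). Associativity of $\circ_T$ is the usual Kleisli computation transported to the multicategory: expand both sides with $T$ preserving composition, slide the inner $\mu$ past $T(g)$ by naturality of $\mu$, and collapse the two copies of $\mu$ using the monad law $\mu\circ T\mu = \mu\circ\mu_T$. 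The identity laws follow from the unit laws $\mu\circ T\lambda = \mathrm{id} = \mu\circ\lambda_T$ together with naturality of $\lambda$. The action axioms (1),(2) are immediate since the action is inherited; for (3),(4) I would expand the Kleisli composite, apply the corresponding axiom of $M$ to $T(g)\circ(f_1,\ldots,f_n)$, and then invoke axiom (3) of $M$ in the special case where the outer morphism is the unary $\mu_c$, in order to pull $\mu_c$ through $(\sigma^1+\cdots+\sigma^n)_*$ respectively $(\tau'_{k_1,\ldots,k_n})_*$; here the recovery identity $\theta'_{1,\ldots,1}=\theta$ and the relation $T(\tau_*(g))=\tau_*(T(g))$ are the load-bearing facts.

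For the Eilenberg--Moore $\Delta$-multicategory $M^T$ I take as objects the $T$-algebras $(a,\alpha\colon Ta\to a)$ and as multimorphisms $M^T\bigl((a_1,\alpha_1)\cdots(a_n,\alpha_n),(b,\beta)\bigr)$ those $f\in M(a_1\cdots a_n,b)$ satisfying the homomorphism equation $\beta\circ T(f) = f\circ(\alpha_1,\ldots,\alpha_n)$. Composition, identities, and the $\Delta$-action are all inherited from $M$, so every multicategory and action axiom restricts automatically once I show the homomorphisms form a sub-structure. Closure under composition and identities is the classical argument (using that $T$ preserves composition and the algebra axioms). Closure under the action is the one genuinely new point: if $f$ is a homomorphism, then axiom (4) of $M$ together with $\theta'_{1,\ldots,1}=\theta$ gives $\theta_*(f)\circ(\alpha_1,\ldots,\alpha_n) = \theta_*\bigl(f\circ(\alpha_{\theta(1)},\ldots,\alpha_{\theta(m)})\bigr) = \theta_*(\beta\circ T(f))$, and axiom (3) with the unary $\beta$ rewrites this as $\beta\circ\theta_*(T(f)) = \beta\circ T(\theta_*(f))$, so $\theta_*(f)$ is again a homomorphism.

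The routine but laborious part is the Kleisli associativity, and the genuinely delicate part — the main obstacle — is showing that the fourth action axiom survives the Kleisli composition and that the action closes on Eilenberg--Moore homomorphisms. In both cases the similarity natural transformation $\tau'$ must be reconciled with a composition that threads through $\mu$ and $T$, and the two levers that make this work are the recovery identity $\theta'_{1,\ldots,1}=\theta$ and the unary special case of action axiom (3), which lets a single $\mu_c$ or $\beta$ commute past an arbitrary component of a similarity. Once these compatibilities are in hand, the remaining verifications are bookkeeping.
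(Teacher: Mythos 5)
Your proposal is correct and follows essentially the same route as the paper: identical Kleisli and Eilenberg--Moore constructions (hom-sets $M_T(a,b)=M(a,Tb)$, $\lambda$ as Kleisli identity, inherited $\Delta$-action), with associativity and unitality handled by the same naturality-of-$\mu,\lambda$ computations. In fact you spell out the two points the paper leaves as ``a quick check'' --- pushing $\mu_c$ (resp.\ the algebra structure map $\beta$) past the action via the unary case of axiom (3), and using $T(\theta_*(f))=\theta_*(T(f))$ together with $\theta'_{1,\ldots,1}=\theta$ --- and these verifications are exactly right.
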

\begin{proof}
    We define the Kleisli category $M_T$ as follows:
    \begin{itemize}
        \item The objects of $M_T$ are the same as the objects of $M$. 
        \item A multi-morphism $a\rightarrow b$ in $M_T$ consists of a multimorphism $f\colon a\rightarrow T(b)$ in $M$. 
        \item Let $g\colon b_1\cdots b_n\rightarrow c$ and $f_i\colon a^i\rightarrow b_i$ be multimorphisms for $i\leq n$ in $M_T$. We define the composition as follows:
        $$
        g\circ_{M_T}(f_1,\ldots, f_n) \coloneq \mu_c\circ_M T(g)\circ_M(f_1,\ldots, f_n).
        $$
        The unit of the monad $T$ defines the identity morphisms of $M_T$. 
        \item Let $\theta\colon [m]\rightarrow [n]$ be a function in $\Delta$. We define $\theta^T_{a,b,*}\colon M_T(a_\theta,b)\rightarrow M_T(a,b)$ as the function $\theta_{a,Tb,*}\colon M(a_\theta,T(b))\rightarrow M(a,T(b))$, where $a = a_1\cdots a_n, a_\theta = a_{\theta(1)}\cdots a_{\theta(m)}$ and $a_1,\ldots, a_n, b$ are objects of $M$. 
    \end{itemize}
    We show that $M_T$ is a $\Delta$-multicategory:
    \begin{enumerate}
        \item To show that the identity laws hold, fix $f\colon a = a_1\cdots a_n\rightarrow b$ in $M_T$, where $a_1,\ldots, a_n$ and $b$ are objects of $M$. Now
        $$
        f\circ_{M_T}(\lambda_{a_1},\ldots, \lambda_{a_n}) = \mu_b \circ T(f)\circ (\lambda_{a_1},\ldots, \lambda_{a_n}) = \mu_b\lambda_{Tb} f = f
        $$
        and
        $$
        \lambda_b\circ_{M_T} f = \mu_{b}\circ T(\lambda_b)\circ f = f.
        $$
        Thus the identity laws hold.
        \item For associativity, fix multimorphisms $h\colon c_1\cdots c_n\rightarrow d$, $g_i\colon b^i\rightarrow c_i$ and $f^i_j\colon a^{i,j}\rightarrow b^i_j$ for $b^i = b^i_1\cdots b^i_{m_i}$ for $i\leq n$ and $j\leq m_i$. Now
        \begin{align*}
            &(h\circ_{M_T} (g_1,\ldots, g_n))\circ_{M_T}(f^1_1,\ldots, f^n_{m_n})\\
            &=(\mu_d T(h) \circ (g_1,\ldots, g_n))\circ_{M_T}(f^1_1,\ldots, f^n_{m_n})\\
            &= \mu_d T(\mu_d T(h)\circ (g_1,\ldots, g_n))\circ (f^1_1, \ldots, f^n_{m_n})\\
            &= (\mu_d T(\mu_d) TT(h)\circ (T(g_1),\ldots, T(g_n)))\circ (f^1_1,\ldots, f^n_{m_n})\\
            &= (\mu_d \mu_{Td} TT(h)\circ (T(g_1),\ldots, T(g_n)))\circ (f^1_1,\ldots, f^n_{m_n})\\
            &= ((\mu_d T(h)\circ (\mu_{c_1},\ldots, \mu_{c_n}))\circ (T(g_1),\ldots, T(g_n)))\circ (f^1_1,\ldots, f^n_{m_n})\\
            &= (\mu_d T(h)\circ (\mu_{c_1} T(g_1),\ldots, \mu_{c_n}T(g_n)))\circ (f^1_1,\ldots, f^n_{m_n})\\
            &= \mu_d T(h)\circ (\mu_{c_1} T(g_1)\circ (f^1_1,\ldots, f^1_{m_1}),\ldots, \mu_{c_n} T(g_n)\circ (f^n_1,\ldots, f^n_{m_n}))\\
            &= \mu_d T(h)\circ (g_1\circ_{M_T}(f^1_1,\ldots, f^1_{m_1}),\ldots, g_n\circ_{M_T}(f^n_{1},\ldots, f^n_{m_n}))\\
            &= h\circ_{M_T} (g_1\circ_ {M_T}(f^1_1,\ldots, f^1_{m_1}),\ldots, g_n\circ_{M_T}(f^n_1,\ldots, f^n_{m_n}))
        \end{align*}
        \item The $\Delta$–action axioms are satisfied since $T$ respects the action and thus $M_T$ is a $\Delta$–multicategory.
    \end{enumerate}

    We define the Eilenberg-Moore $\Delta$-multicategory $M^T$ as follows:
    \begin{itemize}
        \item The objects are pairs $(a,f\colon Ta\rightarrow a)$ where the diagrams 
        $$
        \begin{tikzcd}
TTa \arrow[r, "\mu_a"] \arrow[d, "Tf"'] & Ta \arrow[d, "f"] & a \arrow[r, "\lambda_a"] \arrow[rd, "id"'] & Ta \arrow[d, "f"] \\
Ta \arrow[r, "f"']                      & a                 &                                            & a                
\end{tikzcd}
        $$
        commute.
        \item A multimorphism $(a_1,f_1)\cdots (a_n,f_n)\rightarrow (a,f)$ in $M^T$ consist of a multimorphism $\alpha\colon a_1\cdots a_n\rightarrow a$ in $M$ making the diagram
        $$
        \begin{tikzcd}
Ta_1\cdots Ta_n \arrow[r, "T(\alpha)"] \arrow[d, "{(f_1,\ldots, f_n)}"'] & Ta \arrow[d, "f"] \\
a_1\cdots a_n \arrow[r, "\alpha"']                                       & a                
\end{tikzcd}
        $$
        commute.
        \item The composition in $M^T$ is defined as in $M$.
        \item The action $\theta_{a,b,*}\colon M^T(a_{\theta},b)\rightarrow M^T(a,b)$ is defined as in $M$ and a quick check shows that it is well defined.
    \end{itemize}
    Associativity and unitality axioms of a multicategory are satisfied making $M^T$ a $\Delta$–multicategory.
\end{proof}

\begin{example}Here we give examples of different $\Delta$-multicategories.
    \begin{enumerate} 
        \item Each (cartesian/symmetric) monoidal category determines a (cartesian/symmetric) multicategory. 
        \item If $C$ is a $\Delta$-multicategory, then any full submulticategory of $C$ is also a $\Delta$–multicategory. Denote the class of objects of $C$ by $S$. Let $P$ be any subclass of $S$. We define the full submulticategory $A$ over $P$ as the one which has $P$ as the class of objects and $A(a,b) = C(a,b)$ for $a\in P^*$ and $b\in P$. The $\Delta$-action restricts into $A$ and thus $A$ has the structure of a $\Delta$-multicategory.
        \item The category of compactly generated Hausdorff spaces has two different cartesian multicategory structures, one defined by its categorical product and the other defined by restricting the cartesian structure of topological spaces.
        \item The cartesian multicategory of sets is specifically defined as follows:
        \begin{itemize}
            \item The objects are sets $X$.
            \item A morphism $f\colon X_1\cdots X_n\rightarrow X$ is just a function $\Pi_{i\leq n} X_i\coloneq \{(x_1,\ldots, x_n)\mid  x_i\in X_i,i\leq n\}\rightarrow X$. 
            \item Composition is defined using usual function composition.
            \item Let $\theta\colon [m]\rightarrow [n]$ be a function between finite ordinals and let $f\colon X_{\theta(1)}\cdots X_{\theta(m)}\rightarrow X$ be a multimorphism. We define $\theta_*(f)(x_1,\ldots, x_n) = f(x_{\theta(1)},\ldots, x_{\theta(m)})$ for $x_i\in X_i$ for $i\leq n$. 
        \end{itemize}
        We will show only the last condition of the action axiom:
        Let $\theta\colon[m]\rightarrow [n]$ and let $g\colon Y_{\theta(1)}\cdots Y_{\theta(m)}\rightarrow Y$ and $f_i\colon X^1\rightarrow Y_i$ for $i\leq n$. We show that 
        $$
        \theta_*(g)\circ (f_1,\ldots, f_n) = (\theta'_{k_1,\ldots, k_n})_*(g\circ (f_{\theta(1)},\ldots, f_{\theta(m)})),
        $$
        where $k_i$ is the length of the word $X^i$ for $i\leq n$. Let $x_i\in \Pi X^i$ for $i\leq n$. Now
        \begin{align*}
            \theta_*(g)\circ (f_1,\ldots, f_n)(x_1\cdots x_n)
            &= \theta_*(g)(f_1(x_1),\ldots, f_n(x_n))\\
            &= g(f_{\theta(1)}(x_{\theta(1)}),\ldots, f_{\theta(m)}(x_{\theta(m)}))\\
            &= (\theta_{k_1,\ldots, k_n}')_*(g\circ (f_{\theta(1)},\ldots,f_{\theta(m)}))(x_1,\ldots, x_n).
        \end{align*}
        Thus $\theta_*(g)\circ (f_1,\ldots, f_n) = \theta'_*(g\circ(f_1,\ldots, f_n))$.

        \item Interestingly, the category of relations on sets has a cartesian monoidal structure given by its cartesian monoidal structure defined by the disjoint union of sets and hence it has a cartesian multicategory structure. There is another surjective-multicategory structure on the category of relations determined by the powerset monad $(\mathcal{P},\mu\colon \mathcal{PP}\Rightarrow \mathcal{P}, \lambda\colon I\Rightarrow \mathcal{P})$ on the surjective-multicategory of sets. We set
        $$
        \mathcal{P}(f\colon X_1\cdots X_n\rightarrow X)(A_1,\ldots, A_n) = f(A_1\times\cdots\times A_n)
        $$
        and
        $$
        x\xmapsto{\lambda_X} \{x\}\colon X\rightarrow\mathcal{P}(X)\text{ and }\mathcal{A}\xmapsto{\mu_X}\bigcup\mathcal{A}\colon \mathcal{PP}(X)\rightarrow\mathcal{P}(X)
        $$
        for sets $X, X_i$ and $A_i\subset X_i,i\leq n$. It is fast to check that $\mathcal{P}$ is a morphism between the multicategories. It is equivariant with respect the action of $\Delta^{\mathbb{N}_1}$ making $\mathcal{P}$ into a monad on the surjective multicategory of sets. Thus, as the Kleisli-category of the powerset monad, the category of relations inherits a surjective multicategory structure. Let $\theta\colon [1]\hookrightarrow[2]$ and $f\colon [1]\rightarrow [1]$. Consider $\theta_{[1]\emptyset, [1],*}(f)\colon [1]\emptyset \rightarrow [1]$ which is the empty map. Notice that $\mathcal{P}(\theta_*(f))$ is not a surjection, but $\theta_*(\mathcal{P}(f))$ is. Hence $\mathcal{P}$ is not an endomorphism on the cartesian multicategory of sets.
        
        \item The category of sets equipped with partial functions is a surjective multi-category as the Kleisli multicategory of the list-monad $T$. The failure of $\textbf{Set}^T$ to be a cartesian multicategory is related to the failure of $\textbf{Set}^\mathcal{P}$ to be a cartesian multicategory.
        \end{enumerate}
\end{example}
\section{Universal algebra}
We define the syntax and semantics of universal algebra. Syntax contains notions of a signature $\sigma$, a context structure $R$, an $R$–equational theory $E$ and an $R$-deduction system $\vdash_R$. Semantics concentrates on the models inhabiting in different multicategories and we define the connection between syntax and semantics via the satisfiability of models. Once the concepts are bolted down, we prove the soundness and completeness of some of the associated deduction systems.
\subsection{Syntax}
Syntax refers to the construction of mathematical structures out of symbol manipulation itself. We define a symbol as an element of a set and using recursive constructions on symbols we create the languages appropriate to our setting.
\begin{definition}[Signature of universal algebra]
    A signature $\sigma$ consists of the following data:
    \begin{itemize}
        \item A set $S$ of sorts.
        \item A set of function symbols $M$ with a typing $t\colon M\rightarrow S^*\times S$. We denote $f\colon a\rightarrow b$ for $f\in M$ where $f\xmapsto{t} (a,b)$ and if $a = ()$ is the empty sequence, then we say that $f$ is a constant symbol and $f\colon b$. 
        \item A set of typed variables $t'\colon V\rightarrow S$. Notation $x\colon s$ means that $x\xmapsto{t'} s$. We require the fibers of $t'$ to be countably infinite.
    \end{itemize}
    We will often shorten and denote $\sigma = (S,M)$ or $\sigma = (S,M,V)$. 
\end{definition}
\begin{definition}[Terms]
    Fix a signature $\sigma = (S,M,V)$. We define a typed set Term $\ra S$ of terms as follows:
    \begin{itemize}
        \item $c\colon s,x\colon s\in \text{Term}$ for $c\colon s\in M$ and $x\colon s\in V$. 
        \item We define $f(t_0,\ldots, t_n)\colon b\in \text{Term}$ for $f\colon a_0\cdots a_n\rightarrow b\in M$ and $t_i\colon a_i\in\text{Term}$ for $i\leq n$. 
    \end{itemize}
    We define a function $\tau\colon \text{Term}\ra V^*$, where $\tau(t) = \begin{cases}
        t, \text{ if $t$ is a variable symbol}\\
        (),\text{ if $t$ is a constant symbol}\\
        \tau(t_1)\ldots \tau(t_n),\text{ if $t = f(t_1,\ldots,t_n)$}.
    \end{cases}$
    Let $t$ be a term and denote $\tau(t) = x_1\ldots x_n$. The set of variables of a term $t$ is $\text{Var}(t) = \{x_1,\ldots, x_n\}$ and similarly for words $v\in V^*$.
\end{definition}

\begin{definition}[Context Structure]
    Let $V$ be a set, whose elements we call letters. We denote the set of finite words of $V$ as $V^*$. We say that a word $v\in V^*$ is a context of $V$ if no element in $v$ repeats and we denote the set of the contexts of $V$ with $C$. We call a relation $R\subset C\times V^*$ a context structure if it satisfies the four conditions:
    \begin{enumerate}
        \item $cRc$ for every $c\in C$.
        \item $cRv$ implies that all letters in $v$ are expressed in $c$.
        \item $cRv^1\cdots v^n$ and $v^iRw^i,$ for $v^i,w^i\in V^*$ and $i\leq n$, implies $cRw^1\cdots w^n$. 
        \item If $cRv$, $s\colon \text{Var}(c)\rightarrow V^*$ and $dRs(c)$, then $dRs(v)$. Here $s(v) = s(v_1)\cdots s(v_n)$ where $v = v_1\cdots v_n$ and $v_1,\ldots, v_n\in V$. 
    \end{enumerate}
     We call a word $v\in V^*$ an $R$-word if $cRv$ for some $c\in C$ and in such case we say that $c$ is an $R$-context for $v$.
     \begin{itemize}
         \item A context structure $R\subset C\times V^*$ on $V$ is said to be modelable if $cRv^1\cdots v^n$ implies that there exists contexts $c^1,\ldots,c^n$ where $cRc^1\cdots c^n$ and $c^iRv^i$ for $i\leq n$. 
        \item A context structure $R$ is said to be balanced if $vRw$ implies that $\text{Var}(v) = \text{Var}(w)$.
        \item A balanced and modelable context structure is called balanced-complete.
     \end{itemize}
     
\end{definition}

There is a correspondence between structure categories $\Delta$ and context structures $R$:
\begin{theorem}
    Let $V$ be an infinite set. Then the construction $R\mapsto \Delta_R$ defines a bijection from context structures to structure categories, where $\Delta_R$ consists of functions $\theta\colon [m]\rightarrow [n]$ where $c_1\ldots c_n R c_{\theta(1)}\cdots c_{\theta(m)}$ and $c = c_1\cdots c_n$ is a context where $c_1,\ldots, c_n\in V$. 
\end{theorem}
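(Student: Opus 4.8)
The plan is to exhibit an explicit two-sided inverse $\Delta\mapsto R_\Delta$ and verify that $R\mapsto\Delta_R$ and $\Delta\mapsto R_\Delta$ are mutually inverse, rather than proving injectivity and surjectivity separately. The guiding principle is a dictionary between the four axioms of a context structure and the closure properties of a structure category: reflexivity (1) corresponds to identities, the concatenation axiom (3) corresponds to coproducts, and the substitution axiom (4) corresponds to composition together with components of similarities.

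First I would check that $\Delta_R$ is well defined and is a structure category. Since $V$ is infinite, a context $c = c_1\cdots c_n$ of every length $n$ exists, so membership of $\theta\colon[m]\rightarrow[n]$ can be tested at all; independence of $\Delta_R$ from the chosen context of length $n$ follows from axiom (4) applied to the renaming substitution $c_i\mapsto d_i$ between two length-$n$ contexts, whose hypothesis is reflexivity (1). Identities lie in $\Delta_R$ directly by (1). Closure under coproducts follows from (3): starting from the reflexivity instance $ce\,R\,ce$ for concatenated disjoint contexts $c,e$, splitting into the two blocks $c$ and $e$, and feeding in $c\,R\,c_{\theta^1(1)}\cdots c_{\theta^1(m_1)}$ and $e\,R\,e_{\theta^2(1)}\cdots e_{\theta^2(m_2)}$. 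Closure under composition follows from (4) with the substitution $d_i\mapsto c_{\tau(i)}$, whose hypothesis $c\,R\,c_{\tau(1)}\cdots c_{\tau(m)}$ is precisely $\tau\in\Delta_R$ and whose conclusion is $\tau\sigma\in\Delta_R$. Closure under similarity components follows from (4) with the block substitution $c_i\mapsto B_i$ sending each letter to a block $B_i$ of $k_i$ fresh distinct letters, whose conclusion $s(c)\,R\,B_{\theta(1)}\cdots B_{\theta(m)}$ is exactly $\theta'_{k_1,\ldots,k_n}\in\Delta_R$.

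Next I would define, for a structure category $\Delta$, the relation $R_\Delta$ by declaring $c\,R_\Delta\,v$ (for $c$ a context and $v\in V^*$) to hold precisely when every letter of $v$ occurs in $c$ and the induced function $\theta$ with $v = c_{\theta(1)}\cdots c_{\theta(|v|)}$ lies in $\Delta$; this $\theta$ is unique because the letters of $c$ are distinct. Verifying the four axioms reduces to two algebraic identities. For (3), writing $v^i = c_{\alpha_i(1)}\cdots c_{\alpha_i(\cdot)}$ and $w^i = v^i_{\phi_i(1)}\cdots v^i_{\phi_i(\cdot)}$, the function induced by $w^1\cdots w^n$ in $c$ is the copairing $[\alpha_1\phi_1,\ldots,\alpha_n\phi_n] = [\alpha_1,\ldots,\alpha_n]\circ(\phi_1+\cdots+\phi_n)$, which lies in $\Delta$ by closure under coproducts and composition since $[\alpha_1,\ldots,\alpha_n]$ is the function of $v^1\cdots v^n$ in $c$. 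For (4), writing the function of $s(c)$ in $d$ as a copairing $\mu = [\mu_1,\ldots,\mu_n]$, the function induced by $s(v)$ in $d$ is $\mu\circ\theta'_{k_1,\ldots,k_n}$, which lies in $\Delta$ by closure under similarity components and composition. Axiom (1) is the presence of identities and axiom (2) is built into the definition of $R_\Delta$.

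Finally, the two composites are identities: $\Delta_{R_\Delta} = \Delta$ holds because the function induced by $c_{\theta(1)}\cdots c_{\theta(m)}$ in $c_1\cdots c_n$ is $\theta$ itself, and $R_{\Delta_R} = R$ holds because, evaluating $\Delta_R$ on the ambient context via the independence established above, the condition $\theta\in\Delta_R$ unwinds to $c\,R\,c_{\theta(1)}\cdots c_{\theta(m)}$. I expect the main obstacle to be the bookkeeping in the two algebraic identities — recognizing that the concatenation axiom is exactly copairing, hence composition with a coproduct, and that substitution by blocks is exactly post-composition with a similarity component — along with the index arithmetic of $\theta'_{k_1,\ldots,k_n}$, together with the mild but essential point that all the fresh letters and contexts required are available because $V$ is infinite.
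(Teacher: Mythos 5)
Your proposal is correct and follows essentially the same route as the paper's proof: the same definition of $\Delta_R$ verified via axiom (4) with renamings (composition), axiom (3) plus renaming invariance (coproducts), and axiom (4) with block substitutions (similarity components), together with the same inverse $R_\Delta$ whose axioms reduce to the identities $[\alpha_1,\ldots,\alpha_n]\circ(\phi_1+\cdots+\phi_n)$ and $\mu\circ\theta'_{k_1,\ldots,k_n}$, exactly as in the paper. If anything, you are slightly more explicit than the paper on the final step, since the paper merely asserts that the two constructions are mutually inverse while you sketch why $\Delta_{R_\Delta}=\Delta$ and $R_{\Delta_R}=R$.
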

\begin{proof}
    Fix a context structure $R$ on $V$ and fix an injection $c\colon\N\rightarrow V$. Notice that $c^n\coloneqq c_1\cdots c_n$ is a context for each $n\in\N$. We define a structure category $\Delta_R$ to be the wide subcategory of $\textbf{FinOrd}$ having the functions $f\colon [m]\rightarrow [n]$, where $c^nRc_f$, where $c_f\coloneqq c_{f(1)}\cdots c_{f(m)}$. We will often use $d_\theta$ to denote $d_{\theta(1)}\cdots d_{\theta(m)}$ where $d = d_1\cdots d_n, d_i\in V, i\leq n$ and $\theta\colon [m]\rightarrow [n]$.

   We will show that $\Delta_R$ is a structure category. Since $c^nRc^n$, it follows that $id\colon [n]\rightarrow [n]$ is a morphism in $\Delta_R$ for each $n\in\N$. Assume that $[k]\xrightarrow{f}[m] \xrightarrow{g} [n]$. We need to show that $g\circ f\colon [k]\rightarrow [n]$ is a morphism in $\Delta_R$. Now we have that $c^mRc_f$ and $c^nRc_g$. Consider the renaming $s\colon c_i\mapsto c_{g(i)}$ for $i\leq m$. Notice that $s(c^m) = s(c_1)\cdots s(c_m) =  c_g$. Since $c^nRs(c^m) = c_g$ and $c^mR c_f$, it follows that $c^nRs(c_f) = c_{g\circ f}$.

   Now we show that $\Delta_R$ is closed under the coproduct of morphisms. Let $f\colon[m_i]\rightarrow [n_i]$ be a morhpism in $\Delta_R$ for $i = 1,2$. We show that $c^{n_1+n_2}Rc_{f_1(1)}\cdots c_{n_1+f_2(m_2)}$. By definition of $f_1$ and $f_2$ being function in $\Delta_R$, we have that $c^{n_1}Rc_{f_1}$ and $c^{n_2}Rc_{f_2}$. Consider $c_i\xmapsto{s} c_{n_1+i}$ for $i\leq n_2$. Now $s(c^{n_2}) = c_{n_1+1}\cdots c_{n_1+n_2}Rc_{n_1+f_2(1)}\cdots c_{n_1+f_2(m_2)} = s(c_{f_2})$. Since $c^{n_1}Rc_{f_1}$ and $c_{n_1+1}\cdots c_{n_1+n_2} R c_{n_1+f_2(1)}\cdots c_{n_1+f_2(m_2)}$, we attain that $c^{n_1+n_2}Rc_{f_1(1)}\cdots c_{n_1+f_2(m_2)} = c_{f_1+f_2}.$

   Next we show that $\Delta_R$ satisfies the similarity condition. Let $\theta\colon [m]\rightarrow [n]$ be in $\Delta$. Let $k_1,\ldots, k_n\in\N$. We denote $L_i = k_{\theta(1)}+\ldots + k_{\theta(i)}$ and $K_j = k_1+\cdots + k_j$ for $i\leq m$ and $j\leq n$. We need to show that the map $\theta'_{k_1,\ldots, k_n} = \theta' \colon [L_m] \rightarrow [K_n]$ is a morphism in $\Delta$. By $c^{i,j}$ we mean the context $c_{i+1}\cdots c_{j}$ for $i\leq j$. We show that $c^{K_n} R c_{\theta'}$. Let $d_i = c^{K_{i-1},K_i}$ for $i\leq n$. Consider the mapping $c_i\xmapsto{s}d_i$ for $i\leq n$. Now $c^{K_n} = d_1\cdots d_n$ and so $c^{K_n} R d_{\theta(1)}\cdots d_{\theta(m)}$. Notice that $d_{\theta(1)}\cdots d_{\theta(m)} = c_{\theta'}$. This is seen by the fact that the the variable at position $L_{i-1}+j$ in $d_{\theta(1)}\cdots d_{\theta(m)}$ is $c_{K_{\theta(i)-1} + j} = c_{\theta'(L_{i-1}+j)}$ for $0<j\leq k_{\theta(i)}$. 
   
   Conversely given a structure category $\Delta$,  we set $(c,v)\in R_\Delta$, if and only if $v = c_\theta$ for some $\theta\colon [l(v)]\rightarrow [l(c)]$. Now we show that $R_\Delta$ is a context structure. The first two conditions follow directly:  Clearly $cRc$ for all contexts $c\in V^*$ and if $cRv$, then $v = c_f = c_{f(1)}\cdots c_{f(l(v))}$ for some function $f$ and thus all variables in $v$ are expressed in $c$.
   
  Assume that $cRv_1\cdots v_n$ and $v_iRw_i$ for $i\leq n$ with $v_i, w_i$ and $c$ having lengths $k_i,l_i,j\in\N$, respectively for $i\leq m$. Thus we have functions $g\colon [k_1+\ldots + k_n]\rightarrow [j]$ and $f_i\colon [l_i]\rightarrow [k_i]$ for $i\leq n$ where $v_1\cdots v_n = c_{g(1)}\cdots c_{g(k_1+\ldots +k_n)}$ and $w_i = v_{i,f_i(1)}\cdots v_{i,f_{i}(l_i)}$. Thus we have $g\circ (f_1+\ldots + f_n)\colon [l_1+\ldots + l_n]\rightarrow [j]$ in $\Delta$ and now $c_{g\circ(f_1+\ldots + f_n)} = w_1\cdots w_n$.
    
  Lastly, let $cRv$, $s\colon \text{Var}(c)\rightarrow V^*$ and assume that $dRs(c)$. Thus $v = c_\theta$ and $s(v) = d_\phi$ for some $\theta\colon[m]\rightarrow [n]$ and $\phi\colon[k_1+\ldots+ k_n]\rightarrow [l]$ where $m,n$ and $k_i$ are the lengths of $v$, $c$ and $s(v_i)$, respectively for $i\leq n$. We are to show that $dRs(v)$. Notice that $s(v) = d_{\phi\circ \theta'_{k_1,\ldots, k_n}}$ which shows that $dRs(v)$.

   These mappings $R\mapsto \Delta_R$ and $\Delta\mapsto R_\Delta$ are inverses of each other
   which proves the claim.
\end{proof}

Since we have bijective correspondence between structure categories and context structures on an infinite set, we are able to many examples of context structures.
\begin{example} Let $V$ be an infinite set and let $x,y,z\in V$ be pairwise different elements of $V$. Assume that $c$ is a context of $V$ and $v$ a word of $V$. Consider the following context structures on $V$:
\begin{enumerate}
    \item Let $I$ be a structure monoid.
    \begin{itemize}
        \item There is a context structure $R^I$, where $cRv$ if and only if each variable $x$ in $v$ is in $c$ and expressed $n_x$ many times where $n_x\in I$. If $0,3\in I$, then $xyzR^I zzzy$.
            \begin{itemize}
                \item We call $R^{\{0,1\}}$ the injective context structure and notice that $cR^{\{0,1\}} v$ if and only if each variable in $v$ is expressed at most once in the context $c$: $xyz R^{\{0,1\}} xz$.
                \item The context structure $R^\N$ is called the cartesian context structure and $cR^\N v$ if and only if the variables in $v$ are expressed in context $c$: $xyzR^\N yxyxx$.
                \item The surjective context structure is defined to be $R^{\{m\geq 1\}}$ and notice $cR^{\{m\geq 1\}} v$ if and only if all the variables in $v$ are expressed in $c$: $xyzR^{\{m\geq 1\}} xyyzx$
                \item The context structure $R^{\{1\}}$ is called the bijective context structure and $cRv$ if and only if $c$ and $v$ are permutations of each other: $xyzR^{\{1\}}yzx$. 
            \end{itemize}
        \item We have a context structure $R_I$ which is generated by the condition $xRx^n$ for $n\in I$. Each context structure $R$ satisfies $R_I\subset R\subset R^I$ where $I$ is the set consisting $n\in\N$ where $xRx^n$ for $x\in V$. 
            \begin{itemize}
                \item The context structures $R_I$ for $I = \{0,1\}$ and $\{1\}$ are called the strictly increasing and trivial context structures. 
            \end{itemize}
        \item Assume $0\not\in I$. The left surjections of $\Delta^I$ correspond to the context structure $R$ called left surjective $I$-context structure, where $cRv$ if $cR^I v$ and the first appearance of a variable $x$ in $v$ is before the appearance of the variable $y$ in $v$, if the appearance of $x$ in $c$ is earlier than the appearance of $y$ in $c$. Consider as an example $xyR xxyyyx$ if $3\in I$ for different $x,y\in V$. Similarly the right surjections of $\Delta^I$ define the right surjective $I$-context structure (the last appearance of $x$ is earlier than the last appearance of $y$ in $v$ if $x$ appears before $y$ in $c$).
        \item The following eight context structures are modelable and later we will see that there are no others.
            \begin{itemize}
                \item Five are attained as follows: $R^I,R_I$ for $I = \{0,1\}, \{1\},\N$, which corresponds to the injective, strictly increasing, bijective, trivial and cartesian context structures.
                \item We attain three more from the surjective context structure $R^{\{m\geq 1\}}$ and its substructures defined by left and right surjective context structures. 
            \end{itemize}
    \end{itemize}
\end{enumerate}
\end{example}
\begin{definition}
    Let $V$ be an infinite set with a context structure $R$. Let $v\in V^*$ be a word. We call a context $c$ a terminal $R$-context of $v$ if $wRc$ is equivalent with $wRv$ for all contexts $w\in V^*$. We will often denote $c = \overline{v}$. 
\end{definition}
A context structure $R$ has two different terminal contexts for a word if and only if $R = R^I$ for some structure monoid $I$. This is seen from the fact that if $v$ has terminal contexts $c$ and $d$, then they are contexts for each other and the corresponding structure category $\Delta_R$ contains a non-identity bijection and hence $\Delta_R = \Delta^I$ for some structure monoid $I$ by Lemma \ref{structure category lemma}(3).

\begin{theorem}\label{characterizing modelable context-structures}
    Let $V$ be an infinite set. Then there are exactly eight different modelable context structures $R$ on $V$. Especially, all modelable context structures $R$ have terminal contexts for all $R$-word.
\end{theorem}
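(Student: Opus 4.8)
The plan is to transport the statement across the bijection $R\mapsto\Delta_R$ and to classify the structure categories $\Delta$ whose associated $R_\Delta$ is modelable. The first move is a functional reformulation of modelability. Taking $c=c_1\cdots c_n$ the canonical context and $v^1\cdots v^p=c_\theta$, a decomposition of the word $c_\theta$ into consecutive pieces corresponds to a partition of $\mathrm{dom}(\theta)$ into consecutive intervals $B_1,\dots,B_p$, and the contexts $c^1,\dots,c^p$ required by modelability exist exactly when $\theta$ admits a factorization $\theta=\psi\circ(\phi_1+\cdots+\phi_p)$ with all $\phi_i,\psi\in\Delta$ and with the restriction of $\psi$ to the block of its domain corresponding to $B_i$ injective (the injectivity encodes that each $c^i$ is a genuine context). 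So $R$ is modelable iff every $\theta\in\Delta_R$ admits, for every consecutive block partition of its domain, such a block-injective factorization through $\Delta_R$.

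Next I would pin down the fiber monoid $I$ of $\Delta:=\Delta_R$ (Lemma~\ref{structure category lemma}(1)). Whenever $k\in I$ the morphism $!^k\colon[k]\to[1]$ lies in $\Delta$ (Lemma~\ref{structure category lemma}(5)); applying the factorization criterion to $!^k$ with a two-block split $k=a+b$ forces each $c^i$ to be the single-letter context $(c_1)$, whence $a,b\in I$ and $2\in I$. Letting $a$ run over $1,\dots,k-1$ gives $\{1,\dots,k\}\subseteq I$ as soon as $k\in I$. A bounded $I$ containing some $k\ge 2$ would contain both $2$ and its maximum $K$, forcing $2K>K$ into $I$ by the structure-monoid axiom, a contradiction; hence, when $0\notin I$, either $I=\{1\}$ or $I=\N_{>0}$. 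When $0\in I$, Lemma~\ref{structure category lemma}(6) already confines $\Delta$ to $\Delta_{\{0,1\}},\Delta^{\{0,1\}}$ or $\Delta^{\N}$. Thus $I\in\{\{0,1\},\{1\},\N,\N_{>0}\}$, and for the first three values Lemma~\ref{structure category lemma}(3) and (6) leave precisely the five categories realizing the injective, strictly increasing, bijective, trivial and cartesian structures; for each of these a factorization through an image factorization works, so all five are modelable.

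The crux is $I=\N_{>0}$. If $\Delta$ has a non-identity bijection then $\Delta=\Delta^{\N_{>0}}$ by Lemma~\ref{structure category lemma}(3), the surjective structure, which is modelable. Otherwise I would first show a bijection-free structure category with $I=\N_{>0}$ is contained in $\Psi_{\N_{>0}}$ or in $\Psi^{\N_{>0}}$ (a map violating both the first- and the last-occurrence order generates, through a similarity component and a composite, a non-identity bijection), and then that modelability forces equality. The decisive computation is that the fold $c_1c_2c_1c_2\in\Delta_{\N_{>0}}$, split as $(c_1c_2c_1)\mid(c_2)$, can be factored only with $c^1=c_1c_2$ and $\phi_1$ the pattern $c_1c_2c_1$, which lies in $\Psi_{\N_{>0}}$ but in neither $\Delta_{\N_{>0}}$ nor $\Psi_{\N_{>0}}\cap\Psi^{\N_{>0}}$; the mirror split forces the pattern $c_2c_1c_2\in\Psi^{\N_{>0}}$. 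Consequently every proper substructure of $\Psi_{\N_{>0}}$ (resp. $\Psi^{\N_{>0}}$) fails the criterion, whereas $\Psi_{\N_{>0}}$, $\Psi^{\N_{>0}}$ and $\Delta^{\N_{>0}}$ satisfy it, yielding three modelable structures here and eight in total.

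I expect the last step to be the main obstacle: cleanly classifying the bijection-free structure categories with $I=\N_{>0}$ and proving that modelability isolates exactly $\Psi_{\N_{>0}}$ and $\Psi^{\N_{>0}}$ will require the containment dichotomy together with an induction on word length verifying the block-injective factorization, rather than the single fold witness above. The ``especially'' clause then needs no further machinery: for each of the eight surviving structures a terminal context of an $R$-word $v$ is exhibited directly---the distinct variables of $v$ in order of first appearance for the cartesian, injective, bijective and left/right-surjective cases, the word $v$ itself in the strictly increasing and trivial cases, and the full context in the surjective case---so every modelable $R$ admits terminal contexts for all $R$-words.
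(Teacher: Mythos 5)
Your reduction to structure categories is sound (the block-factorization reformulation of modelability is correct, and the cases $0\in I$, $I=\{1\}$, and the forcing $k\in I,\ k\ge 2 \Rightarrow I=\N_{>0}$ match the paper's use of Lemma \ref{structure category lemma}), and your treatment of the ``especially'' clause is fine. But the crux case $I=\N_{>0}$, no non-identity bijection, contains a genuine gap, and your intermediate containment dichotomy is in fact \emph{false} as stated. You claim every bijection-free structure category with $I=\N_{>0}$ lies inside $\Psi_{\N_{>0}}$ or $\Psi^{\N_{>0}}$, arguing that a single map violating both the first- and last-occurrence orders generates a bijection. That argument never addresses a category containing one map violating only the min-order and another violating only the max-order. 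Concretely, let $G$ be the structure category generated by $\Psi_{\N_{>0}}\cup\Psi^{\N_{>0}}$; it contains the pattern $xyx$ (not in $\Psi^{\N_{>0}}$) and the pattern $yxy$ (not in $\Psi_{\N_{>0}}$), yet it is bijection-free: the property ``for all $a_1<a_2$ in the codomain with singleton fibers, $f^{-1}(a_1)<f^{-1}(a_2)$'' holds for all of $\Psi_{\N_{>0}}$ and $\Psi^{\N_{>0}}$ and is preserved by composition (since $0\notin I$ makes every map surjective, a singleton fiber of $g\circ f$ forces singleton fibers of $g$ and $f$), by coproducts, and by similarity components, while the transposition violates it; since any non-identity bijection yields a transposition via a similarity component, $G$ contains none. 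So your dichotomy cannot be the route, and ruling out such intermediate categories like $G$ is exactly the hard part. Relatedly, your ``decisive computation'' is inaccurate: the split $(c_1c_2c_1)\mid(c_2)$ forces $c^1\in\{c_1c_2,\,c_2c_1\}$, a disjunction, not ``only $c^1=c_1c_2$''; with both patterns available (as in $G$) the fold witness produces no contradiction at all.

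The missing content is precisely what the paper proves by induction and what you explicitly defer: assuming modelability, $I=\N_{>0}$, $R$ not the surjective structure, and (after the disjunction above, WLOG) $xyRxyx$, one shows by induction on word length that every word $v$ has the left-surjective terminal context $\overline{v}$ as its \emph{unique} context. The paper's induction has two halves: $wRv\Rightarrow w=\overline{v}$, by writing $v=av'$ and using $aw'Raw'a$ to collapse the first repeated letter; and $\overline{v}Rv$, via the substitution $y\mapsto yx$ applied to the last letter $y$ of the first segment, together with the uniqueness claim that a balanced, non-surjective modelable $R$ gives each word at most one context (which in particular kills $xyRyxy$ and hence excludes $G$-like structures). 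Your proposal names ``an induction on word length verifying the block-injective factorization'' as the expected obstacle but supplies neither the induction nor a correct containment statement to frame it, so the classification in the case $I=\N_{>0}$ is not established.
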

\begin{proof}
    We have already seen eight different modelable context structures and each has a terminal context for each word. We show that there are no others. Assume that $R$ is a modelable context structure. Let $I$ be the set of multiplicities of variables in $R$-words. If $0\in I$, then we know by Lemma \ref{structure category lemma}(6), that $R$ is either the injective, strictly increasing or cartesian context structure, all of which are modelable context structures. Assume then that $0\not\in I$. If $I = \{1\}$, then $R$ is either the trivial or bijective context structure by Lemma \ref{structure category lemma}(3). Thus we may assume that $I\neq \{1\}$. This implies that $n\in I$ for some $n>1$. Hence $xRxx^{n-1}$, which implies that $xRxx$ by modelability of $R$ for some $x\in V$ and hence $2\in I$. Therefore $I = \N_{1}$. Notice that the surjective context structure $R^I$ is modelable; hence, we may assume that $R\neq R^I$. 

    We will show that $R$ is either the left or right surjective context structure. Let $x,y\in V$ be different elements. Notice that $xRxx$ and hence $xyRxyxy$ and thus $xyx$ is an $R$-word by the modelability of $R$. Thus $xyRxyx$ or $yxRxyx$. We will show that the first case implies that $R$ is the left surjective context structure and the latter case implies that $R$ is the right surjective context structure. Since the latter case is similar, we may assume that $xyRxyx$. Consider the set $S\subset \N$ consisting of $ n\in\N$, where each word $v$ of length at most $n$ holds that $wRv$ if and only if $\overline{v} = w$, where $\overline{v}$ is the left surjective terminal context of $v$. Notice that $\overline{v}$ contains the variables of $v$ with the latter repetitions of variables deleted. Since $R$ is balanced and not the surjective context structure, it holds that each word has at most a single $R$-context.  Hence it suffices to show that $S = \N$. Clearly $0\in S$. Assume that $n\in S$. We'll show that $n+1\in S$.
    
   Assume that $wRv$. We will show that $w = \overline{v}$. Now $v = av'$ for some $a\in V$ and $v'\in V^*$. If $v'$ is not a context, then $wRac$ for some context $c$ and $cRv'$ and thus $w = \overline{ac} = \overline{av} = \overline{v}$. Hence we may assume that $v'$ is a context of the form $w'aw''$ for some $w',w''\in V^*$. Now $wRaw'aw''$ and thus $wRcw''$ for $cRaw'a$. Since $xyRxyx$, it follows that $aw'Raw'a$ and thus $c = aw'$. Therefore $wRaw'w''$ and thus $w = aw'w'' = \overline{aw'aw''} = \overline{v}$.
    
    Next we show that $\overline{v}Rv$. Let $x$ be the last variable expressed in $\overline{v}$. Let $i$ be the index $x$ is expressed in $v$ for the first time. Now $v = v_1xv_2$ where the length of $v_1$ is $i-1$. Therefore $x$ is not expressed in $v_1$. If $v_1$ is the empty word then $v = x^{n+1}$, which is an $R$-word. Assume that $v_1$ is not empty. Now $v_1$ and $xv_2$ are $R$-words by induction hypothesis. If either $v_1$ or $xv_2$ is not a context, then $\overline{v} = \overline{\overline{v_1}\text{ }\overline{xv_2}}$ is an $R$-context for $v$. Thus we may assume words $v_1$ and $xv_2$ are contexts. We attain that $\overline{v} = v_1x$. Let $y$ be the last variable expressed in $v_1$ and consider the function $s\colon\text{Var}(v_1)\rightarrow V^*$, where $z\mapsto \begin{cases}
        z,\text{ if } z\not = y\\
        yx,\text{ if } z = y
    \end{cases}$.
    By the induction hypothesis, $v_1Rv_1v_2$. Thus $v_1x = s(v_1)Rs(v_1v_2) = v_1xs(v_2)$ and thus $v_1x R v_1\overline{xs(v_2)} = v_1xv_2 = v$. Therefore $\overline{v}Rv$.
\end{proof}
\begin{corollary}\label{Sets of contexts corollary}
        Let $R$ be a modelable context structure on an infinite set $V$. Denote by $C_v$ the set of contexts of a word $v\in V^*$. Then $C_{v^i}\subset C_{w^i}$ for $i\leq n$ implies that $C_{v^1\cdots v^n}\subset C_{w^1\cdots w^n}$ for words $v_i,w_i\in V^*,i\leq n$. Furthermore, $C_{v^1\cdots v^n} = C_{v^{\theta(1)}\cdots v^{\theta(m)}}$ for any surjection $\theta\colon[m]\rightarrow [n]$ in $\Delta_R$ and $v^i\in V^*, i\leq n$.
\end{corollary}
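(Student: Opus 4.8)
The first statement is a direct consequence of modelability together with the third defining condition of a context structure. Given $c\in C_{v^1\cdots v^n}$, i.e. $cRv^1\cdots v^n$, modelability produces contexts $c^1,\ldots,c^n$ with $cRc^1\cdots c^n$ and $c^iRv^i$ for each $i$. Thus $c^i\in C_{v^i}\subseteq C_{w^i}$, so $c^iRw^i$, and the third condition (applied to $cRc^1\cdots c^n$ together with $c^iRw^i$) yields $cRw^1\cdots w^n$; hence $c\in C_{w^1\cdots w^n}$. No surjectivity or choice of $\theta$ is involved here.

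For the second statement I would prove the two inclusions separately, the forward one being uniform and the reverse one carrying all the difficulty. The inclusion $C_{v^1\cdots v^n}\subseteq C_{v^{\theta(1)}\cdots v^{\theta(m)}}$ holds for every $\theta\in\Delta_R$ (surjectivity not needed): by definition of $\Delta_R$ we have $c_1\cdots c_nRc_{\theta(1)}\cdots c_{\theta(m)}$ for distinct letters $c_1,\ldots,c_n$, so applying the fourth condition to the substitution $s$ with $s(c_i)=v^i$ turns any relation $dRs(c_1\cdots c_n)=dRv^1\cdots v^n$ into $dRs(c_{\theta(1)}\cdots c_{\theta(m)})=dRv^{\theta(1)}\cdots v^{\theta(m)}$.

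The reverse inclusion $C_{v^{\theta(1)}\cdots v^{\theta(m)}}\subseteq C_{v^1\cdots v^n}$ is where surjectivity of $\theta$ enters, and it is the main obstacle. The clean case is when $\theta$ admits a section $\kappa\colon[n]\to[m]$, $\theta\kappa=\mathrm{id}$, lying in $\Delta_R$: then the same substitution argument, now applied to $c_1\cdots c_mRc_{\kappa(1)}\cdots c_{\kappa(n)}$ with $s(c_j)=v^{\theta(j)}$, converts $dRv^{\theta(1)}\cdots v^{\theta(m)}$ into $dRv^{\theta\kappa(1)}\cdots v^{\theta\kappa(n)}=dRv^1\cdots v^n$. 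By the classification of modelable context structures (Theorem \ref{characterizing modelable context-structures}) such a $\Delta_R$-section always exists for five of the eight structures: for the trivial and strictly increasing structures the only surjection in $\Delta_R$ is the identity; for the injective and bijective structures every surjection is a bijection whose inverse lies in $\Delta_R$; and for the cartesian structure $\Delta_R$ contains all functions and hence every set-theoretic section.

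The genuinely obstructed cases are the surjective, left-surjective and right-surjective structures: here $\Delta_R$ consists of surjections, contains no proper injection, and so no section exists, meaning there is no purely syntactic witness for the reverse inclusion. These three structures are balanced, and I would finish by invoking the explicit description of $C_v$ supplied by the classification. For the plain surjective structure $C_v$ depends only on $\mathrm{Var}(v)$, and since $\theta$ is surjective one has $\mathrm{Var}(v^{\theta(1)}\cdots v^{\theta(m)})=\bigcup_{i\le n}\mathrm{Var}(v^i)=\mathrm{Var}(v^1\cdots v^n)$, so the two context sets coincide. For the left- and right-surjective structures every $R$-word has a single context (the ordering of its variables by first, respectively last, appearance), so the inclusion already established between these singleton sets forces equality. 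The conceptual point, which makes this case split feel unavoidable, is that a surjective reindexing preserves exactly the data—variable content, or order of first/last occurrence—that $C_v$ records in these structures; the difficulty is precisely that one must fall back on the classification rather than on a uniform section-based argument.
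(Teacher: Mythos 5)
Your proof of the first claim is exactly the paper's: modelability extracts contexts $c^i$ with $cRc^1\cdots c^n$ and $c^iRv^i$, the hypothesis upgrades $c^i$ to a context of $w^i$, and the third context-structure axiom reassembles $cRw^1\cdots w^n$. For the second claim you take a genuinely different route. The paper case-splits along the classification as you do, but differently: for the trivial, strictly increasing, injective and bijective structures it notes that any surjection in $\Delta_R$ is an identity or a bijection, and for the remaining four it argues via terminal contexts, observing that $\overline{v^1\cdots v^n}=\overline{v^{\theta(1)}\cdots v^{\theta(m)}}$ and $C_u=C_{\overline{u}}$, which disposes of the cartesian, surjective, left- and right-surjective structures in one stroke (the paper labels these cases ``balanced,'' though the cartesian structure is not balanced; it is the terminal-context mechanism that actually carries the argument there). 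You instead split the equality into two inclusions: the forward inclusion you obtain uniformly from the fourth axiom applied to the defining relation $c_1\cdots c_n\,R\,c_{\theta(1)}\cdots c_{\theta(m)}$ of $\Delta_R$ --- a mild strengthening the paper never isolates, since it needs no surjectivity --- and the reverse inclusion via a $\Delta_R$-section $\kappa$ wherever one exists (five of the eight structures, including cartesian) and via the explicit descriptions of $C_v$ (dependence on $\mathrm{Var}(v)$ alone, or uniqueness of the context) for the surjective, left- and right-surjective structures. The paper's terminal-context argument buys compactness and uniformity across the four non-invertible cases; yours buys the surjectivity-free half of the statement and a purely syntactic witness (the section) wherever one exists, making transparent exactly where the appeal to Theorem \ref{characterizing modelable context-structures} is unavoidable. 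One nitpick: your assertion that no section exists in the three surjective structures is slightly overbroad, since a bijective $\theta$ in $\Delta^{\N_1}$ has its inverse as a section; this is harmless, as your variable-content and unique-context arguments cover all surjective $\theta$ in those cases anyway.
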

\begin{proof}
    Assume that $C_{v^i}\subset C_{w^i}$ for $v^i,w^i\in V^*$ and $i\leq n$. We show that $C_{v^1\cdots v^n}\subset C_{w^1\ldots w^n}$. Let $wRv^1\ldots v^n$. Since $R$ is modelable, we have that $wR\overline{v^1}\cdots \overline{v^n}$ where $\overline{v^i}$ is an $R$-context for $v^i$ for $i\leq n$. Since $\overline{v^i}\in C_{v^i}$ we have $\overline{v^i}\in C_{w^i}$ and so $\overline{v^i}Rw^i$. Since $wR\overline{v^1}\cdots\overline{v^n}$ and $\overline{v^i}Rw^i$, it follows that $wRw^1\ldots w^n$ and hence $w\in C_{w^1\cdots w^n}$. 

    Let $\theta\colon [m]\rightarrow [n]$ be a surjection in $\Delta_R$. Let $v^1,\ldots, v^n\in V^*$. We show that $C_{v^1\cdots v^n} = C_{v^{\theta(1)}\cdots v^{\theta(m)}}$. If $R$ is the strictly increasing, injective, trivial or bijective context structure, then the equality is clear, since $\theta$ is either an identity or a bijection. Assume then that $R$ is a balanced context structure. Thus each word of $V$ is an $R$-word by Theorem $\ref{characterizing modelable context-structures}$. Notice that 
    $$
    \overline{v^1\cdots v^n} = \overline{v^{\theta(1)}\cdots v^{\theta(m)}}
    $$
    and hence $C_{v^1\cdots v^n} = C_{\overline{v^1\cdots v^n}} = C_{v^{\theta(1)}\cdots v^{\theta(m)}}$. 
\end{proof}

\begin{definition}[R–Theory]
    Let $\sigma = (S,M,V)$ be a signature with $V$ having a context structure $R$. We say that a context $c\in V^*$ is an $R$-context for a term $t$, if $sR\tau(t)$. If $t_1$ and $t_2$ are $\sigma$-terms of type $s$ sharing a context $v$, then $t_1\approx_v t_2$ is called an $R$-equation. A set of $R$-equations is called an $R$-theory. 
\end{definition}

\begin{definition}[Renaming \& Substitution]
    Let $\sigma = (S,M,V)$ be a signature with a context structure $R$. Let $v = x_1\ldots x_n$ be a context. We say that a function $s\colon Var(v) = \{x_1,\ldots, x_n\}\ra$ Term is a renaming of variables in $v$, if $s$ preserves the types, i.e. $s(x)$ has the same type as $x$ for $x\in Var(v)$. Let $s$ be a renaming of variables in $v$. We define $\overline{s}\colon Term_v\ra Term$, where $$
    \overline{s}(t) = \begin{cases}
        s(t),\text{ if $t\in Var(t)$}\\
        t,\text{ if $t$ is a constant symbol}\\
        f(\overline{s}(t_1),\ldots, \overline{s}(t_n)),\text{ if $t = f(t_1,\ldots, t_n)$}
    \end{cases},
    $$
    where the set Term$_v$ consists of those terms $t$, where $Var(t)\subset Var(v).$ We will denote $\overline{s} = s$.
    
    Let $v = v_1\cdots v_n$ be a context. A triple $(s,(w_i)_{i\leq n},w)$ is called an $R$-renaming if $s$ is a renaming of variables in $v$ and the contexts $w$ and $w_i$ are $R$-contexts for $w_1\cdots w_n$ and $s(v_i)$, respectively, for $i\leq n$. 
\end{definition}
\begin{lemma}
    Let $\sigma = (S,M,V)$ be a signature with a context structure $R$. Assume that $t$ has an $R$-context $v = v_1\cdots v_n$, where $v_i\in V$ for $i\leq n$. Assume that $(s,w,(w_i)_i)$ is an $R$-renaming of variables in $v$. Then $w$ is an $R$-context for $s(t)$.
\end{lemma}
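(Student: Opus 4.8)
The plan is to reduce the statement to the third and fourth axioms of a context structure, with the only genuine work being a substitution identity relating $\tau$ and $s$. First I would unwind the definitions. Saying ``$v$ is an $R$-context for $t$'' means $v\,R\,\tau(t)$; saying ``$w_i$ is an $R$-context for $s(v_i)$'' means $w_i\,R\,\tau(s(v_i))$; saying ``$w$ is an $R$-context for $w_1\cdots w_n$'' means $w\,R\,w_1\cdots w_n$; and the goal ``$w$ is an $R$-context for $s(t)$'' is the assertion $w\,R\,\tau(s(t))$. Note also that $s(t) = \overline{s}(t)$ is defined precisely because $\text{Var}(t)\subseteq \text{Var}(v)$, which holds by the second axiom applied to $v\,R\,\tau(t)$. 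Since each $v_i\in V$ is a single letter, the term-renaming $s$ induces a \emph{word} substitution $r\colon \text{Var}(v)\to V^*$ by $r(x)=\tau(s(x))$.

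The key preliminary step I would establish is the identity $\tau(s(t)) = r(\tau(t))$. Writing $\tau(t)=x_1\cdots x_k$, so that each $x_j\in\text{Var}(t)\subseteq\text{Var}(v)=\mathrm{dom}(r)$, this identity reads
$$
\tau(s(t)) = \tau(s(x_1))\cdots \tau(s(x_k)).
$$
I would prove it by structural induction on $t$, matching the recursive clauses defining $\overline{s}$ and $\tau$: it is immediate for variables and constants, and for $t=f(t_1,\dots,t_m)$ both sides split as the concatenation of the corresponding words for $t_1,\dots,t_m$, using $s(t)=f(s(t_1),\dots,s(t_m))$ and $\tau(f(t_1,\dots,t_m))=\tau(t_1)\cdots\tau(t_m)$.

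With this in place, the conclusion follows by composing two axioms. Applying the third axiom of a context structure to $w\,R\,w_1\cdots w_n$ together with $w_i\,R\,\tau(s(v_i))$ for $i\le n$ gives
$$
w\,R\,\tau(s(v_1))\cdots \tau(s(v_n)) = r(v).
$$
Then applying the fourth axiom with the context $v$, the word $\tau(t)$, the substitution $r$, and the context $w$ (for which $w\,R\,r(v)$ was just obtained) yields $w\,R\,r(\tau(t))$. By the identity above $r(\tau(t))=\tau(s(t))$, hence $w\,R\,\tau(s(t))$, which is exactly the claim that $w$ is an $R$-context for $s(t)$.

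I expect the main obstacle to be purely bookkeeping: deriving the substitution identity $\tau(s(t))=r(\tau(t))$ cleanly, and checking that the hypotheses slot into the premises of the third and fourth axioms in the correct roles---which word plays the context and which the substituted word, and that $\text{Var}(\tau(t))\subseteq\text{Var}(v)=\mathrm{dom}(r)$ so that the fourth axiom even applies. Once these correspondences are fixed, no further computation is required; the lemma is essentially the statement that the two closure axioms compose correctly along the renaming $r$.
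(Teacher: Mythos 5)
Your proposal is correct and matches the paper's proof essentially step for step: the paper likewise proves by induction that $\tau(s(t)) = \tau(s(v_{\theta(1)}))\cdots\tau(s(v_{\theta(m)}))$ (your identity $\tau(s(t)) = r(\tau(t))$), then applies the third condition to $w\,R\,w_1\cdots w_n$ and $w_i\,R\,\tau(s(v_i))$, followed by the fourth condition with the induced word substitution. Your write-up is merely more explicit about the bookkeeping (naming $r$, noting where the second axiom guarantees $\mathrm{Var}(t)\subseteq\mathrm{Var}(v)$), which the paper leaves implicit.
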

\begin{proof}
    By induction we see that $\tau(s(t)) = \tau(s(v_{\theta(1)}))\cdots\tau(s(v_{\theta(m)}))$ for the $\theta\colon[m]\rightarrow [n]$ where $\tau(t) = v_{\theta(1)}\cdots v_{\theta(m)}$. Since we have $wRw_1\cdots w_n$ and $w_iR\tau(s(v_i))$ for $i\leq n$, it follows by the third and fourth conditions of context structures, respectively, that $w R \tau(s(v_1))\cdots \tau(s(v_n))$ and $wR\tau(s(v_{\theta(1)}))\cdots \tau(s(v_{\theta(m)})) = \tau(s(t))$.
\end{proof}

\begin{definition}[Syntactic deduction]
    Let $\sigma$ be a signature with a context structure $R$. Let $E$ be an $R$-theory. We define the set $D_E^R = D_E = D$ of $R$-deduced equations from $E$ as the smallest set of $R$-equations satisfying the conditions:
    \begin{itemize}
        \item $E\subset D$.
        
        \item $t\approx_v t\in D$ for any term $t$ in $R$-context $v$.
        
        \item If $t\approx_v t'\in D$, then $t'\approx_v t\in D_E$.
        
        \item If $t_1\approx_v t_2, t_2\approx_v t_3\in D$, then $t_1\approx_v t_3\in D$.
        \item Let $t_1\approx_v t_2\in D$, where $v = v_1\cdots v_n$. Let $(s_1,w,(w_i)_{i\leq n})$ and $(s_2,w,(w_i)_{i\leq n})$ be $R$-renamings of $v$ such that $s_1(v_i)\approx_{w_i} s_2(v_i)\in D_E$ for $i\leq n$. Then $s_1(t_1)\approx_w s_2(t_2)\in D$. 
    \end{itemize}
    We denote $E\vdash_R t_1\approx_v t_2$ if and only if $t_1\approx_v t_2\in D$. We say that a context structure $R$ is $\sigma$-deductively complete if for all $R,\sigma$-theories $E\cup\{\phi\}$ it holds that $E\vdash_R \phi$ if and only if $E\vdash_\text{Cart}\phi$, where Cart is the maximal context structure.
\end{definition}

\subsection{Soundness}
For a given signature $\sigma$ with a modelable context structure $R$, we define models of $R$-theories in any given $\Delta_R$-multicategory. We show that the deductions system $\vdash_R$ is sound and complete with respect to all models in $\Delta_R$-multicategories. By soundness and completeness, we mean the inclusions $D^R_E\subset\bigcap_{m}\text{True}(m)$ and $\bigcap_m\text{True}(m)\subset D_E^R$, respectively, where the indexing is over all $\sigma$-model $m$ in a $\Delta_R$-multicategory $C$. Here $D^R_E$ means all the equations deduced from $E$ and $\text{True}(m)$ is the set of all equations true in $m$. Later we see that completeness is attained for all eight different cases for modelable $R$, but $\textbf{Set}$-completeness is attained for balanced-complete $R$ and the cartesian context structures $R$ (six cases).
\begin{definition}[Model]
    Let $\sigma = (S,M,V)$ be a signature. Let $C$ be a multi-category. A $\sigma$-model in $C$ is an association $m$, where
    \begin{itemize}
        \item $m(s)$ is an object of $C$ for each sort $s\in S$. For $a = a_1\cdots a_n\in S^*$, where $a_i\in S$ for $i\leq n$, we define $m(a) = m(a_1)\cdots m(a_n).$
        \item $m(f)\colon m(a)\ra m(b)$ is a multi-morphism in $C$ for each morphism symbol $f\colon a\rightarrow b\in M$. 
    \end{itemize}
    A morphism of models $m\ra n$ consists of a family $f$ of functions $f(s)\colon m(s)\ra n(s)$ for each sort $s$ and where the diagram
    $$
    \begin{tikzcd}
m(a) \arrow[r, "m(\alpha)"] \arrow[d, "f(a)"'] & m(b) \arrow[d, "f(b)"] \\
n(a) \arrow[r, "n(\alpha)"']                   & n(b)                  
\end{tikzcd}
    $$
    commutes for each morphism symbol $\alpha\colon a\ra b$. Here we understand the commutativity of the diagram to mean that $f(b)\circ m(\alpha) = n(\alpha)\circ f(a)$, where $f(a) = (f(a_1),\ldots, f(a_n))$ for $a = a_1\cdots a_n$ and $a_i\in S,i\leq n$.
\end{definition}

\begin{definition}[Canonical morphisms]
    Let $\sigma = (S,M,V)$ be a signature with a context structure $R$. Let $m$ be a $\sigma$-model in a $\Delta_R$–multicategory $C$. Let $v_1,\ldots v_n\in V$ and $w = v_{\theta(1)}\cdots v_{\theta(m)}$ for a function $\theta\colon[m]\rightarrow [n]$ in $\Delta_R$.  We set $m_v = m_{v_1}\cdots m_{v_n}$ and $m_{v_i} = m(a_i)$ for $v_i\colon a_i$ and $i\leq n$. We define a function $m_{v,w,b}\colon C(m_w,b)\rightarrow C(m_v,b)$ as $\theta^*_{m_v,b}$. Instead of writing $m_{v,w,b}(f)$ for a multimorphism $f\colon m_w\rightarrow b$ we use the notation $f*m_{v,w}\colon m_v\rightarrow b$ for the sake of convenience. Even though the function $m_{v,w}$ is not a morphism in $C$, it behaves very similarly to a canonical morphism constructed from the natural transformations in a monoidal category and so we choose to call $m_{v,w}$ a canonical morphism.  
\end{definition}
\begin{lemma}\label{Properities of canonical morphisms}
    Let $m$ be a $\sigma$-model in a $\Delta_R$–multicategory $C$ where $R$ is a context structure on the variables of $\sigma$. Then the following assertions hold:
    \begin{enumerate}
        \item Let $v$ be a context. Then $f*m_{v,v} = f$ for any $f\colon m_v\rightarrow b$ in $C$.
        
        \item Let $cRv$ and $vRw$. Then $(f*m_{v,w})*m_{c,v} = f*m_{c,w}$ for any $f\colon m_w\rightarrow b$. 
        
        \item Let $v_iRw_i$ for $i\leq n$ and assume that $cRv_1\cdots v_n$. Let $f_i\colon m_{w_i}\rightarrow b_i$ for $i\leq n$ and let $g\colon b_1\cdots b_n\rightarrow c$ be multimorphisms in $C$. Then
        $$
        g\circ (f_1*m_{v_1,w_1},\ldots, f_n*m_{v_n,w_n})*m_{c,v_1\cdots v_n} = (g\circ (f_1,\ldots, f_n))*m_{c,w_1\cdots w_n}
        $$
        \item \label{property 4} Let $v_1,\ldots, v_n\in V$ and $w,v^1,\ldots, v^n\in V^*$ where $w$ is of length $m$. Assume $v = v_1\cdots v_n$ is a context. Fix functions $f\colon m_{v^i}\rightarrow m_{v_i}$ for $i\leq n$ and $g\colon w\rightarrow b$. We assume $vRw$. Denote the function $\theta\colon [m]\rightarrow [n]$ where $w = v_{\theta(1)}\cdots v_{\theta(m)}$. Then 
        $$
        (g\circ (f_{\theta(1)},\ldots, f_{\theta(m)}))*m_{c,v^{\theta(1)}\cdots v^{\theta(m)}}= ((g*m_{v,w})\circ (f_1,\ldots, f_n)) *m_{c,v^1\cdots v^n}
        $$
    \end{enumerate}
\end{lemma}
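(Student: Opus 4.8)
The plan is to recognise each of the four assertions as a direct transcription of one of the four $\Delta$-action axioms into canonical-morphism notation, the last two additionally invoking assertion (2) to collapse a composite of two canonical morphisms into one. Throughout I use that for a context $v=v_1\cdots v_n$ and an $R$-word $w=v_{\theta(1)}\cdots v_{\theta(m)}$ the relation $vRw$ is equivalent to $\theta\in\Delta_R$, so $m_{v,w}$ is exactly the action $\theta_{m_v,b,*}\colon C(m_w,b)\to C(m_v,b)$ with $m_w=(m_v)_\theta$. For (1), $w=v$ forces $\theta=\mathrm{id}_{[n]}$, so $f*m_{v,v}=(\mathrm{id}_{[n]})_{m_v,b,*}(f)=f$ by the first action axiom. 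For (2), write $v=c_\sigma$ and $w=v_\tau$ with $\sigma,\tau\in\Delta_R$, whence $w=c_{\sigma\tau}$ and $m_v=(m_c)_\sigma$; then the second action axiom $(\sigma\tau)_{m_c,b,*}=\sigma_{m_c,b,*}\circ\tau_{m_v,b,*}$ is precisely $(f*m_{v,w})*m_{c,v}=f*m_{c,w}$, the functoriality of $\theta\mapsto\theta_{*}$ that feeds (3) and (4).

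For assertion (3), choose $\phi^i\in\Delta_R$ with $w_i=(v_i)_{\phi^i}$, so that $f_i*m_{v_i,w_i}=(\phi^i)_{m_{v_i},b_i,*}(f_i)$. Applying the third action axiom to $g$ and the $f_i$ gives
\[
g\circ(f_1*m_{v_1,w_1},\ldots,f_n*m_{v_n,w_n}) = (g\circ(f_1,\ldots,f_n))*m_{v_1\cdots v_n,\,w_1\cdots w_n},
\]
the outer canonical morphism being the action of $\phi^1+\cdots+\phi^n$, which lies in $\Delta_R$ by coproduct closure. Post-composing both sides with $*\,m_{c,v_1\cdots v_n}$ and applying (2) — using $cRv_1\cdots v_n$ and $v_1\cdots v_nRw_1\cdots w_n$, the latter from conditions (1) and (3) of a context structure — collapses the right-hand side to $(g\circ(f_1,\ldots,f_n))*m_{c,w_1\cdots w_n}$, which is the claim.

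For assertion (4), with $\theta\colon[m]\to[n]$ the reindexing of $vRw$, the fourth action axiom applied to $g*m_{v,w}=\theta_{m_v,b,*}(g)$ and $(f_1,\ldots,f_n)$ reads
\[
(g*m_{v,w})\circ(f_1,\ldots,f_n) = (g\circ(f_{\theta(1)},\ldots,f_{\theta(m)}))*m_{v^1\cdots v^n,\,v^{\theta(1)}\cdots v^{\theta(m)}},
\]
the right-hand canonical morphism being the action of the similarity component $\theta'_{k_1,\ldots,k_n}$ with $k_i=l(v^i)$, which lies in $\Delta_R$ by similarity closure. Post-composing with $*\,m_{c,v^1\cdots v^n}$ and applying (2), with $cRv^1\cdots v^n$ and $v^1\cdots v^nRv^{\theta(1)}\cdots v^{\theta(m)}$ (the latter because $\theta\in\Delta_R$ carries its similarity into $\Delta_R$), yields the stated equation.

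The only genuine difficulty is index bookkeeping: matching the domains, the directions of the reindexing functions, and the arities $k_i$ between the abstract action axioms (stated for arbitrary object words $a^i\in S^*$) and the model-indexed words $m_{v^i}$, and checking in each case that the assembled reindexing function — $\phi^1+\cdots+\phi^n$ in (3) and the similarity component $\theta'$ in (4) — lands in $\Delta_R$. This is exactly what the coproduct- and similarity-closure axioms of a structure category supply, while the accompanying $R$-relations needed for (2) to apply follow from the defining conditions (1) and (3) of a context structure.
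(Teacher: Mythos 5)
Your proposal is correct and takes essentially the same approach as the paper: the paper likewise reads each assertion off the corresponding $\Delta$-action axiom (writing out only part (4)), and its displayed computation for (4) — apply the fourth axiom to $\theta_{m_v,*}(g)\circ(f_1,\ldots,f_n)$, then merge $\phi_{m_c,*}\circ(\theta'_{k_1,\ldots,k_n})_*$ into $(\phi\circ\theta'_{k_1,\ldots,k_n})_{m_c,*}$ via the composition axiom — is precisely your route, since your invocation of assertion (2) is that composition axiom in canonical-morphism notation. Your added bookkeeping (coproduct and similarity closure of $\Delta_R$, and conditions (1) and (3) of a context structure supplying the needed $R$-relations) is implicit in the paper's "all parts follow directly" remark, so there is no substantive difference.
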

\begin{proof}
    All parts follow directly from the corresponding axiom of $\Delta_R$-action.  So we will show only the last assertion:
    Denote the length of $v^i$ by $k_i\in\N$. Now $m_{v,w} = \theta_{m_v,*}$ and $m_{c,v^1\cdots v^n} = \phi_*$ for some $\theta\colon [m]\rightarrow [n]$ and $\phi\colon[k_1+\ldots + k_n]\rightarrow [l]$, where $m,n$ and $l$ are the lengths of $w,v$ and $c$, respectively. Consider the following:
    \begin{align*}
        (g*m_{v,w})\circ (f_1,\ldots, f_n)*m_{c,v^1\cdots v^n}
        &= \phi_{m_c,*}(\theta_{m_v,*}(g)\circ (f_1,\ldots, f_n))\\
        &= \phi_{m_c,*}((\theta'_{k_1,\ldots, k_n})_{v^1\cdots v^n,*}(g\circ (f_{\theta(1)},\ldots, f_{\theta(m)})))\\
        &=(\phi\circ\theta'_{k_1,\ldots, k_n})_{m_c,*}(g\circ(f_{\theta(1)},\ldots, f_{\theta(m)}))\\
        &= g\circ (f_{\theta(1)},\ldots, f_{\theta(m))})*m_{c,v^{\theta(1)}\cdots v^{\theta(m)}}.
    \end{align*}
\end{proof}

\begin{definition}[Satisfyiability]
Let $\sigma$ be a signature with a modelable context structure $R$. Let $m$ be a $\sigma$-model in a $\Delta_R$–multicategory $C$. Let $t$ be a term with an $R$-context $v$. We denote $m_v = m(s_1)\cdots m(s_n)$ where $v = x_1\ldots x_n$ and $x_i\colon s_i$ for $i\leq n$. We are ready to define the multi-morphism $m_v(t)\colon m_v\rightarrow m(b)$ for a term $t\colon b$ with an $R$-context $v$:
$$
m_v(t)= \begin{cases}
     id_b*m_{v,v_i},\text{ if $t = v_i$},\\
    m(c)*m_{v,()}, \text{ if $t$ is a constant symbol $c$},\\
    (m(f)\circ (m_{w_1}(t_1),\ldots, m_{w_k}(t_k)))*m_{v,w_1\cdots w_k}, \text{ if } t = f(t_1,\ldots, t_k),
\end{cases}
$$
where $w_i$ is an $R$-context for $t_i,i\leq n$ and $vRw_1\cdots w_n$.

The model $m$ is said to satisfy an $R$-equation $t_1\approx_v t_2$ if $m_v(t_1) = m_v(t_2)$. This is denoted $m\vDash t_1\approx_v t_2$. We say that $m$ satisfies $R$-theory $E$ if $m$ satisfies all the equations in $E$ and we denote this by $m\vDash E$.
\end{definition}

We defined $m_v(f(t_1,\ldots, t_n)) = m(f)(m_{w_1}(t_1),\cdots m_{w_n}(t_n))*m_{v,w_1\cdots w_n}$ where $vRw_1\cdots w_n$ and $w_iR\tau(t_i),i\leq n$. Notice that some contexts $w_1,\ldots, w_n$ satisfying the conditions do exist, since $R$ is a modelable context structure. Furthermore, this definition is independent of the choice of the contexts, since we have assumed the existence of terminal contexts for modelable context structures $R$. In the following lemma, we show the independence in more detail:

\begin{lemma}\label{for soundness}
    Let $\sigma$ be a signature with a modelable context structure $R$. Let $m$ and be a $\sigma$-model in a $\Delta_R$-mulitcategory $C$. Then the following holds
    \begin{enumerate}
        \item Let $t\colon b$ be a term with $R$-contexts $v$ and $w$ where $vRw$. The multimorphism $m_v(t)\colon m_v\rightarrow m(b)$ is well defined and $m_w(t)*m_{v,w} = m_v(t).$
        
        \item Let $v,w$ be $R$-contexts for terms $t_1$ and $t_2$ and assume that $vRw$. Then $m_w(t_1) = m_w(t_2)$ implies that $m_v(t_1) = m_v(t_2)$. 
        
        \item Let $t\colon b$ be a term with an $R$-context $v$. Assume that $(s,w,(w_i)_{i\leq n})$ be an $R$-renaming of variables in $v$. Then
        $$
        m_w(s(t)) = m_v(t)\circ (m_{w_1}(s(v_1)),\ldots, m_{w_n}(s(v_n)))* m_{w,w_1\ldots w_n}
        $$
        where $w_i = \tau(s(v_i))$ for $i\leq n$
    \end{enumerate}
\end{lemma}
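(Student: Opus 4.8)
The plan is to prove the three parts together by induction on the structure of the term, with part~2 falling out as an immediate formal consequence of part~1. The workhorse throughout is Lemma~\ref{Properities of canonical morphisms}, whose four identities for canonical morphisms encode precisely the associativity, unit and $\Delta_R$-action compatibilities that the interpretation $m_v(-)$ must respect. Modelability enters only through the existence of terminal $R$-contexts guaranteed by Theorem~\ref{characterizing modelable context-structures}, which is what makes the recursive definition of $m_v(f(t_1,\dots,t_k))$ insensitive to the auxiliary choices of subcontexts.

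For part~1 I would induct on $t$, establishing well-definedness of $m_v(t)$ and the coherence identity $m_w(t)*m_{v,w}=m_v(t)$ (for $vRw$) simultaneously. In the base cases $t=v_i$ and $t=c$ there is nothing to choose, and coherence is a single application of Lemma~\ref{Properities of canonical morphisms}(2): writing $m_w(v_i)*m_{v,w}=(id_b*m_{w,v_i})*m_{v,w}$ and instantiating that identity with $vRw$ and $wRv_i$ collapses it to $id_b*m_{v,v_i}=m_v(v_i)$, and likewise for constants. For $t=f(t_1,\dots,t_k)$, well-definedness means independence from the admissible subcontexts $w_1,\dots,w_k$: fixing terminal contexts $\overline{t_i}$, every admissible choice satisfies $w_iR\overline{t_i}$, so the induction hypothesis gives $m_{w_i}(t_i)=m_{\overline{t_i}}(t_i)*m_{w_i,\overline{t_i}}$, and pushing these canonical morphisms outward by Lemma~\ref{Properities of canonical morphisms}(3) collapses every choice to the single terminal expression built from $m_{v,\overline{t_1}\cdots\overline{t_k}}$, where $vR\overline{t_1}\cdots\overline{t_k}$ follows from the third context-structure axiom. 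The coherence identity then follows by using modelability to pick subcontexts $u_i$ valid simultaneously for $v$ and $w$ (legitimate since $vRw$ and $wRu_1\cdots u_k$ yield $vRu_1\cdots u_k$ by the third axiom) and applying Lemma~\ref{Properities of canonical morphisms}(2) to the common expression.

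Part~2 is then immediate: for $vRw$, part~1 gives $m_v(t_i)=m_w(t_i)*m_{v,w}$ for $i=1,2$, so $m_w(t_1)=m_w(t_2)$ forces $m_v(t_1)=m_v(t_2)$. For the substitution identity in part~3 I would again induct on $t$. The base case $t=v_j$ reduces, after one application of the reindexing identity Lemma~\ref{Properities of canonical morphisms}(\ref{property 4}) with $g=id$ and $\theta\colon[1]\to[n]$ selecting the index $j$, to the assertion $m_w(s(v_j))=m_{w_j}(s(v_j))*m_{w,w_j}$, which is exactly the coherence statement of part~1 once the relation $wRw_j$ is in hand. In the step case I use $s(f(t_1,\dots,t_k))=f(s(t_1),\dots,s(t_k))$, restrict the $R$-renaming datum $(s,w,(w_i))$ to the variables occurring in each subterm $t_l$, apply the induction hypothesis to each $t_l$, and reassemble the pieces using Lemma~\ref{Properities of canonical morphisms}(3) together with the coherence from part~1.

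The step I expect to be the main obstacle is the bookkeeping in part~3: splitting $(s,w,(w_i))$ into genuine $R$-renamings for the subterms so the induction hypothesis applies, and tracking the several reindexings so that the separate canonical morphisms recombine exactly into $m_{w,w_1\cdots w_n}$. A subtler point is the auxiliary relation $wRw_j$ used in the variable case and its analogue inside the step, which is not one of the context-structure axioms verbatim; I would derive it from $wRw_1\cdots w_n$ by combining modelability with the third and fourth axioms, or else route the argument through the common terminal context $\overline{s(v_j)}$ so that only relations already established are invoked.
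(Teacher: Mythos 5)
Your proposal matches the paper's proof essentially step for step: the same simultaneous induction for part 1 (base cases via Lemma \ref{Properities of canonical morphisms}(2), the inductive step collapsing all admissible subcontexts onto terminal ones via Lemma \ref{Properities of canonical morphisms}(3)), part 2 as an immediate formal corollary, and the same induction for part 3, restricting the renaming to the subterms' (terminal) contexts and recombining via Lemma \ref{Properities of canonical morphisms}(3) and (\ref{property 4}). The point you flag as subtle, $wRw_j$, does hold and in fact follows from the fourth context-structure axiom alone, applied to the renaming $v_l \mapsto w_l$ together with $vRv_j$ and $wRw_1\cdots w_n$, so no appeal to modelability is needed there.
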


\begin{proof}\noindent
    \begin{enumerate}
        \item We can prove both claims simultaneously via induction:
        The claim clearly holds for constant and variable terms $t$. Assume that $t = f(t_1,\ldots, t_n)$ and the claim holds for $t_i,i\leq n$. Let $p_i$ be an $R$-terminal context for $t_i$ for $i\leq n$. Notice then that
        \begin{align*}
            &m(f)\circ (m_{v_1}(t_1),\ldots, m_{v_n}(t_n))*m_{v,v_1\cdots v_n}\\
            &= m(f) \circ (m_{p_1}(t_1)*m_{v_1,p_1},\ldots, m_{p_n}(t_n)*m_{v_n,p_n})*m_{v,v_1\cdots v_n}\\
            &= m(f)\circ (m_{p_1}(t_1),\cdots, m_{p_n}(t_n))*m_{v,p_1\ldots p_n}
        \end{align*}
        Since the morphism $m_v(t)$ is independent of the choice of contexts $v_1,\ldots, v_n$, it follows that the morphism $m_v(t)$ is uniquely defined. In the same induction, we notice that
        \begin{align*}
        &m_w(t)*m_{v,w}\\ 
        &= m(f)\circ (m_{p_1}(t_1),\ldots m_{p_n}(t_n))*m_{w,p_1\cdots p_n})*m_{v,w}\\
        &= m(f)\circ (m_{p_1}(t_1),\ldots, m_{p_n}(t_n))*m_{v,p_1\cdots p_n}\\
        &= m_w(t).
        \end{align*}

        \item Assume that $m_w(t_1) = m_w(t_2)$. Now
        \begin{align*}
            m_v(t_1) 
            &= m_w(t_1)*m_{v,w}\\ 
            &= m_w(t_2)*m_{v,w}\\ 
            &= m_w(t_2).
        \end{align*}
        \item We will show by induction on the structure of $t$ that
        $$
        m_w(s(t)) = m_v(t)\circ (m_{w_1}(s(v_1)),\ldots, m_{w_n}(s(v_n)))* m_{w,w_1\ldots w_n}.
        $$
        If $t = c$ is a constant, then the left-hand side is $m_w(s(c)) = m(c)*m_{w,()}$ and the right-hand side is 
        \begin{align*}
        m_v(c)\circ (m_{w_1}(s(v_1)),\ldots, m_{w_n}(s(v_n)))
        &= !_{(),b,*}(m(c))\circ (m_{w_1}(s(v_1)),\ldots, m_{w_n}(s(v_n)))\\
        &=!'_{(),b,*}(m(c))\\
        &=m(c)*m_{w,()}
        \end{align*}
        Assume that $t = v_i$. The left-hand side becomes $m_w(s(t)) = m_{w}(s(v_i))$. Denote the function $1\mapsto i\colon [1]\rightarrow [n]$ as $\phi$. Now the right-hand side is
        \begin{align*}
            &m_v(v_i)\circ (m_{w_1}(s(v_1)),\ldots, m_{w_n}(s(v_n)))*m_{w,w_1\cdots w_n}\\
            &= (id*m_{v,v_i})\circ (m_{w_1}(s(v_1)),\ldots, m_{w_n}(s(v_n)))*m_{w,w_1\cdots w_n}\\
            &= (id\circ m_{w_i}(s(v_i)))*m_{w,w_i}\\ 
            &= m_w(s(v_i)).
        \end{align*}
        Lastly, assume that $t = f(t_1,\ldots, t_k)$. Since $v$ is a context for $t$, it follows from the assumption that $R$ is a modelable context structure that $vRv^1\cdots v^k$ where $v^i = v^i_1\cdots v^i_{\alpha_i}, v^i_{\alpha_i}\in V,i\leq k$, is an $R$-terminal context of $t_i$ for $i\leq k$. We notate $w^i_j = w_{j'}$ for the unique $j'$ where $v^i_j = v_{j'}$ for $i\leq k$ and $j\leq \alpha_i$. Notice that since $vRv^1_1\cdots v^k_{\alpha_k}$ and $wRw_1\cdots w_n$, it follows that $wRw^1_1\cdots w^k_{m_k}$. Thus there exists an $R$-terminal context $w^i$ for $w^i_1\cdots w^i_{\alpha_i}$ for $i\leq k$ where $wRw^1\cdots w^k$. Thus restrictions of $s$ define $R$-renamings of variables $(s\mid, (w^i_j)_{j\leq \alpha_i},w^i)$ for variables in $v^i$. Thus we may apply the induction hypothesis on terms $t_i$ for $i\leq n$. Now
        \begin{align*}
            &m_w(s(f(t_1,\ldots, t_k))) \\
            &= m_w(f(s(t_1),\ldots, s(t_k)))\\
            &= m(f)\circ (m_{w^1}(s(t_1)),\ldots, m_{w^k}(s(t_k)))*m_{w,w^1\cdots w^k}\\
            &= m(f)\circ (m_{v^1}(t_1)\circ (m_{w^1_1}(s(v_1^1)),\cdots, m_{w^1_{\alpha_1}}(s(v^1_{\alpha_1})))*m_{w^1,w^1_1\cdots
            w^1_{\alpha_1}},\ldots ,\\
            &m_{v^k}(t_k)\circ (m_{w^k_1}(s(v^k_1)),\ldots, m_{w^k_{\alpha_k}}(s(v_{\alpha_k}^k)))*m_{w^k,w^k_1\cdots w^k_{\alpha_k}})*m_{w,w^1\cdots w^k}\\
            &=m(f)\circ (m_{v^1}(t_1)\circ (m_{w^1_1}(s(v_1^1)),\cdots, m_{w^1_{\alpha_1}}(s(v^1_{\alpha_1}))),\ldots ,\\
            &m_{v^k}(t_k)\circ (m_{w^k_1}(s(v^k_1)),\ldots, m_{w^k_{\alpha_k}}(s(v_{\alpha_k}^k))))*m_{w,w^1_1\cdots w^k_{\alpha_k}}\\
            &= (m(f)\circ (m_{v^1}(t_1),\ldots, m_{v^k}(t_k)))\circ (m_{w^1_1}(s(v_1^1)),\ldots, m_{w^k_{\alpha_k}}(s(v_{\alpha_k}^k)))*m_{w,w^1_1\ldots w^k_{\alpha_k}}\\
            &= (m(f)\circ (m_{v^1}(t_1),\ldots, m_{v^k}(t_k))*m_{v,v^1\ldots v^k})\circ (m_{w_1}(s(v_1)),\ldots, m_{w_n}(s(v_n)))*m_{w,w_1\cdots w_n}\\
            &= m_v(t)\circ (m_{w_1}(s(v_1)),\ldots, m_{w_n}(s(v_n)))*m_{w,w_1\cdots w_n}.
        \end{align*}
        The second to last equation follows from the Lemma \ref{Properities of canonical morphisms} (\ref{property 4}).
    \end{enumerate}
\end{proof}

\begin{theorem}[Soundness]\label{Soundness theorem}
Let $\sigma$ be a signature with a modelable context structure $R$. Consider the associated structure category $\Delta = \Delta_R$. Let $E\cup\{\phi\}$ be an $R$-theory. If $E\vdash_R\phi$, then $E\vDash_C\phi$ for any $\Delta$-multicategory $C$. 
\end{theorem}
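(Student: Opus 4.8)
The plan is to exploit the inductive (smallest-set) definition of $D_E^R$. First I would fix an arbitrary $\Delta$-multicategory $C$ together with a $\sigma$-model $m$ in $C$ satisfying $m \vDash E$, and let $\text{True}(m)$ denote the set of all $R$-equations $t_1 \approx_v t_2$ for which $m_v(t_1) = m_v(t_2)$. This set is well defined because Lemma \ref{for soundness}(1) guarantees that each $m_v(t)$ is a uniquely determined multimorphism $m_v \to m(b)$. Since $D_E^R$ is by definition the smallest set of $R$-equations containing $E$ and closed under the five deduction rules, it suffices to check that $\text{True}(m)$ contains $E$ and is itself closed under each rule; minimality then forces $D_E^R \subseteq \text{True}(m)$. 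Consequently $E \vdash_R \phi$ gives $\phi \in \text{True}(m)$, i.e. $m \vDash \phi$, and since $m$ and $C$ were arbitrary this yields $E \vDash_C \phi$.

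The first four closure conditions I expect to be immediate. The inclusion $E \subseteq \text{True}(m)$ is precisely the hypothesis $m \vDash E$; reflexivity $t \approx_v t$, symmetry, and transitivity hold because equality of the morphisms $m_v(t)$ inside a single hom-set of $C$ is a reflexive, symmetric and transitive relation. No multicategorical structure is needed for these.

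The only substantive step is the congruence/substitution rule, and here I would invoke Lemma \ref{for soundness}(3). Suppose $t_1 \approx_v t_2 \in \text{True}(m)$ with $v = v_1 \cdots v_n$, and let $(s_1, w, (w_i)_{i \le n})$ and $(s_2, w, (w_i)_{i \le n})$ be $R$-renamings with $s_1(v_i) \approx_{w_i} s_2(v_i) \in \text{True}(m)$ for every $i \le n$. Applying Lemma \ref{for soundness}(3) to $t_1$ along $s_1$ and to $t_2$ along $s_2$ produces
\begin{align*}
m_w(s_1(t_1)) &= m_v(t_1) \circ (m_{w_1}(s_1(v_1)), \ldots, m_{w_n}(s_1(v_n))) * m_{w, w_1 \cdots w_n}, \\
m_w(s_2(t_2)) &= m_v(t_2) \circ (m_{w_1}(s_2(v_1)), \ldots, m_{w_n}(s_2(v_n))) * m_{w, w_1 \cdots w_n}.
\end{align*}
The premise $t_1 \approx_v t_2 \in \text{True}(m)$ gives $m_v(t_1) = m_v(t_2)$, and each premise $s_1(v_i) \approx_{w_i} s_2(v_i) \in \text{True}(m)$ gives $m_{w_i}(s_1(v_i)) = m_{w_i}(s_2(v_i))$. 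Substituting these into the two right-hand sides makes them coincide, so $m_w(s_1(t_1)) = m_w(s_2(t_2))$, i.e. $s_1(t_1) \approx_w s_2(t_2) \in \text{True}(m)$.

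The hard part has in effect already been absorbed into Lemma \ref{for soundness}, whose proof in turn rests on the canonical-morphism identities of Lemma \ref{Properities of canonical morphisms} and ultimately on the four $\Delta$-action axioms; once those are in hand the soundness argument is purely formal. The main point requiring care is simply matching contexts across the two applications of Lemma \ref{for soundness}(3): because the deduction rule forces both renamings to share the same target context $w$ and the same intermediate contexts $w_i$, the canonical morphisms $m_{w, w_1 \cdots w_n}$ in the two displayed lines are literally the same, which is exactly what allows the right-hand sides to be compared termwise.
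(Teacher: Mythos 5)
Your proposal is correct and matches the paper's own proof essentially line for line: the paper also fixes $m \vDash E$ in an arbitrary $\Delta$-multicategory, lets $T$ be the set of $R$-equations $m$ satisfies, observes that containment of $E$, reflexivity, symmetry and transitivity are immediate, and discharges the substitution rule by two applications of Lemma \ref{for soundness}(3) with the shared context data $(w,(w_i)_{i\leq n})$ making the right-hand sides comparable termwise. Your added remarks on well-definedness via Lemma \ref{for soundness}(1) and on the role of the matching canonical morphism $m_{w,w_1\cdots w_n}$ only make explicit what the paper leaves tacit.
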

\begin{proof}
    Let $m\vDash E$ in a $\Delta$-multicategory $C$. Let $T$ be the set $R$-equations $m$ satisfies. It needs to be shown that $D_E\subset T$. We show this by induction. It is clear that $T$ contains $E$, and satisfies reflexivity, symmetry and transitivity. We need to see that $T$ is closed under $R$-substitution: Let $t_1\approx_v t_2\in T$, where $v = v_1\cdots v_n$ and $v_i\in V$ for $i\leq n,$ and assume that $(s_1,w,(w_i)_{i\leq n})$ and $(s_2,w,(w_i)_{i\leq n})$ are $R$-renamings for $v$. Now by Lemma \ref{for soundness}
    \begin{align*}
        m_w(s_1(t_1)) 
        &= m_v(t_1)\circ (m_{w_1}(s_1(v_1)),\ldots, m_{w_n}(s_1(v_n)))* m_{w,w_1\ldots w_n}\\
        &= m_v(t_2)\circ (m_{w_1}(s_2(v_1)), \ldots, m_{w_n}(s_2(v_n)))* m_{w,w_1\ldots w_n}\\
        &= m_w(s_2(t_2))
    \end{align*}
    Thus $s_1(t_1)\approx_w s_2(t_2)\in T$. Therefore $D_E\subset T$.
\end{proof}

\subsection{Completeness in Sets}
We showed that for a given signature $\sigma$ with a modelable context structure $R$ it holds for any $R$-theory $E$ that $D^R_E\subset \text{True}(m)$ for all $m$ models in any $\Delta_R$-multicategory $C$. We will show a strong version of the converse that there is a single universal model $m$ for $E$ in a $\Delta_R$-multicategory $C$ such that $\text{True}(m) = D^R_E$. However, this universal model does not live in the cartesian multicategory of sets. In this subsection, we are going to show that if $R$ is balanced-complete or the cartesian context structure, then $D^R_E$ is exactly the intersection of $\text{True(m)}$ for all models $m$ in the cartesian multicategory of sets. This has computationally an interesting corollary, that to show $E\vdash \phi$ it suffices to find a balanced-complete $R$ such that $E\vdash_R \phi$.

\begin{definition}[Term-Algebra]
    Let $\sigma = (S,M,V)$ be a signature with a modelable context structure $R$. Let $E$ be an $R$-theory. Denote by $C_v$ and $C_t$ the set of $R$-context over a word $v\in V^*$ and a $\sigma$-term $t$, respectively. Assume the first condition of balanced complete context structures for $R$: If $C_{v_i}\subset C_{w_i}$ for $i\leq n$, then $C_{v_1\cdots v^n}\subset C_{w_1\cdots w_n}$. We define four different term-algebras $n_i,m_i$ for $i = 1,2$ in the cartesian multicategory $\textbf{Set}$ of sets:
    \begin{itemize}
        \item Define $n_1(s) = \{(v,t)\mid t\colon s \text{ is a $\sigma$–term, $v\in V^*$ and $C_v\subset C_t$}\}$ and $n_2(s) = \{t\mid t\colon s \text{ is a $\sigma$–term}\}$ for a $\sigma$-sort $s$.
        \item Let $f\colon a\rightarrow b\in M$, where $a = a_1\cdots a_n$ and $a_i\in S$ for $i\leq n$. We set $n_i(f)\colon n_i(a)\ra n_i(b)$
        {\small
        $$
        n_1(f)((v_1,t_1),\ldots, (v_n,t_n)) = (v_1\cdots v_n,f(t_1,\ldots, t_n))\text{ and }n_2(f)(t_1,\ldots, t_n) = f(t_1,\ldots, t_n)
        $$
        }
        for $(v_j,t_j)\in n_1(a_i), j\leq n$ and $i = 1,2$. The function $n_1(f)$ is well-defined by the modelability of $R$.
    
    \item Let $s\in S$. Set $m_i(s) = n_i(s)/\sim_s^i$, where $\sim_s^i$ is called the provability relation on $n_i(s)$, where $(v_1,t_1)\sim^1(v_2,t_2)$ if and only if $C_{v_1} = C_{v_2}$ and $E\vdash_R t_1\approx_w t_2$ for all $w\in C_{v_1}$ and $t_1\sim^2 t_2$ if and only $E\vdash_{\text{Cart}} t_1\approx_w t_2$ for some context $w$ containing variables in $t_1$ and $t_2$. 
    \item We define $m_i(f)\colon m_i(a)\ra m_i(b)$ as the unique function making the diagram
    $$
    \begin{tikzcd}
n_i(a) \arrow[r, "n_i(f)"] \arrow[d, "q_i(a)"'] & n_i(b) \arrow[d, "q_i(b)"] \\
m_i(a) \arrow[r, "m_i(f)"']                   & m_i(b)                  
\end{tikzcd}
    $$
    commute for a morphism symbol $f\colon a\rightarrow b$. Here we denote the quotient map $n_i(s)\rightarrow m_i(s)$ as $q_i(s)$ for sorts $s$.
    \end{itemize}
    The term algebras $n_1,n_2,m_1,m_2$ are called balanced $R$-term algebra, $R$-term algebra, balanced $E$-term algebra and $E$-term algebra. The modelability of $R$ guarantees that $m_1$ is well-defined.
\end{definition}

\begin{lemma}\label{substitution}
    Let $\sigma = (S,M,V)$ be a signature with a modelable context structure $R$. Let $n_1,n_2$ be the balanced $R$-term model and the $R$-term model, respectively. Let $t$ be a term with a cartesian context $v=v_1\ldots v_n$, where $v_i\in V$ for $i\leq n$. Consider the function $\theta\colon[m]\rightarrow [n]$ where $\tau(t) = v_{\theta(1)}\cdots v_{\theta(m)}$. Let $u = ((v^1, u_1),\ldots, (v^n,u_n))\in n_v$. Denote the renaming of variables $v_i\xmapsto{s} u_i$, for $i\leq n$ in $v$. Then
    $$
    n_{1,v}(t)(u) = (v^\theta\coloneqq v^{\theta(1)}\cdots v^{\theta(m)},s(t)) \text{ and }n_{2,v}(t)(u_1,\ldots, u_n) = s(t).
    $$
\end{lemma}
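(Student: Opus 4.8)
The plan is to prove both identities simultaneously by structural induction on $t$, carrying out every computation inside the cartesian multicategory $\textbf{Set}$. There the canonical morphism $g*m_{v,w}$ attached to a reindexing $w=v_{\theta(1)}\cdots v_{\theta(m)}$ is, by the explicit description of the $\Delta$-action on $\textbf{Set}$, simply the function $(x_1,\ldots,x_n)\mapsto g(x_{\theta(1)},\ldots,x_{\theta(m)})$. The induction for $n_2$ runs identically but without the word coordinate, so I carry out the $n_1$ computation and record the $n_2$ statement alongside.

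For the base cases: if $t=v_i$ then $\tau(t)=v_i$ is indexed by $\theta\colon[1]\to[n]$ with $\theta(1)=i$, and $n_{1,v}(v_i)=id*m_{v,v_i}$ is the $i$-th projection, sending $u$ to its $i$-th entry $(v^i,u_i)=(v^{\theta(1)},s(v_i))$, as required; for $n_2$ this entry is $u_i=s(t)$. If $t=c$ is a constant then $\tau(t)=()$ is indexed by the unique $\theta\colon[0]\to[n]$, and $n_{1,v}(c)=n_1(c)*m_{v,()}$ is the constant multimorphism $n_1(c)=((),c)$; since $s$ fixes $c$ and the empty concatenation is the empty word this equals $(v^{\theta},s(c))$, and dropping the word coordinate gives $n_{2,v}(c)=c=s(c)$.

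For the inductive step $t=f(t_1,\ldots,t_k)$ I would first reduce the recursive clause of the definition to a pointwise formula. Since $v$ is a cartesian context for $t$ it is a cartesian context for each $t_j$, so I may take all the auxiliary contexts in the recursive clause equal to $v$; by the well-definedness in Lemma \ref{for soundness}(1) this does not change $n_{i,v}(t)$. With this choice $w_1\cdots w_k$ is the $k$-fold concatenation of $v$ and the attached canonical morphism $*\,m_{v,v\cdots v}$ is the diagonal in $\textbf{Set}$, so unwinding the multicategorical composition collapses the definition to
$$
n_{i,v}(t)(u)=n_i(f)\bigl(n_{i,v}(t_1)(u),\ldots,n_{i,v}(t_k)(u)\bigr).
$$
Applying the induction hypothesis to each $t_j$ and using $n_1(f)((p_1,r_1),\ldots,(p_k,r_k))=(p_1\cdots p_k,f(r_1,\ldots,r_k))$ together with the substitution rule $s(f(t_1,\ldots,t_k))=f(s(t_1),\ldots,s(t_k))$ puts $s(t)$ in the term coordinate. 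For the word coordinate it remains to see that $v^{\theta_1}\cdots v^{\theta_k}=v^{\theta(1)}\cdots v^{\theta(m)}$, where $v^{\theta_j}$ abbreviates $v^{\theta_j(1)}\cdots v^{\theta_j(m_j)}$ with $\tau(t_j)=v_{\theta_j(1)}\cdots v_{\theta_j(m_j)}$; this is immediate from $\tau(t)=\tau(t_1)\cdots\tau(t_k)$, which forces $\theta$ to restrict to $\theta_j$ on its $j$-th block.

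The base cases and the substitution bookkeeping are routine. The one delicate point is the reduction in the inductive step: one must check that post-composing the composite $n_i(f)\circ(n_{i,v}(t_1),\ldots,n_{i,v}(t_k))$ with the duplicating canonical morphism $m_{v,v\cdots v}$ genuinely yields the diagonal evaluation displayed above. This is a direct unwinding of composition and of the $\Delta$-action on $\textbf{Set}$, and can alternatively be extracted from Lemma \ref{Properities of canonical morphisms}; one also uses there that the well-definedness of $n_{i,v}(t)$ licenses the convenient choice of auxiliary contexts $w_j=v$. Once this reduction is secured, matching the index function $\theta$ with the concatenation of words follows at once from $\tau(t)=\tau(t_1)\cdots\tau(t_k)$.
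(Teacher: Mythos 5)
Your proposal is correct and follows essentially the same route as the paper: structural induction on $t$ with identical base cases, and an inductive step that unwinds $n_{i,v}(f(t_1,\ldots,t_k))(u)$ to the pointwise formula $n_i(f)\bigl(n_{i,v}(t_1)(u),\ldots,n_{i,v}(t_k)(u)\bigr)$ and then matches the word coordinate via $\tau(t)=\tau(t_1)\cdots\tau(t_k)$. The only difference is that you make explicit the reduction the paper leaves implicit — choosing the auxiliary contexts $w_j = v$, licensed by the well-definedness of Lemma \ref{for soundness}(1), and identifying the canonical morphism with the diagonal under the explicit $\Delta$-action on $\textbf{Set}$ — which is a welcome clarification rather than a deviation.
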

\begin{proof}
    The case for $n_2$ is similar, so we prove only the case for $n_1 = n$. We prove the claim by induction on the structure of the term $t$. If $t = c$ is a constant, then the right-hand side is $((),c)$.  The left-hand side is 
    $$
    n_v(c)(u) = ((),c).
    $$
    Assume that $t = v_i$. The right-hand side is $(v^i,u_i)$. The left-hand side is 
    $$
    n_v(v_i)((v_1,u_1),\ldots, (v_n,u_n)) = (v^i, u_i)
    $$

    Assume then that $t = f(t_1,\ldots, t_k)$ and the claim holds for $t_1,\ldots, t_k$. Let $\theta_i\colon [m_i]\rightarrow [n]$ be the unique function where $\tau(t_i) = v^{\theta_i}$ for $i\leq k$. Notice that $v^\theta = v^{\theta_1}\cdots v^{\theta_k}$. Now we attain
        \begin{align*}
        n_v(t)(u)
        &= n(f)(n_v(t_1)(u),\ldots, n_v(t_k)(u)) \\
        &= n(f)((v^{\theta_1},s(t)),\ldots, (v^{\theta_k},s(t_k))) \\
        &= (v^{\theta_1}\cdots v^{\theta_k},f(s(t_1),\ldots, s(t_k))) \\
        &= (v^{\theta},s(t)).
        \end{align*}
\end{proof}

\begin{lemma}[Term-naturality]\label{Term-naturality}
    Let $\sigma$ be a signature with a modelable context structure $R$ and let $f\colon m\ra n$ be a $\sigma$-model morphism in a $\Delta_R$-multicategory $C$. Let $t\colon b$ be a term with an $R$-context $v$. Then
    $$
    \begin{tikzcd}
m_v \arrow[r, "m_v(t)"] \arrow[d, "f^v"'] & m(b) \arrow[d, "f(b)"] \\
n_v \arrow[r, "n_v(t)"']                  & n(b)                  
\end{tikzcd}
    $$
    commutes.
\end{lemma}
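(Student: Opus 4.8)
The plan is to prove the square commutes by structural induction on the term $t$, establishing the equation $f(b)\circ m_v(t) = n_v(t)\circ f^v$, where $f^v = (f(a_1),\ldots,f(a_n))$ for $v = v_1\cdots v_n$ with $v_i\colon a_i$. Two elementary ``sliding'' principles drive the argument. First, a one-ary morphism composed on the output of an action slides through that action: the third $\Delta$-action axiom with a single outer input yields $h\circ \theta_{a,m(b),*}(g) = \theta_{a,n(b),*}(h\circ g)$ for $h\colon m(b)\ra n(b)$. Second, the fourth $\Delta$-action axiom, specialized to one-ary inputs $f(a_1),\ldots,f(a_n)$ where the similarity degenerates to $\theta'_{1,\ldots,1} = \theta$, gives the dual principle that an action on the output word slides past a family of one-ary morphisms on the inputs. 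These are precisely the moves needed to pass $f$ across the canonical morphisms appearing in the definition of $m_v(t)$.

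For the base cases, first let $t = v_i$, so $m_v(v_i) = id_{m(a_i)}*m_{v,v_i}$ is the action of $\phi\colon 1\mapsto i$. Applying the third action axiom to slide $f(a_i)$ through, and the fourth action axiom (one-ary inputs) together with the identity law to slide the action past $f^v$, both sides reduce to $\phi_{m_v,n(a_i),*}(f(a_i))$. For a constant $t = c\colon b$ one argues identically, using the model-morphism condition $f(b)\circ m(c) = n(c)$ (the domain family being empty) in place of the identity law; both sides reduce to the action of the empty map applied to $n(c)$.

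For the inductive step write $t = f'(t_1,\ldots,t_k)$ with $m_v(t) = (m(f')\circ(m_{w_1}(t_1),\ldots,m_{w_k}(t_k)))*m_{v,w}$, where $w = w_1\cdots w_k$, $w_i$ is an $R$-context for $t_i$, and $vRw$ via $\theta\colon[l(w)]\ra[n]$ with $w = v_\theta$. The computation proceeds:
\begin{align*}
f(b)\circ m_v(t)
&= \theta_{m_v,n(b),*}\big(f(b)\circ m(f')\circ(m_{w_1}(t_1),\ldots,m_{w_k}(t_k))\big)\\
&= \theta_{m_v,n(b),*}\big(n(f')\circ(f(a_1)\circ m_{w_1}(t_1),\ldots,f(a_k)\circ m_{w_k}(t_k))\big)\\
&= \theta_{m_v,n(b),*}\big(n(f')\circ(n_{w_1}(t_1)\circ f^{w_1},\ldots,n_{w_k}(t_k)\circ f^{w_k})\big)\\
&= \theta_{m_v,n(b),*}\big((n(f')\circ(n_{w_1}(t_1),\ldots,n_{w_k}(t_k)))\circ f^{w}\big)\\
&= \theta_{n_v,n(b),*}\big(n(f')\circ(n_{w_1}(t_1),\ldots,n_{w_k}(t_k))\big)\circ f^{v} = n_v(t)\circ f^v.
\end{align*}
The first equality is the third action axiom (sliding $f(b)$ onto the output) together with associativity; the second combines the model-morphism condition $f(b)\circ m(f') = n(f')\circ f^a$ with associativity; the third is the inductive hypothesis applied to each $t_i$; the fourth is associativity together with the identification $(f^{w_1},\ldots,f^{w_k}) = f^{w}$; and the last is the fourth action axiom with one-ary inputs, using $f^w = (f^v)_\theta$ since $w = v_\theta$.

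The main obstacle is the final equality, where the mismatched action $\theta_{m_v,n(b),*}$ (acting on $m$-inputs but with codomain $n(b)$) must be converted into the genuine canonical morphism $\theta_{n_v,n(b),*}$ of the model $n$ followed by $f^v$. This is exactly what the fourth $\Delta$-action axiom supplies once the inputs are one-ary, because then the similarity component $\theta'_{1,\ldots,1}$ collapses to $\theta$ itself; verifying that $f^w$ is the $\theta$-reindexing $(f^v)_\theta$ of the input family is the bookkeeping that makes this step go through. The remaining steps are routine applications of associativity, the identity laws, and the defining property of a morphism of models.
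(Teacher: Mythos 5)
Your proof is correct and takes essentially the same route as the paper's: structural induction on $t$, sliding $f(b)$ through the canonical action via the third $\Delta$-action axiom, applying the model-morphism square and the inductive hypothesis, and converting the mismatched action $\theta_{m_v,n(b),*}$ into $\theta_{n_v,n(b),*}(\,\cdot\,)\circ f^v$ via the fourth axiom with unary inputs, where $\theta'_{1,\ldots,1}=\theta$. Your explicit verification that $f^w = (f^v)_\theta$ merely spells out a bookkeeping step the paper leaves implicit.
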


\begin{proof}
    If $t = c$ is a constant, then 
    \begin{align*}
    f(b)\circ m_v(c) 
    &= f(b)\circ (m(c)*m_{v,()})\\
    &= (f(b)\circ m(c))*m_{v,()} \\
    &= n(c)*m_{v,()} \\
    &= (n(c)*n_{v,()})\circ f^v \\
    &= n_v(c)\circ f^v
    \end{align*}
    If $t = v_i$ is a variable symbol, then denote $1\xmapsto{\phi} i\colon [1]\rightarrow [n]$ and now
    $$
    f(b)\circ m_v(v_i) = f(b)\circ \phi_*(id_b) = \phi_*(f(b)\circ id_b) = \phi_*(f(b))
    $$
    and
    $$
    n_v(v_i)\circ f^v = (\phi_*(id_b))\circ f^v = (\phi'_{1,\ldots, 1})_*(id_b\circ f(b)) = \phi_*(f(b)).
    $$
    Assume then that $t = \alpha(t_1,\ldots, t_k)$ and the induction hypothesis holds for $t_1\colon a_1,\ldots, t_k\colon a_k$. Let $v^i$ be an $R$-terminal context for $t_i$ for $i\leq k$. Now
    \begin{align*}
        f(b)\circ m_v(\alpha(t_1,\ldots, t_k))
        &= f(b)\circ m_{v,v^1\cdots v^n}(m(\alpha)\circ (m_{v^1}(t_1),\ldots, m_{v^k}(t_n)))\\
        &= m_{v,v^1\cdots v^k}(f(b)\circ m(\alpha)\circ (m_{v^1}(t_1),\ldots, m_{v_k}(t_k)))\\
        &= m_{v,v^1\cdots v^k}(n(\alpha)\circ (f(a_1)\circ m_{v^1}(t_1),\ldots, f(a_k)\circ m_{v_k}(t_k)))\\
        &= m_{v,v^1\cdots v^k}(n(\alpha)\circ (n_{v^1}(t_1)\circ f^{v^1},\ldots, n_{v^k}(t_k)f^{v^k}))\\
        &= m_{v,v^1\cdots v^k}((n(\alpha)\circ (n_{v^1}(t_1),\cdots n_{v^k}(t_k))\circ f^{v^1\cdots v^k})\\
        &= m_{v,v^1\cdots v^k}(n(\alpha)\circ (n_{v^1}(t_1),\ldots, n_{v^k}(t_k))\circ f^{v^1\ldots, v^k})\\
        &= n_{v,v^1\cdots v^k}(n(\alpha)\circ (n_{v^1}(t_1),\ldots, n_{v^k}(t_k)))\circ f^v\\
        &= n_v(t)\circ f^v.
    \end{align*}
\end{proof}

\begin{theorem}\label{Provability relation}
    Let $\sigma$ be a signature with a modelable context structure $R$. Let $E$ be an $R$-theory. Denote by $C_v$ the set of contexts over a word $v\in V^*$. Then the following holds:
    \begin{enumerate}
        \item Let $n$ and $\sim$ be the balanced $R$-term algebra the balanced $E$-provability relation on $n$, respectively. Denote by $m = n/\sim$ the balanced $E$-term model. Let $t_1\approx_v t_2$ be an $R$-equation. Then $m\vDash t_1\approx_v t_2$ if and only if $E\vdash_R t_1\approx_w t_2$ for some $R$-terminal context $w$ for both $t_1$ and $t_2$. 
        \item Let $n$ and $\sim$ be the $R$-term algebra and the $E$-provability relation on $n$, respectively. Let $m$ be the $E$-term model. Let $t_1\approx_v t_2$ be a $\sigma$–equation. Then $m\vDash t_1\approx_v t_2$ if and only if $t_1\sim t_2$. 
    \end{enumerate}
\end{theorem}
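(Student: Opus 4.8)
The plan is to reduce the semantic condition $m\vDash t_1\approx_v t_2$ to a pointwise provability condition on the term algebra $n$, exploiting that the quotient map $q\colon n\to m$ is a $\sigma$-model morphism that is surjective on every sort. Applying Lemma \ref{Term-naturality} to $q$ gives $m_v(t)([u]) = [\,n_v(t)(u)\,]$ for every $u\in n_v$, and by surjectivity every argument of $m_v(t)$ arises in this form. Hence $m\vDash t_1\approx_v t_2$, i.e.\ $m_v(t_1)=m_v(t_2)$ as functions, holds if and only if $n_v(t_1)(u)\sim n_v(t_2)(u)$ for all $u\in n_v$. Invoking Lemma \ref{substitution}, for $u=((v^1,u_1),\dots,(v^n,u_n))$ with associated renaming $s\colon v_i\mapsto u_i$ this becomes, for $n_1$, the requirement $(v^{\theta_1},s(t_1))\sim^1(v^{\theta_2},s(t_2))$ for all admissible $s$, where $\theta_k$ is the function with $\tau(t_k)=v_{\theta_k(1)}\cdots v_{\theta_k(m_k)}$; for $n_2$ it becomes $s(t_1)\sim^2 s(t_2)$ for all $s$.

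For the forward direction I would specialise to the generic tuple $u_i=(v_i,v_i)\in n_1(s_i)$ (respectively $u_i=v_i$ for $n_2$), for which $s$ is the identity, $v^{\theta_k}=\tau(t_k)$ and $s(t_k)=t_k$. The resulting relation $(\tau(t_1),t_1)\sim^1(\tau(t_2),t_2)$ unpacks by definition to $C_{\tau(t_1)}=C_{\tau(t_2)}$ together with $E\vdash_R t_1\approx_{w'}t_2$ for every $w'\in C_{\tau(t_1)}$. Since any terminal context $w$ of $t_1$ satisfies $wRw$ and hence lies in $C_{\tau(t_1)}$, and since the equality of context-sets makes $w$ terminal for $t_2$ as well, this yields the desired $E\vdash_R t_1\approx_w t_2$ for a common terminal $w$. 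The $n_2$ computation is identical and delivers $t_1\sim^2 t_2$ outright.

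For the converse I would start from $E\vdash_R t_1\approx_w t_2$ at a terminal context $w$ and propagate it to every tuple $u$. Given the renaming $s$ attached to $u$, choosing terminal contexts for the words $s(v_i)$ makes $(s,\dots)$ a legitimate $R$-renaming, so the $R$-substitution clause of the deduction system, applied with $s_1=s_2=s$ and the reflexive premises $s(v_i)\approx s(v_i)$, produces $E\vdash_R s(t_1)\approx s(t_2)$ at the appropriate context; a context-change along $w'Rw$ then upgrades this to all $w'$ in the relevant context-set, supplying the provability half of $\sim^1$. The equality $C_{v^{\theta_1}}=C_{v^{\theta_2}}$ of the accompanying context-sets follows from $C_{\tau(t_1)}=C_{\tau(t_2)}$ by Corollary \ref{Sets of contexts corollary}, since $v^{\theta_k}$ is obtained from $\tau(t_k)$ by the letter-to-word substitution $v_j\mapsto v^j$. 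The $n_2$ case again uses only the cartesian substitution rule for $\vdash_{\text{Cart}}$ and requires no context bookkeeping.

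The main obstacle is exactly this context-set bookkeeping for $n_1$: one must verify that the map on context-sets induced by a substitution is well behaved enough that $C_{\tau(t_1)}=C_{\tau(t_2)}$ forces $C_{v^{\theta_1}}=C_{v^{\theta_2}}$, and that every tuple-induced datum $(s,(w_i),w)$ is a genuine $R$-renaming so that the substitution clause applies. Both hinge on the modelability of $R$ through the existence of terminal contexts and on Corollary \ref{Sets of contexts corollary}; once these are secured, the two directions close as above, and the simpler $n_2$ statement follows by the same generic-tuple argument with $\sim^2$ and $\vdash_{\text{Cart}}$ in place of $\sim^1$ and $\vdash_R$.
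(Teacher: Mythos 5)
Your overall strategy is exactly the paper's: reduce $m\vDash t_1\approx_v t_2$ to pointwise provability on the term algebra via the surjective quotient $q\colon n\to m$ and Lemma \ref{Term-naturality}, get the forward direction by evaluating at the generic tuple $((v_1,v_1),\ldots,(v_n,v_n))$ with Lemma \ref{substitution}, and get the converse by applying the $R$-substitution clause with $s_1=s_2=s$ and reflexive premises, using modelability to produce the intermediate contexts $c_i$ with $c_iRv^i$ and $wRc_1\cdots c_n$. All of that is sound and coincides with the paper's proof, including your unpacking of $\sim^1$ at the generic tuple and the observation that $C_{\tau(t_1)}=C_{\tau(t_2)}$ makes a terminal context of $t_1$ terminal for $t_2$ as well.

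The one step that does not go through as you justify it is the context-set equality $C_{v^{\theta_1}}=C_{v^{\theta_2}}$ in the converse of part 1. You derive it from $C_{\tau(t_1)}=C_{\tau(t_2)}$ by asserting that Corollary \ref{Sets of contexts corollary} makes the letter-to-word substitution $v_j\mapsto v^j$ preserve equality of context sets. The corollary states no such invariance: its first clause requires componentwise inclusions $C_{x^i}\subset C_{y^i}$ along a \emph{common} index pattern, whereas $v^{\theta_1}$ and $v^{\theta_2}$ are concatenations of the blocks $v^j$ along two different patterns $\theta_1\neq\theta_2$, possibly of different lengths, so there is nothing to match componentwise. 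What actually closes the step is the corollary's \emph{second} clause, about surjections in $\Delta_R$, and to invoke it you must first make the $\theta_k$ surjective. The paper does this by a reduction you never perform: replace the working context $v$ by the common terminal context $w$ (legitimate, since $vRw$ and Lemma \ref{for soundness} pushes equality of interpretations at $w$ down to $v$), note that a terminal context satisfies $\mathrm{Var}(w)=\mathrm{Var}(t_k)$ so the equation becomes balanced, whence each $\theta_k$ is a surjection lying in $\Delta_R$ (because $wR\tau(t_k)$), and then $C_{v^{\theta_k}}=C_{v^1\cdots v^n}$ for both $k=1,2$. You flag this bookkeeping as the main obstacle, but the mechanism you propose for discharging it is the wrong one; with the terminal-context reduction inserted, your remaining verifications (the legitimacy of the renaming triple, which follows from $c_iRv^i$, $C_{v^i}\subset C_{u_i}$ and condition 3 of context structures, and the context-change along $w'R\overline{v^1\cdots v^n}$) are correct, and your treatment of part 2 via $\vdash_{\text{Cart}}$, where no context bookkeeping is needed, matches the paper's omitted-as-similar argument.
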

\begin{proof}\hfill
    \begin{enumerate}
        \item Assume first $m_v(t_1) = m_v(t_2)$ for the forwards direction. Now by Lemmas \ref{substitution} and \ref{Term-naturality}
    \begin{align*}
        [(\tau(t_1),t_1)]
        &= [n_v(t_1)((v_1,v_1),\ldots,(v_n,v_n))]\\
        &= m_v(t_1)([v_1,v_1],\ldots, [v_n,v_n])\\
        &= m_v(t_2)([v_1,v_1],\ldots, [v_n,v_n])\\
        &= [n_v(t_2)((v_1,v_1),\ldots, (v_n,v_n)]\\
        &= [(\tau(t_2),t_2)]
    \end{align*}
    Therefore, $C_{t_1} = C_{t_2}$ and $E\vdash_R t_1\approx_w t_2$ for all $w\in C_{t_1}$. Since $C_{t_1} = C_{t_2}$ and $v\in C_{t_1}$, it follows that $t_1$ and $t_2$ have the same variables appearing and hence they have a common terminal context $w$ and therefore $E\vdash_R t_1\approx_w t_2$.

    For the converse, we may assume that $E\vdash_R t_1\approx_v t_2$ and that $t_1\approx_v t_2$ is a balanced equation. Let $((v^1,u_1),\ldots, (v^n,u_n))\in n_v$. Consider the function $\theta_i$, where $\tau(t_i) = v_{\theta_i}$ for $i = 1,2$. We show that $m_v(t_1)([v^1,u_1],\ldots, [v^n,u_n]) = m_v(t_2)([v^1,u_1],\ldots, [v^n, u_n])$. Let $s$ be the renaming $v_i\xmapsto{s} u_i$ for $i\leq n$. Notice that 
    $$
    m_v(t_i)([v^1,u_1],\ldots, [v^n,u_n]) = [v^{\theta_i},s(t_i)] = [v^1\cdots v^n,s(t_i)],
    $$
    since $C_{v^{\theta_i}} = C_{v^1\cdots v^n}$ by the Corollary \ref{Sets of contexts corollary}. Let $wRv^1\cdots v^n$. Suffices to show that $E\vdash_R s(t_1)\approx_w s(t_2)$. Since $R$ is modelable, there exists contexts $c_i$ where $c_iRv^i$ for $i\leq n$ and $wRc_1\cdots c_n$. Thus $E\vdash_R s(v_i) = u_i\approx_{c_i} u_i = s(v_i)$ for $i\leq n$ and $E\vdash_R t_1\approx_v t_2$. Thus $E\vdash_R s(t_1)\approx_w s(t_2)$ by the definition of $R$-deduction. Hence $m_v(t_1) = m_v(t_2)$. 

    \item The proof is similar to the previous part and hence omitted.
    \end{enumerate}
\end{proof}
\begin{theorem}[$\textbf{Set}$-Completeness]\label{Completeness Theorem}
    Let $\sigma = (S,M,V)$ be a signature with a modelable context structure $R$. Let $E\cup\{\phi\}$ be an $R$-theory. Assume one of the following assertions:
    \begin{enumerate}
        \item The context structure $R$ is balanced, meaning that $cRv$ implies that $\text{Var}(c) = \text{Var}(v)$.
        \item The context structure $R$ is the cartesian context structure.
    \end{enumerate}
    Then
    $$
    E\vdash_R \phi \text{ if and only if } E\vDash_{\textbf{Set}} \phi
    $$
\end{theorem}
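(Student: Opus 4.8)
The statement packages a soundness half, $E\vdash_R\phi\Rightarrow E\vDash_{\textbf{Set}}\phi$, and a completeness half, $E\vDash_{\textbf{Set}}\phi\Rightarrow E\vdash_R\phi$. The plan is to read the first off the general soundness theorem and to derive the second from the term models already analysed in Theorem~\ref{Provability relation}. For the soundness half I would first record that the cartesian multicategory $\textbf{Set}$ is a $\Delta_R$-multicategory for \emph{every} structure category $\Delta_R$: the full $\textbf{FinOrd}$-action $\theta_*(f)(x_1,\ldots,x_n)=f(x_{\theta(1)},\ldots,x_{\theta(m)})$ restricts along $\Delta_R\hookrightarrow\textbf{FinOrd}$ to a legitimate $\Delta_R$-action. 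Theorem~\ref{Soundness theorem} then applies verbatim with $C=\textbf{Set}$, giving $E\vdash_R\phi\Rightarrow E\vDash_{\textbf{Set}}\phi$ with no case distinction.

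For completeness I would exhibit one distinguished $\textbf{Set}$-model of $E$ on which the hypothesis can be tested. In case~(1), with $R$ balanced, take $m$ to be the balanced $E$-term model $m_1$, whose well-definedness is precisely the context-set monotonicity furnished by Corollary~\ref{Sets of contexts corollary}; in case~(2), with $R$ the cartesian context structure, take $m$ to be the $E$-term model $m_2$. The first task is to check $m\vDash E$. For each axiom $t_1\approx_v t_2\in E$ one has $E\vdash_R t_1\approx_v t_2$, since $E\subseteq D_E^R$; Theorem~\ref{Provability relation} then converts $R$-provability into satisfaction in $m$---part~(1) for $m_1$ and part~(2) for $m_2$---so that $m\vDash E$. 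In case~(1) this uses that the prescribed context $v$ may be taken as the terminal context required by Theorem~\ref{Provability relation}(1), as explained below.

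Granting $m\vDash E$, the hypothesis $E\vDash_{\textbf{Set}}\phi$ forces $m\vDash\phi$, and the left-to-right direction of Theorem~\ref{Provability relation} returns provability. Writing $\phi=t_1\approx_v t_2$, case~(1) yields $E\vdash_R t_1\approx_w t_2$ for some terminal context $w$ of $t_1$ and $t_2$, and case~(2) yields $t_1\sim^2 t_2$, that is $E\vdash_{\text{Cart}} t_1\approx_w t_2$ for some $w$. The remaining, and most delicate, step is to reconcile this unspecified $w$ with the context $v$ actually prescribed by $\phi$. For this I would invoke the existence of terminal contexts for all $R$-words, guaranteed for modelable $R$ by Theorem~\ref{characterizing modelable context-structures}, together with the $R$-substitution deduction rule. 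Since $v$ is an $R$-context of $t_1$ and $w$ is terminal, the defining biconditional of a terminal context gives $vRw$ (from $vR\tau(t_1)\Leftrightarrow vRw$). The substitution rule, applied with source context $w$, target context $v$, the identity renaming, and fibre contexts equal to the single variables of $w$---the premises $w_i\approx_{w_i} w_i$ holding by reflexivity---transports $E\vdash_R t_1\approx_w t_2$ to $E\vdash_R t_1\approx_v t_2$, i.e.\ $E\vdash_R\phi$. Balancedness is what makes the whole reconciliation available: it forces $\text{Var}(v)=\text{Var}(\tau(t_1))=\text{Var}(\tau(t_2))$, so $t_1$ and $t_2$ share a common terminal context and the prescribed $v$ is itself terminal (the fact used above for $m\vDash E$). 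In case~(2) the cartesian structure relates any two contexts carrying the correct variables, so the transport is immediate and $\vdash_R=\vdash_{\text{Cart}}$ finishes it. Assembling these pieces closes the completeness direction and hence the theorem.
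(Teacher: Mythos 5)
Your soundness half and your balanced case~(1) are correct and match the paper's route. The paper likewise obtains the forward implication by restricting the full $\textbf{FinOrd}$-action on $\textbf{Set}$ along $\Delta_R$ and citing Theorem~\ref{Soundness theorem}, and disposes of case~(1) by citing Theorem~\ref{Provability relation}(1); your explicit reconciliation of the terminal context $w$ with the prescribed context $v$ --- deducing $vRw$ from the defining biconditional of terminality and then transporting with the substitution rule, identity renaming, and reflexivity premises --- is a detail the paper leaves implicit, and it is right.

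Case~(2), however, has a genuine gap at the step ``the transport is immediate.'' Theorem~\ref{Provability relation}(2) only yields $E\vdash_{\text{Cart}} t_1\approx_w t_2$ for \emph{some} context $w$ containing the variables of $t_1$ and $t_2$; this $w$ may contain auxiliary variables of arbitrary sorts. In the cartesian context structure $vRw$ requires $\text{Var}(w)\subseteq\text{Var}(v)$, so an extra variable $x\colon s$ of $w$ can only be eliminated by the substitution rule if there exists a term $t_x\colon s$ whose variables lie in $v$, and such inhabitation can fail. Concretely, take sorts $s,u$, constants $c,d\colon s$, no other symbols, and $E=\{c\approx_z d\}$ with $z\colon u$. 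In the $E$-term model $m_2$ every sort is inhabited by variables, so $c\sim^2 d$ and $m_2\vDash c\approx_{()} d$; yet the $\textbf{Set}$-model with $m(u)=\emptyset$, $m(s)=\{0,1\}$, $m(c)=0\neq 1=m(d)$ satisfies $E$ but not $c\approx_{()}d$, so by soundness $E\not\vdash_{\text{Cart}} c\approx_{()} d$. Thus the single term model $m_2$ cannot detect empty-carrier countermodels, and your argument, which infers $E\vdash_R\phi$ from $m_2\vDash\phi$ alone, would prove a false statement in this example. This is precisely why the paper's proof of case~(2) does not test the hypothesis on $m_2$: it builds the restricted signature $\sigma'$ retaining only those sorts inhabited by a term over $\text{Var}(v)$, forms the $E'$-term model there, and extends it by empty carriers to a $\sigma$-model $m\vDash E$; then the context $w$ extracted from $m\vDash\phi$ lies in $V'^*$, every extra variable of $w$ admits a substituting term $t_x$ over $v$, and the substitution rule delivers $E\vdash t_1\approx_v t_2$. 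Your proof needs this restricted-signature, empty-carrier device (or an equivalent one) to close case~(2).
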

\begin{proof}
    Soundness already gives us that $E\vdash_R \phi$ implies $E\vDash_{\textbf{Set}}\phi$. 
    \begin{enumerate}
        \item Assume that $R$ is balanced-complete. Hence the equation $\phi$ is balanced and the Theorem \ref{Provability relation}(1) implies the completeness.
        
        \item Assume that $R$ is the cartesian context structure and that $E\not\vdash \phi$. Denote $\phi = t_1\approx_v t_2$. Consider a new signature $\sigma' = (S',M',V')$ and a $\sigma'$-theory $E'$, where
        \begin{align*}
            S' &= \{s\in S\mid \text{$\text{Var}(t)\subset\text{Var}(v)$ for some $\sigma$-term $t\colon s$}\},\\
            M' &= \{f\colon a\rightarrow b\in M \mid  a\in S'^*, b\in S\},\\
            V' &= \{v\in V\mid  v\colon s\text{ for some $s\in S'$}\}\text{ and }\\
            E' &= \{t_1\approx_w t_2\in E \mid  w\in V'^*\}
        \end{align*}
        Notice that the codomain of a function symbol in $M'$ is still in $S'$ which follows from how one constructs terms. Let $m' = m_{E'}$ be the $E'$-term model. We extend $m'$ to be a $\sigma$-model $m$ by setting 
    $$
    m(s) = \begin{cases}
        m'(s),\text{ if } s\in S'\\
        \emptyset, \text{ if } s\not\in S'
    \end{cases}\text{ and }
    m(f) = \begin{cases}
        m'(f)\colon m(a)\rightarrow m(b),\text{ if } f\in M'\\
        \emptyset\colon m(a)\rightarrow m(b),\text{ if } f\not\in M'
    \end{cases}
    $$
    Since $m_v(t) = m'_v(t)$ for any $\sigma'$ term $t$ in a context $v\in V'^*$, it follows that $m\vDash \psi$ for $\psi\in E'$. The model $m$ satisfies the rest of the equations in $E$ by vacuity, since $m_v$ is the empty set for any $v\in V\setminus V'$. Therefore $m\vDash E$. Assume towards a contradiction that $m\vDash t_1\approx_v t_2$. Thus $m'\vDash t_1\approx_v t_2$ and by Theorem \ref{Provability relation}(2) $t_1\sim t_2$, where $\sim$ is the provability relation. Thus $E'\vdash t_1\approx_w t_2$ for some context $w\in V'^*$ for $t_1$ and $t_2$. We define a renaming of variables in $w$ as follows
    $$
    x\xmapsto{s}\begin{cases}
        x,\text{ if } x\text{ is expressed in } v\\
        t_x,\text{else},
    \end{cases}
    $$
    where $t_x$ is a term with the same type as $x$ and $t_x$ has its variables expressed in $v$. Notice that $v$ is a context for $\tau(s(w_1))\cdots \tau(s(w_n))$. Therefore, the substitution rule of deduction yields $E\vdash t_1\approx_v t_2$, which is a contradiction. Therefore, $m\not\vDash t_1\approx_v t_2$. \qedhere
    \end{enumerate}
\end{proof}

\begin{theorem}[Multi-Categorical Meta-Theorem]
    Let $\sigma$ be a signature with a modelable context structure $R$. Assume that $R$ is either balanced or the cartesian context structure. Consider the associated structure category $\Delta$. Let $E\cup\{\phi\}$ be an $R$-theory. Let $C$ be a $\Delta$-multicategory. Then
    $$
    E\vDash_{\textbf{Set}} \phi \text{ implies } E\vDash_C \phi.
    $$
\end{theorem}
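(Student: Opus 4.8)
The plan is to derive this statement as an immediate corollary by chaining the two principal results already in hand: the $\textbf{Set}$-completeness of the deduction system $\vdash_R$ (Theorem \ref{Completeness Theorem}) and its soundness over arbitrary $\Delta_R$-multicategories (Theorem \ref{Soundness theorem}). The syntactic relation $\vdash_R$ serves as the common pivot. Completeness transports us from semantic truth in $\textbf{Set}$ into syntax, and soundness transports us from syntax back out to semantic truth in an arbitrary $C$; composing the two implications yields exactly the desired transference $E\vDash_{\textbf{Set}}\phi \Rightarrow E\vDash_C\phi$.

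Concretely, I would argue as follows. Suppose $E\vDash_{\textbf{Set}}\phi$. Since $R$ is modelable and is either balanced or the cartesian context structure, the hypotheses of Theorem \ref{Completeness Theorem} hold (the balanced case being balanced-complete once modelability is taken into account), so that theorem gives $E\vdash_R\phi$. Now fix an arbitrary $\Delta = \Delta_R$-multicategory $C$. Applying the Soundness Theorem (Theorem \ref{Soundness theorem}) to the derivation $E\vdash_R\phi$ produces $E\vDash_C\phi$. As $C$ was arbitrary, the implication holds for every $\Delta_R$-multicategory, which is the claim.

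I do not expect a genuine obstacle here: the essential work — constructing the term models that witness $\textbf{Set}$-completeness, and verifying the $\Delta_R$-action axioms needed for soundness — has already been discharged in the preceding theorems. The only point requiring care is that the hypotheses align, and they do: both invoked theorems are stated for a modelable $R$, the completeness direction uses precisely the ``balanced or cartesian'' dichotomy assumed here, while the soundness direction holds for \emph{every} $\Delta_R$-multicategory with no extra restriction. A final sanity check worth recording is that the structure category $\Delta$ named in the statement is the one attached to $R$ under the bijection $R\mapsto\Delta_R$, so ``$\Delta$-multicategory'' and ``$\Delta_R$-multicategory'' coincide and Theorem \ref{Soundness theorem} applies verbatim.
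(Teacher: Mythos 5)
Your proposal is correct and matches the paper's own proof exactly: the paper likewise derives the theorem by applying the $\textbf{Set}$-Completeness Theorem to obtain $E\vdash_R\phi$ and then the Soundness Theorem to conclude $E\vDash_C\phi$. Your extra checks on hypothesis alignment and on $\Delta=\Delta_R$ are sound and consistent with the paper's usage.
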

\begin{proof}
    If $E\vDash_{\textbf{Set}} \phi$, then by the Completeness Theorem \ref{Completeness Theorem} $E\vdash_R \phi$, and by the Soundness Theorem \ref{Soundness theorem} $E\vDash_C \phi$.
\end{proof}
The Multicategorical Meta-Theorem applies for $\Delta$-multicategories, where $\Delta$ is one of the following $6$ structures:
\begin{multicols}{2}
    \begin{enumerate}
        \item $\Delta_{\{1\}}$ of identities.
        \item $\Delta^{\{1\}}$ of bijections.
        \item $\Delta_\N = \Delta^\N$ of all functions.
        \item $\Delta^{\N_{1}}$ of surjections.
        \item $\Psi_{N_1}$ of left surjections.
        \item $\Psi^{N_1}$ of right surjections.
    \end{enumerate}
\end{multicols}
The structure categories $\Delta_{\{0,1\}}$ and $\Delta^{\{0,1\}}$ corresponding to strictly increasing maps and injections are do not have an associated completeness theorem concerning sets. Consider the signature $\sigma = (S = \{*\}, M=\{f\colon **\rightarrow *\})$ with a theory $E = \{f(x,y)\approx_{xyz} x)\}$. Now $E\vdash f(x,y)\approx_{xy} x$, but $E\not\vdash_R f(x,y)\approx_{xy} x$ where $R$ is either $R^{\{0,1\}}$ or $R_{\{0,1\}}$. Let $D = \{t_1\approx_v t_2\mid \text{if $\text{Var}(t_i) = \text{Var}(v)$ for either $i = 1,2$, then $t_1 = t_2$}\}$ be a family of $R$-equations. The family $D$ satisfies the closure properties of $R$-deduction and $E\subset D$ and thus $D_E^R\subset D$. Since $f(x,y)\approx_{xy} x\not\in D$ it follows that $E\not\vdash_R f(x,y)\approx_{xy} x$.
\section{Categorical completeness}
Consider a modelable context structure $R$. At the end of the previous section, we saw that two of the eight modelable deduction systems $\vdash_R$ do not have complete semantics in sets. In this section, we remedy this problem and show that $\vdash_R$ has complete semantics if we permit all the models in all $\Delta_R$-multicategories, not just the ones in sets.

For each $R$-theory $E$, we construct the initial $\Delta_R$-multicategory equipped with model $m\vDash E$. We show that for $m$ it holds that $D_E^R = \text{True(m)}$. This result will be called categorical completeness.

The following theorem is known and very useful as it allows to construct free algebras.
\begin{theorem}
    Let $\sigma = (S,M,V)$ be a signature and let $E$ be a $\sigma$-theory. Then the forgetful functor $U\colon \text{Model}(\textbf{Set}, E)\rightarrow \textbf{Set}^S$ has a left adjoint.
\end{theorem}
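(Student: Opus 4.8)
\section*{Proof proposal}

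The plan is to build the free model explicitly as a term algebra modulo provable equality and then to verify its universal property, following the classical construction of free multisorted algebras. Fix an $S$-sorted set $X = (X_s)_{s\in S}$ in $\textbf{Set}^S$. First I would form the absolutely free $\sigma$-algebra (term algebra) $T(X)$ on $X$: for each sort $s$, let $T(X)_s$ be the set of $\sigma$-terms of type $s$ obtained by treating each element of $X_s$ as a variable of sort $s$, closing under the constant symbols $c\colon s$ and under application of every function symbol $f\colon a_1\cdots a_n\ra b$. Interpreting each $f$ by syntactic term formation makes $T(X)$ a $\sigma$-model in $\textbf{Set}$ that ignores the equations of $E$.

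Next I would impose $E$. Let $\sim$ be the least $S$-sorted equivalence relation on $T(X)$ that is a congruence (compatible with every operation $f^{T(X)}$) and that contains every substitution instance of each equation in $E$, where a substitution assigns to the variables of the equation arbitrary terms of $T(X)$ of matching sort. Define $F(X) = T(X)/\sim$, equipped with the operations induced from $T(X)$; these descend to the quotient precisely because $\sim$ is a congruence. By construction every equation of $E$ holds in $F(X)$, so $F(X)\in\text{Model}(\textbf{Set},E)$, and the unit $\eta_X\colon X\ra U(F(X))$ is the $S$-sorted map $x\mapsto [x]$.

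To establish the adjunction $F\dashv U$, I would verify the universal property. Given a model $N\vDash E$ and an $S$-sorted map $g\colon X\ra U(N)$, extend $g$ along term formation to a $\sigma$-homomorphism $\hat g\colon T(X)\ra N$ by the recursion $\hat g(x)=g(x)$, $\hat g(c)=N(c)$, and $\hat g(f(t_1,\ldots,t_n))=N(f)(\hat g(t_1),\ldots,\hat g(t_n))$; this $\hat g$ is the unique $\sigma$-homomorphism extending $g$. The main obstacle is to show that $\hat g$ factors through the quotient, i.e.\ that $t\sim t'$ implies $\hat g(t)=\hat g(t')$. This reduces to an induction on the generation of $\sim$: the kernel of the homomorphism $\hat g$ is automatically a congruence, so it suffices to check that it contains the generators of $\sim$, and this is exactly the assumption $N\vDash E$, which says that every substitution instance of every equation of $E$ holds in $N$. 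Hence $\sim\subseteq\ker\hat g$, and $\hat g$ descends to the unique model homomorphism $\bar g\colon F(X)\ra N$ with $U(\bar g)\circ\eta_X=g$.

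An alternative I would keep in reserve is the General Adjoint Functor Theorem. The category $\text{Model}(\textbf{Set},E)$ has all small limits, computed sortwise in $\textbf{Set}$ (products and equalizers of models are again models because equations are preserved under limits), and $U$ manifestly preserves these limits. The solution set condition then holds because any $g\colon X\ra U(N)$ factors through the submodel of $N$ generated by the image $g(X)$, whose cardinality is bounded by that of the set of terms over $X$, namely $\max(\aleph_0,|X|,|M|)$; a set of isomorphism representatives of such bounded models furnishes the required solution set.
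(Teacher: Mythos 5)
Your proposal is correct and follows essentially the same route as the paper: form the term algebra on the sorted set, quotient by the congruence generated by (substitution instances of) the equations, and verify the universal property by extending a map to terms recursively and checking that its kernel, being a congruence, contains the generators of $\sim$ exactly because $N\vDash E$. The only cosmetic difference is that the paper generates the congruence from all deduced equations $E\vdash t_1\approx_v t_2$ while you use instances of $E$ alone, but these generate the same congruence, so the arguments coincide; your adjoint-functor-theorem fallback is a valid alternative the paper does not pursue.
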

\begin{proof}
    Let $A = (A_s)_{s\in S}$ be a family of sets. We define the typed set $A\text{-Term}\rightarrow S$ of $A$-terms as follows:
    \begin{itemize}
        \item  $a\colon s\in A\text{-Term}$ for $a\in A_s$.
        \item  $c:s\in A\text{-Term}$ for $c:s\in M$.
        \item $f(t_1,\ldots, t_n)\colon b\in A\text{-Term}$ for $f\colon a_0\cdots a_n\rightarrow b\in M$ and $t_0\colon a_0,\ldots, t_n\colon a_n\in A\text{-Term}$. 
    \end{itemize}
    We define $A$-term model $n$ as follows:
    \begin{itemize}
        \item Set $n(s) = \{t\colon s\in A\text{-Term}\}$.
        \item For $f\colon a_1\cdots a_n\rightarrow b\in M$ we define
        $$
            n(f)\colon n(a_1\cdots a_n)\rightarrow n(b), (t_1,\ldots, t_n)\mapsto f(t_1,\ldots, t_n).
        $$
    \end{itemize}
    We define an $E$-model out of $n$ via a $\sigma$-congruence $\sim$ generated by the pairs of elements $(n_v(t_1)(x),  n_v(t_2)(x))$ for all $E\vdash t_1\approx_v t_2$ and $x\in n_v$. As with the provability relation, $n/\sim$ satisfies the equations in $E$. Furthermore, given any family of functions $f = (A_s\rightarrow p_s)_s$ where $p\vDash E$, $f$ extends uniquely to a $\sigma$-morphism $\overline{f}\colon n\rightarrow p$. Since $p$ satisfies all the equations deduced from $E$, it follows that $\overline{f}$ extends uniquely along the quotient to a $\sigma$-morphism $m\rightarrow p$. This proves the claim.
\end{proof}
Since left adjoints preserve initial objects, the initial $E$-model $m$ consists of the equivalence classes of pure $\sigma$-terms $t$, which are $\sigma$-terms where no variable is expressed.

\begin{definition}[Universal model]
    Let $\sigma$ be a signature with a modelable context structure $R$. Consider the signature extended signature $\Sigma$ of $\sigma$ and $R$, defined as follows:
    \begin{itemize}
        \item The set of $\Sigma$-sorts is $S' = S^*\times S$.
        \item For each $b,b_1,\ldots, b_n, c\in S, a,a^1,\ldots, a^n\in S^*, f\colon a\rightarrow b\in M$ and $\theta\colon [m]\rightarrow [n]$ in $\Delta_R$, we define $\Sigma$–morphism symbols
        \begin{align*}
        \circ_{a^1,\ldots, a^n, b_1,\ldots, b_n, c}
        &\colon (b_1\cdots b_n,c)(a^1,b_1)\cdots (a^n,b_n)\rightarrow (a^1\cdots a^n, c)\\
        id_c&\colon (c,c)\\
        \theta_{*,b,c}&\colon (b_{\theta(1)}\cdots b_{\theta(m)},c)\rightarrow (b_1\cdots b_n,c)\\
        f&\colon (a,b).
        \end{align*}
        \end{itemize}

        The symbols $\circ, id, \theta_{*,a,b}$ and $f\in M$ are called composition symbols, identities symbols, $\Delta$-action symbols and the internalized $\sigma$-symbols, respectively.  We will often suppress the indices of the function symbols. Consider the following theories:
        \begin{itemize}
            \item We define a linear $\Sigma$-theory $\overline{\sigma}$ called the categorization of $\sigma$ defined as follows:
            \begin{align*}
                \circ(\circ (h,g_1,\ldots, g_n), f^1_1,\ldots, f^n_{m_n}) 
                &\approx_{hgf}\circ(h,\circ(g_1,f^1_1\ldots f^1_{m_1}),\ldots, \circ(g_n,f^n_1,\ldots, f^n_{m_n})),\\
                \circ(id_d,h)&\approx_h h,\\
                \circ(h,id_{c_1},\ldots, id_{c_n})&\approx_h h,\\
                (id_{[n]})_{*,c_1\cdots c_n,d}(h) &\approx_h h,\\
                (\phi\theta)_*(h') &\approx_{h'} \phi_*(\theta_*(h')),\\
                \circ(\theta_*(h'),g_1,\ldots, g_n) &\approx_{h'g} (\theta'_{m_1,\ldots, m_n})_*(\circ(h',g_{\theta(1)},\ldots, g_{\theta(m)})),\\
                \circ(h,\theta^1_*(g_1'),\ldots, \theta^n(g_n')) &\approx_{hg'} (\theta^1+\ldots +\theta^n)_*(\circ(h,g'_1,\ldots, g'_n))
            \end{align*}
            for pairwise different variable symbols 
            {\small
            $$
            f^i_j\colon (a^{i,j}, b^i_j), g_i\colon (b^i_1\cdots b^i_{m_i},c_i), g_i'\colon (b^i_{\theta^i(1)},\ldots,b^i_{\theta^i(k_i)},c_i), h\colon (c_1\ldots c_n,d),h'\colon (c_{\theta(1)}\cdots c_{\theta(m)},d)
            $$
            }
            \noindent
            for $a^{i,j}\in S^*$ and $b^i_j,c_i,d\in S$, where $i,n,m_i\in \N$ and $i\leq n$ and $j\leq m_i$ and for functions $\theta\colon[m]\rightarrow [n], \psi\colon [n]\rightarrow [p]$ and $\theta^i\colon [k_i]\rightarrow [m_i],$ for $i\leq n$, in $\Delta_R$. Here $hgf$, $h'g$ and $hg'$ denote the appropriate contexts.

            \item Let $E$ be an $R$-theory. We define the $\Sigma$-theory $\widetilde{E}$ of internalized $E$-theory as the set consisting of equations $N_v(t_1)\approx_{()}N_v(t_2)$ for each $t_1\approx_v t_2\in E$. Since $R$ is a modelable context structure, each word has a terminal context by Theorem \ref{characterizing modelable context-structures}. Choose the terminal contexts $\overline{v}$ for each word $v$. Let $v$ be a context for a term $t\colon b$ and $v = v_1\cdots v_n$ for $v_i\colon a_i\in V, i\leq n$. We can define the pure $\Sigma$-term (has no variables expressed) $N_v(t)\colon (a_1\cdots a_n,b)$ as 
            $$
            N_v(t) = \begin{cases}
                N_{v,(), b}(c),\text{ if $t = c$ is a constant}\\
                N_{v,v_i,b}(id_{a_i}), \text{ if $t = v_i$ for some $i\leq n$}\\
                N_{v,\overline{\tau(t_1)}\cdots\overline{\tau(t_k)},b}(\circ (f,N_{\overline{\tau(t_1)}}(t_1),\ldots N_{\overline{\tau(t_k)}}(t_k))),\text{ if $t = f(t_1,\ldots, t_k)$}
            \end{cases}
            $$
            and $N_{v,w,b}$ is the function symbol $\theta_{*,v,b}$ where $\theta\colon [m]\rightarrow [n]$ is the unique function in $\Delta_R$ where $w = v_{\theta(1)}\cdots v_{\theta(m)}$. 
            We call the theory $\overline{E} = \overline{\sigma}\cup\widetilde{E}$ the multicategorical $E$-theory.
    \end{itemize}
    An $\overline{E}$-model in $\textbf{Set}$ corresponds to a $\Delta_R$-multicategory with $S$ as the set of objects and it is equipped with a choice of an $E$-model. Consider the initial $\overline{E}$–model $C$ in $\textbf{Set}$ and denote by $m$ the $E$-model in $C$.
\end{definition}

\begin{lemma}\label{Functors mapping models}
    Let $\sigma = (S,M,V)$ be a signature with a modelable context structure $R$. Let $T\colon M\rightarrow N$ be a morphism of $\Delta_R$-multicategories. Then $T$ induces a functor 
    $$
    \overline{T}\colon Model(M,\sigma)\rightarrow Model(N,\sigma)
    $$
    from the $\sigma$-models in $M$ to those in $N$. Furthermore, $T(m_v(t)) = \overline{T}(m)_v(t)$ for any term $t$ in an $R$-context $v$. Especially, if $m\vDash_M t_1\approx_v t_2$, then $\overline{T}(m)\vDash_N t_1\approx_v t_2$, and in the case $T$ is faithful, the converse holds.
\end{lemma}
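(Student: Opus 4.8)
The plan is to define the induced functor by pushing models and model morphisms forward along $T$, then to establish the displayed identity $T(m_v(t)) = \overline{T}(m)_v(t)$ by structural induction on $t$, and finally to read off the two satisfaction statements as immediate consequences. Concretely, for a $\sigma$-model $m$ in $M$ I set $\overline{T}(m)(s) = T(m(s))$ on sorts and $\overline{T}(m)(f) = T(m(f))$ on function symbols. This is a legitimate $\sigma$-model in $N$ because a morphism of $\Delta_R$-multicategories sends objects to objects and multimorphisms to multimorphisms respecting arities: for $f\colon a_1\cdots a_n\rightarrow b$ one has $T(m(f))\colon T(m(a_1))\cdots T(m(a_n))\rightarrow T(m(b))$, and $\overline{T}(m)(a)$ agrees with the image word by definition. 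On a model morphism $h\colon m\rightarrow n$ given by $h(s)\colon m(s)\rightarrow n(s)$, I set $\overline{T}(h)(s) = T(h(s))$; applying $T$ to the naturality square $h(b)\circ m(\alpha) = n(\alpha)\circ h(a)$ and using that $T$ preserves composition yields the naturality square for $\overline{T}(h)$, while preservation of identities and composites of model morphisms is inherited from the corresponding properties of $T$ on underlying multimorphisms.

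The crux is a single compatibility: $T$ commutes with the canonical morphisms. If $w = v_{\theta(1)}\cdots v_{\theta(m)}$ with $\theta\in\Delta_R$, then $f*m_{v,w} = \theta_{m_v,b,*}(f)$ by definition, and since $T$ respects the $\Delta$-action one gets $T(f*m_{v,w}) = \theta_{\overline{T}(m_v),T(b),*}(T(f)) = T(f)*\overline{T}(m)_{v,w}$, using that $\overline{T}(m)_v = \overline{T}(m_v)$ as words of objects and that the indexing function $\theta$ is determined purely syntactically by the pair $(v,w)$, hence is the same for both models. With this in hand the induction is routine. For $t = v_i$ one has $m_v(v_i) = id_{m(v_i)}*m_{v,v_i}$, and applying $T$ together with $T(id_{m(v_i)}) = id_{\overline{T}(m)(v_i)}$ recovers $\overline{T}(m)_v(v_i)$; the constant case is identical with $m(c)$ in place of the identity. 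For $t = f(t_1,\ldots,t_k)$ one writes $m_v(t) = (m(f)\circ(m_{w_1}(t_1),\ldots,m_{w_k}(t_k)))*m_{v,w_1\cdots w_k}$, applies $T$, and uses preservation of composition, the canonical-morphism compatibility above, the identity $T(m(f)) = \overline{T}(m)(f)$, and the induction hypothesis $T(m_{w_i}(t_i)) = \overline{T}(m)_{w_i}(t_i)$ to obtain exactly the recursive expression defining $\overline{T}(m)_v(t)$.

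The two satisfaction statements then drop out. If $m\vDash_M t_1\approx_v t_2$, then $m_v(t_1) = m_v(t_2)$, so applying $T$ and the displayed identity gives $\overline{T}(m)_v(t_1) = \overline{T}(m)_v(t_2)$, i.e.\ $\overline{T}(m)\vDash_N t_1\approx_v t_2$. If in addition $T$ is faithful and $\overline{T}(m)\vDash_N t_1\approx_v t_2$, then $T(m_v(t_1)) = T(m_v(t_2))$ with both multimorphisms lying in the single hom-set $M(m_v,m(b))$, so injectivity of $T$ on this hom-set forces $m_v(t_1) = m_v(t_2)$, whence $m\vDash_M t_1\approx_v t_2$.

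I expect no serious obstacle. The only delicate point is the canonical-morphism compatibility, and that is a direct application of the axiom that a morphism of $\Delta_R$-multicategories respects the $\Delta$-action, combined with the observation that the defining function $\theta$ is model-independent; everything else is the bookkeeping of an ordinary structural induction together with the definitional unwinding of $\overline{T}$.
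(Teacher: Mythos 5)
Your proposal is correct and follows essentially the same route as the paper's proof: the same definition of $\overline{T}$ on models and model morphisms, the same key observation that $T$ intertwines the canonical morphisms $m_{v,w}$ because it respects the $\Delta_R$-action (with $\theta$ determined syntactically by $(v,w)$), the same structural induction over constants, variables, and compound terms, and the same one-line deduction of the satisfaction claims with faithfulness giving the converse. Your write-up is if anything slightly more explicit than the paper's, e.g.\ in checking the naturality square for $\overline{T}(h)$ and in noting that $m_v(t_1),m_v(t_2)$ lie in a single hom-set so that faithfulness applies.
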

\begin{proof}
    Let $m$ be a $\sigma$-model. We define $\overline{T}(m)_s = T(m_s)$ for $s\in S$ and $\overline{T}(m)(f) = T(m(a))\rightarrow T(m(b))$ for $f\colon a\rightarrow b\in M$. We set $\overline{T}(\alpha)(s)= T(\alpha(s))\rightarrow T(m(s))\rightarrow T(n(s))$ for a $\sigma$–morphism $\alpha\colon m\rightarrow n$ in $M$. Clearly, $\overline{T}$ satisfies the functoriality laws. Let $t$ be a term in an $R$-context $v$. We show that $T(m_v(t)) = \overline{T}(m)_v(t)$. First notice that $T(m_{v,w}) = \overline{T}(m)_{v,w}$, since $T$ respects the $\Delta$–action on the multicategories $M$ and $N$. Now
    \begin{itemize}
        \item If $t = c$ is a constant, then $T(m_v(c)) = T(m_{v,()}(c)) = \overline{T}(m)_{v,()}(c) = \overline{T}(m)_v(c))$. 
        \item If $t = v_i$ is a variable symbol, then $T(m_v(v_i)) = T(m_{v,v_i}(id)) = \overline{T}(m)_{v,v_i}(id)  = \overline{T}(m)_v(v_i).$
        \item Assume that $t = f(t_1,\ldots, t_n)$. Denote by $v_i$ the terminal context of $t_i$ for $i\leq n$. Now
        \begin{align*}
            T(m_v(f(t_1,\ldots, t_n))) 
            &= T(m_{v,v_1\cdots v_n}((m(f)(m_{v_1}(t_1),\ldots, m_{v_n}(t_n)))))\\
            &= \overline{T}(m)_{v,v_1\cdots v_n}(\overline{T}(m)(f)(\overline{T}(m)_{v_1}(t_1),\ldots, \overline{T}(m)_{v_n}(t_n))\\
            &= \overline{T}(m)_v(f(t_1,\ldots, t_n))
        \end{align*}
    \end{itemize}
    Hence $T(m_v(t)) = \overline{T}(m)_v(t)$. Now if $m$ satisfies an $R$-equation $t_1\approx_v t_2$, then 
    $$
    \overline{T}(m)_v(t_1) = T(m_v(t_1)) = T(m_v(t_2)) = \overline{T}(m)_v(t_2).
    $$
    Thus $\overline{T}(m)$ satisfies the equation $t_1\approx_v t_2$. Similarly with the converse, if $T$ is faithful.
\end{proof}
\begin{theorem}[Completeness of the Universal Model]\label{completeness of um}
    Let $E$ be an $R,\sigma$-theory, where $R$ is a modelable context structure and $\sigma = (S,M,V)$. Denote by $\Sigma$ the multicategorically extended signature of $\sigma$. Let $\overline{\sigma}$ be the categorization theory of $\sigma$. Let $\overline{E}$ be the multicategorical $E$-theory. Let $C$ be the initial $\overline{E}$-model and denote by $m$ the $E$-model in $C$. Then the following claims hold:
    \begin{enumerate}
        \item The pair $(C,m)$ defines the initial object in the category of $\Delta_R$–multicategories with a choice of an $E$-model.
        \item It holds that $E\vdash_R \phi$ if and only if $m\vDash \phi$ for any $R$-equation $\phi$. 
  \end{enumerate}
\end{theorem}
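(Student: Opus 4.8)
The plan is to reduce both claims to the initiality of $C$ among $\overline{E}$-models, using a single coherence identity that links the internalized pure $\Sigma$-term $N_v(t)$ to the semantic multimorphism $n_v(t)$. The identity I would establish is this: for any $\Delta_R$-multicategory $D$ carrying an $E$-model $n$, form the pulled-back $\overline{E}$-model $D_n$ with hom-sets $D_n(a,b)=D(n(a),n(b))$—composition, identities and the $\Delta_R$-action inherited from $D$, and each internalized symbol $f\colon(a,b)$ interpreted as $n(f)$. Then the interpretation of $N_v(t)$ in $D_n$ equals $n_v(t)$. This follows by induction on $t$: the three clauses defining $N_v(t)$ in the universal-model definition mirror the three clauses defining $n_v(t)$ in the Satisfiability definition, and the internalized action symbol $N_{v,w,b}=\theta_{*,v,b}$ is interpreted in $D_n$ exactly as the canonical morphism $m_{v,w}=\theta^{*}$, so the two recursions agree clause by clause.

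For claim (1) I would first note that $D_n$ genuinely is an $\overline{E}$-model: the $\overline{\sigma}$ equations hold because $D$ satisfies the $\Delta_R$-multicategory axioms, and each equation $N_v(t_1)\approx_{()}N_v(t_2)$ of $\widetilde{E}$ becomes, via the coherence identity, the statement $n_v(t_1)=n_v(t_2)$, which holds because $n\vDash E$. Initiality of $C$ then yields a unique $\overline{E}$-model morphism $C\to D_n$; post-composing with the projection $D_n\to D$ (object map $n$, identity on hom-sets) gives a morphism of $\Delta_R$-multicategories $T\colon C\to D$, and Lemma \ref{Functors mapping models} together with the coherence identity shows $\overline{T}(m)=n$. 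Conversely any such $T$ factors uniquely through $D_n$, so uniqueness descends from uniqueness of the $\overline{E}$-model morphism; hence $(C,m)$ is initial.

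For claim (2) the forward implication is immediate from the Soundness Theorem \ref{Soundness theorem} applied to the model $m$ in the $\Delta_R$-multicategory $C$. For the converse, specialize the coherence identity to $D=C$, $n=m$ (so $D_n=C$), which gives $m_v(t)=[N_v(t)]$, the equivalence class in the initial $\overline{E}$-model, and therefore makes $m\vDash t_1\approx_v t_2$ equivalent to $\overline{E}\vdash N_v(t_1)\approx_{()}N_v(t_2)$. It thus suffices to show that $\overline{E}$ is conservative over $E$, i.e. that this $\overline{E}$-provability forces $E\vdash_R t_1\approx_v t_2$.

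To witness conservativity I would build an $\overline{E}$-model $U$ directly from $R$-deduction: fixing a canonical context $v^a$ for each sort word $a=a_1\cdots a_n$, let $U(a,b)$ be the set of $\sigma$-terms $t\colon b$ admitting $v^a$ as an $R$-context, quotiented by $E\vdash_R t\approx_{v^a}t'$, with composition given by substitution, identities the canonical variables, and the $\theta$-action given by renaming. Sending $f$ to $[f(v_1,\ldots,v_k)]$ makes $U$ into an $E$-model-carrying $\Delta_R$-multicategory with $U_u=U$, and since $u_v(t)=[t]$ by construction, the coherence identity gives $[N_v(t)]_U=[t]$; hence internalized terms are identified in $U$ exactly when $E\vdash_R t_1\approx_v t_2$. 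The induced morphism $C\to U$ then carries $[N_v(t_1)]_C=[N_v(t_2)]_C$ to the corresponding equality in $U$, delivering $E\vdash_R t_1\approx_v t_2$. The main obstacle is verifying that $U$ is a $\Delta_R$-multicategory at all: composition, identities and the $\theta$-action must be shown well-defined on $R$-equivalence classes and to satisfy every $\overline{\sigma}$ axiom, most delicately the two action-composition laws involving the coproduct $\theta^1+\cdots+\theta^n$ and the similarity component $\theta'_{k_1,\ldots,k_n}$. This is precisely where the existence of terminal contexts (Theorem \ref{characterizing modelable context-structures}), the stability of context sets under concatenation and similarity (Corollary \ref{Sets of contexts corollary}), and the substitution closure of $R$-deduction are indispensable, and where the bookkeeping of contexts is most delicate.
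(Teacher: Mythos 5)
Your proposal is correct, and for claim (1) it is essentially the paper's argument: your pulled-back model $D_n$ is the paper's full submulticategory of $D$ on the objects $n(s)$, presented in a form that is in fact slightly cleaner when $n$ identifies distinct sorts. For claim (2), however, you take a genuinely different route. The paper stays inside the initial model: it realizes $C$ as a two-stage quotient of the pure $\Sigma$-terms $A$, first by the congruence generated by $\overline{\sigma}$ (yielding $B$, which satisfies the $\Delta_R$-multicategory axioms \emph{by construction}), then by $\widetilde{E}$; it uses the functor $B\to\mathbf{Set}$ onto the term algebra (via Lemmas \ref{Functors mapping models} and \ref{substitution}) to obtain a normal form $[b_v(t)]$ for elements of $B$ --- context--term pairs up to renaming --- and then verifies only that the $R$-provability relation $\sim$ is a $\Sigma$-congruence, so that $C\cong B/\sim$ and truth in $m$ is literally $R$-derivability. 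You instead keep $C$ abstract and manufacture a separating $\overline{E}$-model: the syntactic multicategory $U$ of terms in canonical context modulo $E\vdash_R$, so that the unique morphism $C\to U$ carries $[N_v(t_i)]$ to $[t_i]$ and completeness falls out of initiality; your $U$ is, in effect, a from-scratch construction of the paper's $B/\sim$. What the paper's route buys is that the multicategory axioms come for free and the normal form for $B$ is reused elsewhere (it underlies the linearization remark in the conclusion); what your route buys is that you never need to identify $C$ explicitly or reason about generated congruences --- everything is a universal-property argument --- at the price of verifying every $\overline{\sigma}$ axiom for $U$ by hand, which is the same ``direct verification'' the paper waves at, plus the unitality/associativity/action laws it gets for free. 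Two small points to tighten: your detour through $\overline{E}$-provability is unnecessary (equality $[N_v(t_1)]_C=[N_v(t_2)]_C$ can be pushed along $C\to U$ directly), and in defining composition in $U$ you must rename blocks apart, since for repeated sorts the concatenation $v^{a^1}\cdots v^{a^n}$ of canonical contexts collides; this renaming is exactly where the substitution closure of $\vdash_R$, the membership $\theta\in\Delta_R$ giving $v^aRv^a_\theta$, and Theorem \ref{characterizing modelable context-structures} are used, as your final caveat anticipates.
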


\begin{proof}\hfill
    \begin{enumerate}
        \item Let $n\vDash E$ be a model in $\Delta_R$-multicategory $D$. Consider the full submulticategory $C$ of $D$ defined by objects $n_s$ for $s\in S$. The pair $(C,n)$ defines exactly an $\overline{E}$-model. Since the $\Sigma$-morphisms $C\rightarrow N$ correspond exactly with functors $C\rightarrow N$ respecting the models $m$ and $n$, it follows that $(C,m)$ defines the initial $\Delta_R$-multicategory equipped with a model for $E$. 

        \item 
        The direct holds by soundness. We show the completeness. Notice that the initial $\overline{E}$-model $C$ is the quotient of the pure $\Sigma$-terms $A$ according the theory $\overline{E} = \widetilde{E}\cup\overline{\sigma}$. One attains $C$ up to an isomorphism by first quotienting $A$ with the congruence generated by $\overline{\sigma}$ and then quotienting by the congruence generated by $\overline{E}$. Denote by $B$ the quotient of $A$ according to the theory $\overline{\sigma}$. Denote by $b$ the $\sigma$-model in $B$. Consider the family of relations $\sim = (\sim_s)_{s\in S}$, where
        $$
        \sim_s = \{(b_v(t_1),b_v(t_2))\mid E\vdash_R t_1\approx_v t_2\}\text{ for $s\in S$}
        $$
        Notice that $C$ is isomorphic to $B$ quotiented by the congruence generated by $\sim$. It suffices to show that $\sim$ is a $\Sigma$–congruence on $B$. Let $n$ be the $\sigma$–term algebra. Thus there exists a functor $F\colon B\rightarrow\textbf{Set}$ of multicategories mapping the model $b$ to $n$. Thus for any constant $\Sigma$-term $T\colon (a,b)$, we attain a function $\overline{T} = F(B_{()}(T))\colon n(a)\rightarrow n(b)$. Notice that by Lemmas \ref{Functors mapping models} and \ref{substitution}, $\overline{b_v(t)}(v_1,\ldots, v_n) = n_v(t)(v_1,\ldots, v_n) = t$ for any context $v = v_1\cdots v_n$ for a $\sigma$-term $t$ where $v_i\in V,i\leq n$. Let $T\colon (a_1\cdots a_n,b)$ be a pure $\Sigma$-term. Let $v = v_1\cdots v_n$ be a context where $v_i\colon a_i$ for $i\leq n$. Let $t = \overline{T}(v_1,\ldots, v_n)$. By induction one sees that $v$ is an $R$-context for $t$ and $b_v(t) = [T]$, where $[T]$ is the equivalence class of $T$ in $B(a,b)$. Notice that $[b_v(t)] = [b_{v'}(t')]$ if and only if $v$ and $v'$ have the same length, $v_i\xmapsto{s} v_i', i\leq n$, is a renaming of variables $v$ and $t' = s(t)$. The converse is clear. The direct follows from the fact that $n_v(t) = F(b_v(t)) = F(b_{v'}(t)) = n_{v'}(t)$ by Lemma \ref{Functors mapping models}. Now $t' = n_v(t')(v_1',\ldots, v_n') = n_{v}(t)(v_1',\ldots, v_n') = s(t)$ by Lemma $\ref{substitution}$. Thus $t' = s(t)$. Notice that if $v_i\colon a_i, i\leq n$ and $v = v_1\cdots v_n$ is a context and $T_1,T_2\colon (a_1\cdots a_n,b)$ are pure $\Sigma$-terms, then $[T_1]\sim [T_2]$ is equivalent to $E\vdash_R \overline{T_1}(v_1,\ldots, v_n) \approx_v \overline{T_2}(v_1,\ldots, v_n)$. A direct verification then shows that $\sim$ is a $\Sigma$–congruence on $B$.
    \end{enumerate}
\end{proof}

\section{Conclusion}
We showed a bijective correspondence between context structures $R$ on an infinite set $V$ and structure categories $\Delta$. Consider a multi-sorted signature $\sigma= (S,M,V)$ with a context structure $R$. We developed $R$-deduction for $R$-theories and showed that if $R$ is a modelable context structure, then $E\vdash_R \phi$ if and only if $E\vDash_C \phi$ for all $\Delta_R$-multicategories $C$. The structure categories $\Delta_R$ corresponding to modelable context structures are the following:
\begin{multicols}{2}
    \begin{enumerate}
        \item $\Delta_{\{0,1\}}$ of strictly increasing maps.
        \item $\Delta^{\{0,1\}}$ of injections.
        \item $\Delta_{\{1\}}$ of identities.
        \item $\Delta^{\{1\}}$ of bijections.
        \item $\Delta_\N = \Delta^\N$ of all functions.
        \item $\Delta^{\N_{1}}$ of surjections.
        \item $\Psi_{N_1}$ of left surjections.
        \item $\Psi^{N_1}$ of right surjections.
    \end{enumerate}
\end{multicols}
For the latter six of the eight structure categories $\Delta$, we showed that the cartesian multicategory of sets gives complete semantics for $R$-deduction for $R = R_\Delta$ and showed a counter-example for the first two. In other words, $E\vDash_{\textbf{Set}}\phi$ implies $E\vdash_R \phi$ for cartesian context structure $R$ or a balanced and modelable $R$. This has an immediate corollary called the Multicategorical Meta-Theorem:
$$
E\vDash_{\textbf{Set}} \phi \text{ implies }E\vDash_M\phi
$$
for all $\Delta_R$-multicategories $M$. This Multicategorical Meta-Theorem allows the transportation of results from the cartesian multicategory of sets to any other $\Delta_R$-multicategory.

There is another application of this work which is more proof-theoretic. If $E\cup\{\phi\}$ is an $R$-theory, where $R$ is a balanced and modelable context structure. Then $E\vdash_R \phi$ if and only $E\vdash \phi$. In other words, one can potentially change the search space in the quest of finding a proof for $\phi$ from $E$ without losing all proofs, given that a proof exists. We also showed that for any modelable context structure $R$ and an $R$-theory $E$, the theory $E$ can linearized in a certain sense. We may construct multicategorical signature $\Sigma$ from $\sigma$ and $R$ and construct a linear $\Sigma$-theory $\overline{E}$ that has the property that $\overline{E}\vdash T_1\approx_{()} T_2$ if and only if $E\vdash_R \overline{T_1}_v\approx_v \overline{T_2}_v$ for all pure $\Sigma$–terms $T_1,T_2$ having the same type.

\section*{Acknowledgements}
This research was supported by a grant from the Fonds de la Recherche Scientifique - FNRS.
\bibliography{ref.bib}
\bibliographystyle{abbrv}
\end{document}